\DeclareMathOperator{\Tr}{Tr}
\DeclareMathOperator{\N}{N}
\title{Classes and equivalence of linear sets in $\mathrm{PG}(1,q^n)$}
\author{Bence Csajb\'ok, Giuseppe Marino and Olga Polverino\thanks{\textcolor{black}{The
research  was supported by
Ministry for Education, University and Research of Italy MIUR (Project
PRIN 2012 "Geometrie di Galois e strutture di incidenza") and by the Italian National
Group for Algebraic and Geometric Structures and their Applications (GNSAGA
- INdAM).}}}
\date{}
\newcommand{\cA}{{\mathcal A}}
\newcommand{\cB}{{\mathcal B}}
\newcommand{\cC}{{\mathcal C}}
\newcommand{\cV}{{\mathcal V}}
\newcommand{\cM}{{\mathcal M}}
\newcommand{\cH}{{\mathcal H}}
\newcommand{\cQ}{{\mathcal Q}}
\newcommand{\F}{{\mathbb F}}
\newcommand{\V}{{\mathbb V}}
\newcommand{\la}{\langle}
\newcommand{\ra}{\rangle}
\newcommand{\ZG}{\mathcal{Z}(\mathrm{\Gamma L})}
\newcommand{\G}{\mathrm{\Gamma L}}
\newtheorem{theorem}{Theorem}[section]
\newtheorem{lemma}[theorem]{Lemma}
\newtheorem{corollary}[theorem]{Corollary}
\newtheorem{definition}[theorem]{Definition}
\newtheorem{proposition}[theorem]{Proposition}
\newtheorem{result}[theorem]{Result}
\newtheorem{example}[theorem]{Example}
\newtheorem{remark}[theorem]{Remark}
\DeclareMathOperator{\PG}{{PG}}
\begin{document}
\maketitle

\begin{abstract}
The equivalence problem of $\F_q$-linear sets of rank $n$ of $\PG(1,q^n)$
is investigated, also in terms of the associated variety, projecting configurations, $\F_q$-linear blocking sets of R\'edei type and MRD-codes.
\end{abstract}



\section{Introduction}
\label{sec:Intro}

Linear sets are natural generalizations of subgeometries.
Let $\Lambda=\PG(W,\F_{q^n})\allowbreak=\PG(r-1,q^n)$, where $W$ is a vector space of dimension $r$ over $\F_{q^n}$. A point set $L$ of $\Lambda$ is said to be an \emph{$\F_q$-linear set} of $\Lambda$ of rank $k$ if it is
defined by the non-zero vectors of a $k$-dimensional $\F_q$-vector subspace $U$ of $W$, i.e.
\[L=L_U=\{\la {\bf u} \ra_{\mathbb{F}_{q^n}} \colon {\bf u}\in U\setminus \{{\bf 0} \}\}.\]
The maximum field of linearity of an $\F_q$-linear set $L_U$ is $\F_{q^t}$ if $t$ is the largest integer such that $L_U$ is an $\F_{q^t}$-linear set.
In the recent years, starting from the paper \cite{Lu1999} by Lunardon, linear sets have been used to construct or characterize various objects in finite geometry, such as blocking sets and multiple blocking sets in finite projective spaces, two-intersection sets in finite projective spaces, translation spreads of the Cayley Generalized Hexagon, translation ovoids of polar spaces, semifield flocks and finite semifields. For a survey on linear sets we refer the reader to \cite{OP2010}, see also \cite{Lavrauw}.

One of the most natural questions about linear sets is their equivalence. Two linear sets $L_U$ and $L_V$ of $\PG(r-1,q^n)$ are said to be \emph{$\mathrm{P\Gamma L}$-equivalent} (or simply \emph{equivalent}) if there is an element $\varphi$ in $\mathrm{P\Gamma L}(r,q^n)$ such that $L_U^{\varphi} = L_V$. In the applications it is crucial to have methods to decide whether two linear sets are equivalent or not.
For $f\in \mathrm{\Gamma L}(r,q^n)$ we have $L_{U^{f}}=L_{U}^{\varphi_f}$, where $\varphi_f$ denotes the collineation of $\PG(W,\F_{q^n})$ induced by $f$. It follows that if $U$ and $V$ are $\F_q$-subspaces of $W$ belonging to the same orbit of
$\mathrm{\Gamma L}(r,q^n)$, then $L_U$ and $L_V$ are equivalent.
The above condition is only sufficient but not necessary to obtain equivalent linear sets.
This follows also from the fact that $\F_q$-subspaces of $W$ with different ranks can define the same linear set, for example $\F_q$-linear sets of $\PG(r-1,q^n)$ of rank $k\geq rn-n+1$ are all the same: they coincide with $\PG(r-1,q^n)$.
As it was showed recently in \cite{CSZ2015}, if $r=2$, then there exist $\F_q$-subspaces of $W$ of the same rank $n$ but on different orbits of $\Gamma \mathrm{L}(2,q^n)$ defining the same linear set of $\PG(1,q^n)$.

Suppose that $L_U^{\varphi_f}=L_V$ for some collineation, but there is no $\F_{q^n}$-semilinear map between $U$ and $V$.
Then the $\F_q$-subspaces $U^{f}$ and $V$ define the same linear set, but there is no invertible $\F_{q^n}$-semilinear map between them.
This observation motivates the following definition. An $\F_q$-linear set $L_U$ with maximum field of linearity $\F_q$ is called \emph{simple} if for each $\F_q$-subspace $V$ of $W$ with $\dim_q(U)=\dim_q(V)$, $L_U=L_V$ only if $U$ and $V$ are in the same orbit of $\Gamma \mathrm{L}(W,\F_{q^n})$.
Natural examples of simple linear sets are the subgeometries (cf. \cite[Theorem 2.6]{LV2013} and \cite[Section 25.5]{JWPH3}).
In \cite{BoPo2005} it was proved that $\F_q$-linear sets of rank $n+1$ of $\PG(2,q^n)$ admitting $(q+1)$-secants are simple.
This allowed the authors to translate the question of equivalence to the study of the orbits of the stabilizer of a subgeometry on subspaces and hence to obtain the complete classification of $\F_q$-linear blocking sets in $\PG(2,q^4)$.
Until now, the only known examples of non-simple linear sets are those of pseudoregulus type of $\PG(1,q^n)$ for $n\geq 5$ and $n\neq 6$, see \cite{CSZ2015}.

In this paper we focus on linear sets of rank $n$ of $\PG(1,q^n)$. Such linear sets are related to $\F_q$-linear blocking sets of R\'edei type, MRD-codes of size $q^{2n}$ with minimum rank distance $n-1$ and projections of subgeometries. We first introduce a method which can be used to find non-simple linear sets of rank $n$ of $\PG(1,q^n)$. Let $L_U$ be a linear set of rank $n$ of $\PG(W,\F_{q^n})=\PG(1,q^n)$ and let $\beta$ be a non-degenerate alternating form of $W$. Denote by $\perp$ the orthogonal complement map induced by $\Tr_{q^n/q} \circ \beta$ on $W$ (considered as an $\F_q$-vector space). Then $U$ and $U^\perp$ defines the same linear set (cf. Result \ref{rem:dualsympl}) and if $U$ and $U^{\perp}$ lie on different orbits of $\mathrm{\Gamma L}(W,\F_{q^n})$, then $L_U$ is non-simple.
Using this approach we show that there are non-simple linear sets of rank $n$ of $\PG(1,q^n)$ for $n\geq 5$, not of pseudoregulus type (cf. Proposition \ref{nemsimple}). Contrary to what we expected initially, simple linear sets are harder to find. We prove that the linear set of $\PG(1,q^n)$ defined by the trace function is simple (cf. Theorem \ref{THMtrace}). We also show that linear sets of rank $n$ of $\PG(1,q^n)$ are simple for $n\leq 4$
(cf. Theorem \ref{n4simple}).

Moreover, in $\PG(1,q^n)$ we extend the definition of simple linear sets and introduce the $\ZG$-class and the $\G$-class for linear sets of rank $n$.
In Section \ref{aspects} we point out the meaning of these classes in terms of equivalence of the associated blocking sets, MRD-codes and projecting configurations.


\section{Definitions and preliminary results}
\label{Prelim}
\subsection{Dual linear sets with respect to a symplectic polarity of a line}
\label{subdual}

For $\alpha\in \F_{q^n}$ and a divisor $h$ of $n$ we will denote by $\Tr_{q^n/q^h}(\alpha)$ the trace of $\alpha$ over the subfield $\F_{q^h}$, that is, $\Tr_{q^n/q^h}(\alpha)=\alpha+\alpha^{q^h}+\ldots+\alpha^{q^{n-h}}$.
By $\N_{q^n/q^h}(\alpha)$ we will denote the norm of $\alpha$ over the subfield $\F_{q^h}$, that is,
$\N_{q^n/q^h}(\alpha)=\alpha^{1+q^h+\ldots+q^{n-h}}$. Since in the paper we will use only norms over $\F_q$, the function $\N_{q^n/q}$ will be denoted simply by $\N$.


Starting from a linear set $L_U$ and using a polarity $\tau$ of the space it is always possible to construct another linear set, which is called {\it dual
linear set of $L_U$ with respect to  the polarity} $\tau$ (see \cite{OP2010}). In particular, let $L_U$ be an $\F_q$--linear set of rank $n$ of a line $\PG(W,\F_{q^n})$ and let $\beta: W\times W\longrightarrow \F_{q^n}$ be a non-degenerate reflexive $\F_{q^n}$--sesquilinear form on the 2-dimensional vector space $W$ over $\F_{q^n}$ determining a polarity $\tau$. The map $\Tr_{q^n/q}\circ \beta$ is a
non-degenerate reflexive $\F_q$--sesquilinear form on $W$, when
$W$ is  regarded as a $2n$-dimensional vector space  over
$\F_{q}$. Let $\perp_\beta$ and $\perp'_\beta$ be the orthogonal complement
maps defined by $\beta$ and $\Tr_{q^n/q}\circ \beta$ on the
lattices  of the $\F_{q^n}$-subspaces and $\F_{q}$-subspaces of
$W$, respectively. The dual linear set of $L_U$ with respect to the polarity $\tau$ is the $\F_q$--linear set of rank $n$ of $\PG(W,\F_{q^n})$ defined by the orthogonal complement
$U^{\perp'_\beta}$ and it will be denoted by $L^{^\tau}_U$. Also, up to projectively equivalence, such a linear
set does not depend on $\tau$.

\bigskip
For a point $P=\la {\bf z} \ra_{\F_{q^n}}\in \PG(W,\F_{q^n})$ the \emph{weight} of $P$ with respect to the linear set $L_U$ is $w_{L_U}(P):=\dim_q(\la {\bf z} \ra_{\F_{q^n}} \cap U)$. Note that when $P\in L_U$, then the weight depends on the subspace $U$ and not only on the set of points of $L_U$. It can happen that for two $\F_q$-subspaces $U$ and $V$ of $W$ we have $L_U=L_V$ with $w_{L_U}(P)\neq w_{L_V}(P)$. When we write ``the weight of $P\in L_U$'', then we always mean $w_{L_U}(P)$ and hence when we speak about the weight of a point, we will never omit the subscript.

\begin{result}
\label{rem:dualsympl}
{\rm From \cite[Property 2.6]{OP2010} (with $r=2$, $s=1$ and $t=n$) it can be easily seen that if $L_U$ is an $\F_q$--linear set of rank $n$ of a line $\PG(W,\F_{q^n})$ and $L^{^\tau}_U$ is its dual linear set with respect to a polarity $\tau$, then $w_{L^{\tau}_U}(P^\tau)=w_{L_U}(P)$ for each point $P\in \PG(W,\F_{q^n})$. If $\tau$ is a symplectic polarity of a line $PG(W,\F_{q^n})$, then $P^\tau=P$ and hence $L_U=L_U^\tau=L_{U^{\perp'_\beta}}$.}
\end{result}

\subsection{\texorpdfstring{$\F_q$-linear sets of $\PG(1,q^n)$ of class $r$}{Fq-linear sets of PG(1,qn) of class r}}
\label{subtitle}

In this paper we investigate the equivalence of $\F_q$-linear sets of rank $n$ of the projective line $\PG(W,\F_{q^n})=\PG(1,q^n)$.
As we have seen in the introduction, two $\F_q$-linear sets $L_U$ and $L_V$ of rank $n$ of $\PG(1,q^n)$ are equivalent if there is an element $\varphi_f$ in $\mathrm{P\Gamma L}(2,q^n)$ such that $L_U^{\varphi_f} = L_{U^{f}}=L_V$, where $f\in \mathrm{\Gamma L}(W,\F_{q^n})$ is the semilinear map inducing $\varphi_{f}$. Hence the first step is to determine the $\F_{q}$-vector subspaces of $W$ defining the same linear set.
This motivates the definition of the $\mathcal{Z}(\mathrm{\Gamma L})$-class and $\mathrm{\Gamma L}$-class of a linear set $L_U$ of $\PG(1,q^n)$ (cf. Definitions \ref{ZGL-class} and \ref{GL-class}). The next proposition relies on the characterization of functions over $\F_q$ determining few directions.
It states that the $\F_q$-rank of $L_U$ of $\PG(1,q^n)$ is uniquely defined when the maximum field of linearity of $L_U$ is $\F_q$.
This will allow us to state our definitions and results without further conditions on the rank of the corresponding $\F_q$-subspaces.

\begin{proposition}
\label{rankisgood}
Let $L_U$ be an $\F_q$-linear set of $\PG(W,\F_{q^n})=\PG(1,q^n)$ of rank $n$.
The maximum field of linearity of $L_U$ is $\F_{q^d}$, where
\[d=\min\{ w_{L_U}(P) \colon P\in L_U\}.\]
If the maximum field of linearity of $L_U$ is $\F_q$, then the rank of $L_U$ as an $\F_q$-linear set is uniquely defined, i.e. for each $\F_q$-subspace $V$ of $W$ if $L_U=L_V$, then $\dim_{q}(V)=n$.
\end{proposition}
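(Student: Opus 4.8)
The plan is to reduce both assertions to the classical theory of functions over $\F_q$ determining few directions, via the standard dictionary between linear sets of rank $n$ on $\PG(1,q^n)$ and graphs of $\F_q$-linear maps. First I would choose coordinates on $\PG(W,\F_{q^n})=\PG(1,q^n)$ so that the point $\la(0,1)\ra_{\F_{q^n}}$ is \emph{not} in $L_U$ (if $L_U$ is the whole line the statement about the field of linearity is trivial and the rank is forced to be large, so we may assume $L_U\neq\PG(1,q^n)$; then some point has weight $0$ and can be moved to $\la(0,1)\ra$ by an element of $\GL(2,q^n)$). After this normalization $U$ meets $\la(0,1)\ra_{\F_{q^n}}$ trivially, so $U$ is the graph $U_f=\{(x,f(x))\colon x\in\F_{q^n}\}$ of an $\F_q$-linear map $f\colon\F_{q^n}\to\F_{q^n}$ (here we use $\dim_q U=n$). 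Then $L_U=\{\la(x,f(x))\ra\colon x\in\F_{q^n}^*\}=\{\la(1,f(x)/x)\ra\colon x\neq 0\}$, i.e.\ the points of $L_U$ correspond exactly to the \emph{directions} determined by the graph of $f$, the point at infinity $\la(0,1)\ra$ being excluded.

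Next I would compute the weight of a point $P=\la(1,m)\ra$ of $L_U$: $w_{L_U}(P)=\dim_q(\{x\colon f(x)=mx\})$, the $\F_q$-dimension of the kernel of the $\F_q$-linear map $x\mapsto f(x)-mx$. So $d:=\min_{P\in L_U} w_{L_U}(P)$ is the minimum, over directions $m$ actually determined, of the dimension of these kernels. The key input is the structure theorem for $\F_q$-linear maps determining few directions (Ball--Blokhuis--Brouwer--Storme--Sz\H{o}nyi, and its $\F_q$-linear refinement): if the graph of an $\F_q$-linear $f\colon\F_{q^n}\to\F_{q^n}$ determines $N$ directions with $N<(q^n+q)/(q+1)\cdot$ (appropriate bound), then all the nonzero ``slopes'' $\{x\colon f(x)=mx\}$ have a common subfield structure; more precisely each such kernel, together with $\{0\}$, is an $\F_{q^e}$-subspace for a fixed $e$, and conversely $L_U$ is then $\F_{q^e}$-linear. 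From this one extracts: $d$ equals the largest $e$ such that every nonempty fibre $f^{-1}(mx)$ over a determined direction is an $\F_{q^e}$-subspace, and this $e$ is exactly the maximum field of linearity exponent. I would cite the relevant ``few directions'' result as already available in the literature and quote it in the form that gives the $\F_{q^d}$-linearity of $L_U$ with $d$ as defined; the main work is checking that the bound ``rank $n$ on a line'' always puts us in the regime where the structure theorem applies, and that the minimal-weight value coincides with the field-of-linearity exponent rather than merely dividing it.

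For the second assertion, suppose the maximum field of linearity is $\F_q$, i.e.\ $d=1$, and let $V$ be an $\F_q$-subspace with $L_V=L_U$. Set $k=\dim_q V$. Since the set of points is the same, for every point $P$ we have $w_{L_V}(P)\ge 1$ exactly when $P\in L_U$, and $\sum_{P\in L_U}(q^{w_{L_V}(P)}-1)=q^k-1$ while $\sum_{P\in L_U}(q^{w_{L_U}(P)}-1)=q^n-1$; also each $L_U$-point has $w_{L_V}(P)\le n$ (rank bounded by dimension of the ambient) and $w_{L_V}(P)\ge 1$. If $k<n$ I would apply the first part of the proposition \emph{to} $L_V$: its maximum field of linearity is $\F_{q^{d'}}$ with $d'=\min_P w_{L_V}(P)\ge 1$, and the same structure theorem forces $L_V$ — hence $L_U$ — to be $\F_{q^{d'}}$-linear, so $d'=1$, meaning $L_V$ also has a weight-one point; then a counting/divisibility argument on $\sum(q^{w_{L_V}(P)}-1)=q^k-1$ together with $|L_U|$ (which is already pinned down by $L_U$ having rank $n$ and field of linearity $\F_q$) yields $k\ge n$, and the reverse inequality $k\le n$ comes from the fact that a rank-$>n$ linear set on a line with a weight-one point cannot have the same point set as a rank-$n$ one — concretely, again via the few-directions dichotomy, a graph of an $\F_q$-linear map $\F_{q^k}\to\F_{q^k}$-type object determining ``few'' directions is forced to be small, contradicting $|L_U|$. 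The cleanest route is: rank $k$ with a point of weight $1$ forces $|L_V|\ge q^{k-1}+1$ by the trivial lower bound and $|L_V|\le (q^k-1)/(q-1)$ trivially; matching these with $|L_U|$, which satisfies $q^{n-1}+1\le |L_U|\le (q^n-1)/(q-1)$ and is \emph{not} of the special small form (because field of linearity is exactly $\F_q$), pins $k=n$. I expect the main obstacle to be precisely this last point: ruling out $k>n$ cleanly. The honest fix is to invoke the characterization of minimum-size linear sets / the few-directions bound to show that the only way $L_V$ with $k>n$ can have point set equal to a rank-$n$ linear set is if that set is $\PG(1,q^n)$ or is $\F_{q^t}$-linear for some $t>1$ dividing things — both excluded by hypothesis — so no such $V$ exists and $\dim_q V=n$.
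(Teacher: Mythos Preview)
Your approach is essentially the same as the paper's: normalize so that $\la(0,1)\ra_{\F_{q^n}}\notin L_U$, write $U=U_f$ for an $\F_q$-linear $f$, identify the points of $L_U$ with the directions determined by the graph of $f$, and invoke the Ball--Blokhuis--Brouwer--Storme--Sz\H{o}nyi/Ball structure theorem. Two concrete remarks on execution.

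First, you misidentify the hard direction in the second assertion. Ruling out $k:=\dim_q V>n$ is a one-line dimension argument, not an obstacle: $W$ has $\F_q$-dimension $2n$, each point of $\PG(1,q^n)$ corresponds to an $n$-dimensional $\F_q$-subspace of $W$, and any $\F_q$-subspace $V$ of dimension greater than $n$ meets every such $n$-dimensional subspace nontrivially; hence $L_V=\PG(1,q^n)$, which is excluded since $|L_U|\le(q^n-1)/(q-1)<q^n+1$. No few-directions machinery is needed here. For $k\le n-1$ the paper uses exactly the size comparison you sketch: the direction bound with $d=1$ gives $|L_U|\ge q^{n-1}+1$, while trivially $|L_V|\le(q^{n-1}-1)/(q-1)<q^{n-1}+1$, a contradiction. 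Your detour through applying the first part of the proposition to $L_V$ is unnecessary (and, as stated, circular, since the first part is proved only for rank $n$).

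Second, on the first assertion you correctly flag that the structure theorem only immediately yields that $L_U$ is $\F_{q^d}$-linear, not that $\F_{q^d}$ is the \emph{maximum} field of linearity. The paper closes this gap by a further size comparison: if $L_U$ were $\F_{q^r}$-linear of some rank $z$ with $r>d$, then $rz\le n$ (else $L_U=\PG(1,q^n)$) and hence $|L_U|\le(q^{rz}-1)/(q^r-1)\le(q^n-1)/(q^r-1)$; one then checks the elementary inequality $(q^n-1)/(q^r-1)<q^{n-d}+1$ for $r\ge d+1$, contradicting the lower bound $|L_U|\ge q^{n-d}+1$ from the direction theorem. Your proposal acknowledges this step is needed but does not carry it out; this is the actual substantive verification, not the $k>n$ case.
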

\begin{proof}
First assume that $\la(0,1)\ra_{\F_{q^n}}\notin L_U$, i.e. $U=\{(x,f(x)) \colon x\in \F_{q^n}\}$ for some $q$-polynomial $f$ over $\F_{q^n}$.

Consider the following map, $U \rightarrow \PG(2,q^n) \colon (x,f(x))\mapsto \la(x,f(x),1)\ra_{\F_{q^n}}$.
We will call this $q$-set of $\PG(2,q^n)$ the graph of $f$ and we will denote it by $G_f$. Let $X_0$, $X_1$, $X_2$ denote the coordinate functions in $\PG(2,q^n)$ and consider the line $X_2=0$ as the line at infinity, denoted by $\ell_{\infty}$. The points of $\ell_\infty$ are called directions, denoted by $(m):=\la(1,m,0)\ra_{\F_{q^n}}$ and by $(\infty):=\la(0,1,0)\ra_{\F_{q^n}}$.
The set of directions determined by $f$ is
\[D_f:=\left\{  \left(\frac{f(x)-f(y)}{x-y}\right) \colon x,y\in \F_{q^n},\,x\neq y\right\}=
\left\{ \left( \frac{f(z)}{z} \right) \colon z\in \F_{q^n}^*\right\}.\]
It follows that $\la(x,f(x))\ra_{q^n}\mapsto\la(x,f(x),0)\ra_{\F_{q^n}}$ is a bijection between
the point set of  $L_U$ and the set of directions determined by $f$.
The point $P_m:=\la(1,m)\ra_{\F_{q^n}}$ is mapped to the direction $(m)$.

For each line $\ell$ through $(m)$ if $\ell$ meets the graph of $f$, then it meets it in $q^t$ points, where
$t=w_{L_{U}}(P_m)$. Indeed, suppose that $\ell$ meets the graph of $f$ in $\la(x_0,f(x_0),1)\ra_{\F_{q^n}}$.
To obtain the number of the other points of $\ell \cap G_f$ we have to count
\[\left|\left\{ x \in \F_{q^n}\setminus \{x_0\} \colon \frac{f(x)-f(x_0)}{x-x_0}=m  \right\}\right|=
\left|\left\{ z \in \F_{q^n}^* \colon \frac{f(z)}{z}=m  \right\}\right|,\]
which is $q^t-1$.

Let $d=\min\{ w_{L_U}(P) \colon P\in L_U\}$. If $q=p^e$, $p$ prime, then $p^{de}$ is the largest $p$-power such that every line meets the graph of $f$ in a multiple of $p^{de}$ points. Then a result on the number of direction determined by functions over $\F_q$ due to Ball, Blokhuis, Brouwer, Storme and Sz\H onyi \cite{BBBSSz}, and Ball \cite{B2003} yields that either $d=n$ and $f(x)=\lambda x$ for some $\lambda\in \F_{q^n}$, or $\F_{q^d}$ is a subfield of $\F_{q^n}$ and
\begin{equation}
\label{dirp}
q^{n-d}+1 \leq |D_f| \leq  \frac{q^n-1}{q^d-1}.
\end{equation}
Moreover, if $q^d>2$, then $f$ is $\F_{q^d}$-linear.
In our case we already know that $f$ is $\F_q$-linear, so even in the case $q^d=2$ it follows that
$U$ is an $\F_{q^d}$-subspace of $W$ and hence $L_U$ is an $\F_{q^d}$-linear set. We show that $\F_{q^d}$ is the maximum field of linearity of $L_U$. Suppose, contrary to our claim, that $L_U$ is $\F_{q^r}$-linear of rank $z$ for some $r>d$. Then $L_U$ is also $\F_q$-linear of rank $rz$. It follows that $rz\leq n$ since otherwise $L_U=\PG(1,q^n)$. Then for the size of $L_U$ we get $|L_U|\leq (q^{rz}-1)/(q^r-1)\leq (q^n-1)/(q^r-1)$.
To get a contradiction, we show that this is less than $q^{n-d}+1$, which is the lower bound obtain for $|L_U|$ in \eqref{dirp}. After rearranging we get
\[\frac{q^n-1}{q^r-1}< q^{n-d}+1 \Leftrightarrow q^{n-d}(q^d+1)< (q^{n-d}+1)q^r.\]
The latter inequality always holds because of $r\geq d+1$. This contradiction shows $r=n$.

Now suppose that $\F_q$ is the maximum field of linearity of $L_U$ and let $V$ be an $r$-dimensional $\F_q$-subspace of $W$ such that $L_U=L_V$. We cannot have $r>n$ since $L_U\neq \PG(1,q^n)$. Suppose, contrary to our claim, that
$r\leq n-1$. Then $|L_U|\leq (q^{n-1}-1)/(q-1)$ contradicting \eqref{dirp} which gives $q^{n-1}+1 \leq |L_U|$.

Now suppose that $\la ( 0,1 ) \ra_{\F_{q^n}}\in L_U$. After a suitable projectivity $\varphi_f$ we have $\la(0,1)\ra_{\F_{q^n}}\notin L_{U^f}$.
Of course the maximum field of linearity of $L_U$ and $L_{U^f}$ coincide and for each point $P$ of $L_{U}$ we have $w_{L_U}(P)=w_{L_{U^f}}(P^{\varphi_f})$.
Hence the first part of the theorem follows. The second part also follows easily since $L_U=L_V$ with $\dim_q(U)\neq \dim_q(V)$ would yield
$L_{U^f}=L_{V^f}$ with $\dim_{q}(U^f)\neq \dim_q(V^f)$, a contradiction.
\end{proof}

Now we can give the following definitions of classes of an $\F_q$-linear set of a line.

\begin{definition}
\label{ZGL-class}
Let $L_U$ be an $\F_q$-linear set of $\PG(W,\F_{q^n})=\PG(1,q^n)$ of rank $n$ with maximum field of linearity $\F_q$.
We say that $L_U$ is of $\ZG$-class $r$ if $r$ is the largest integer such that there exist
$\F_q$-subspaces $U_1,U_2,\ldots,U_r$ of $W$ with $L_{U_i}=L_U$ for $i\in\{1,2,\ldots, r\}$ and
$U_i\neq \lambda U_j$ for each $\lambda\in \F_{q^n}^*$ and for each $i\neq j$, $i,j\in\{1,2,\ldots,r\}$.
\end{definition}

\begin{definition}
\label{GL-class}
Let $L_U$ be an $\F_q$-linear set of $\PG(W,\F_{q^n})=\PG(1,q^n)$ of rank $n$ with maximum field of linearity $\F_q$.
We say that $L_U$ is of $\G$-class $s$ if $s$ is the largest integer such that there exist $\F_q$-subspaces $U_1,U_2,\ldots,U_s$ of $W$ with $L_{U_i}=L_U$ for $i\in\{1,2,\ldots, s\}$ and there is no $f\in \mathrm{\Gamma L}(2,q^n)$ such that
$U_i=U_j^f$ for each $i\neq j$, $i,j\in\{1,2,\ldots,s\}$.
\end{definition}

Simple linear sets (cf. Section \ref{sec:Intro}) of $\PG(1,q^n)$ are exactly those of $\G$-class one.
The next propositions are easy to show.

\begin{proposition}
\label{invar0}
Let $L_U$ be an $\F_q$-linear set of $\PG(W,\F_{q^n})=\PG(1,q^n)$ of rank $n$ with maximum field of linearity $\F_q$ and let $P$ be a point of $\PG(1,q^n)$.
Then for each $f\in \Gamma \mathrm{L}(2,q^n)$ we have $w_{L_U}(P)=w_{L_{U^f}}(P^{\varphi_f})$.\qed
\end{proposition}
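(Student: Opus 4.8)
The plan is to unwind the definition of weight and use the fact that an invertible semilinear map preserves the $\F_q$-dimension of $\F_q$-subspaces. Write $f\in\Gamma\mathrm{L}(2,q^n)$ with companion automorphism $\sigma\in\Gal(\F_{q^n}/\F_p)$, so that $f$ is additive and $f(\lambda\mathbf{v})=\lambda^{\sigma}f(\mathbf{v})$ for all $\lambda\in\F_{q^n}$ and $\mathbf{v}\in W$. Fix $P=\la\mathbf{z}\ra_{\F_{q^n}}\in\PG(1,q^n)$. By definition $w_{L_U}(P)=\dim_q(\la\mathbf{z}\ra_{\F_{q^n}}\cap U)$, while $P^{\varphi_f}=\la f(\mathbf{z})\ra_{\F_{q^n}}$, so $w_{L_{U^f}}(P^{\varphi_f})=\dim_q(\la f(\mathbf{z})\ra_{\F_{q^n}}\cap U^f)$; it therefore suffices to compare these two $\F_q$-dimensions.

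First I would record that $\sigma$ stabilizes $\F_q$ setwise, since $\F_q$ is the unique subfield of $\F_{q^n}$ of order $q$ and hence is characteristic. Consequently the restriction of $f$ to any $\F_q$-subspace $T$ of $W$ is an additive, injective, $\sigma|_{\F_q}$-semilinear map of $\F_q$-vector spaces, so $f(T)$ is an $\F_q$-subspace with $\dim_q f(T)=\dim_q T$.

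Next, since $f$ is a bijection of $W$ which is $\F_{q^n}$-semilinear, it sends the $1$-dimensional $\F_{q^n}$-subspace $\la\mathbf{z}\ra_{\F_{q^n}}$ onto $\la f(\mathbf{z})\ra_{\F_{q^n}}$ and commutes with intersections; hence
\[f\bigl(\la\mathbf{z}\ra_{\F_{q^n}}\cap U\bigr)=\la f(\mathbf{z})\ra_{\F_{q^n}}\cap U^{f}.\]
Applying the dimension remark to $T=\la\mathbf{z}\ra_{\F_{q^n}}\cap U$ gives $\dim_q(\la\mathbf{z}\ra_{\F_{q^n}}\cap U)=\dim_q(\la f(\mathbf{z})\ra_{\F_{q^n}}\cap U^{f})$, that is $w_{L_U}(P)=w_{L_{U^f}}(P^{\varphi_f})$.

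There is essentially no obstacle here; the only point that deserves a word is that the companion automorphism of $f$ stabilizes $\F_q$, which is what makes $f$ dimension-preserving on $\F_q$-subspaces. The assumption that the maximum field of linearity of $L_U$ is $\F_q$ is not needed for the displayed equality as such; it is kept only so that, via Proposition \ref{rankisgood}, the rank of $L_U$ is unambiguously $n$.
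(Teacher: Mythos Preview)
Your proof is correct. The paper does not actually give a proof of this proposition: it simply marks it with \qed\ after remarking that it is ``easy to show,'' so there is nothing to compare against beyond noting that your argument is exactly the kind of straightforward unwinding of the definition of weight that the authors had in mind.
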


\begin{proposition}
\label{invar}
Let $L_U$ be an $\F_q$-linear set of $\PG(W,\F_{q^n})=\PG(1,q^n)$ of rank $n$ with maximum field of linearity $\F_q$ and let
$\varphi$ be a collineation of $\PG(W,\F_{q^n})$.
Then $L_U$ and $L_U^{\varphi}$ have the same $\ZG$-class and $\G$-class. \qed
\end{proposition}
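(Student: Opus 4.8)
The plan is to note that every collineation $\varphi$ of $\PG(W,\F_{q^n})$ is induced by some semilinear map $f\in\G(2,q^n)$, say $\varphi=\varphi_f$, so that $L_U^\varphi=L_{U^f}$. The key point will then be that the bijection $X\mapsto X^f$ on the set of $\F_q$-subspaces of $W$ sends a family of subspaces witnessing the $\ZG$-class (respectively, the $\G$-class) of $L_U$ to a family witnessing at least the same class for $L_U^\varphi$; applying the same argument to $\varphi^{-1}$, which is again a collineation, will give the reverse inequality and hence equality.

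First I would check that the two classes are even defined for $L_U^\varphi$, i.e.\ that $U^f$ is an $\F_q$-subspace of rank $n$ and that $L_{U^f}$ has maximum field of linearity $\F_q$. The former is clear since $f$ maps $\F_q$-subspaces to $\F_q$-subspaces of the same $\F_q$-dimension. For the latter, Proposition~\ref{invar0} gives $w_{L_{U^f}}(Q)=w_{L_U}(Q^{\varphi_f^{-1}})$ for every point $Q$, and since $\varphi_f$ permutes the points of $\PG(1,q^n)$ we get $\min\{w_{L_{U^f}}(Q)\colon Q\in L_{U^f}\}=\min\{w_{L_U}(P)\colon P\in L_U\}=1$; Proposition~\ref{rankisgood} then shows that $\F_q$ is the maximum field of linearity of $L_{U^f}=L_U^\varphi$.

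The heart of the argument is to record three properties of $X\mapsto X^f$, valid for all $\F_q$-subspaces $X,Y$ of $W$: (a) $L_X=L_U$ if and only if $L_{X^f}=L_U^\varphi$, which follows from $L_{X^f}=L_X^{\varphi_f}$ together with the fact that $Z\mapsto Z^{f^{-1}}$ inverts $X\mapsto X^f$; (b) $X=\lambda Y$ for some $\lambda\in\F_{q^n}^*$ if and only if $X^f=\mu Y^f$ for some $\mu\in\F_{q^n}^*$; (c) $X$ and $Y$ lie in the same $\G(2,q^n)$-orbit if and only if $X^f$ and $Y^f$ do, which holds because $\G(2,q^n)$ is a group containing $f$. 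Granting these, if $U_1,\dots,U_r$ realize the $\ZG$-class $r$ of $L_U$ then $U_1^f,\dots,U_r^f$ all define $L_U^\varphi$ by (a) and are pairwise non-proportional by (b), so the $\ZG$-class of $L_U^\varphi$ is at least $r$; the reverse inequality comes from applying this to $\varphi^{-1}$. Replacing (b) by (c) yields the same conclusion for the $\G$-class. The only step that is not purely formal is (b): there one must carry the companion automorphism $\sigma$ of $f$ through the computation, using $(\lambda Y)^f=\lambda^\sigma Y^f$, so that $X=\lambda Y$ forces $X^f=\lambda^\sigma Y^f$ while $X^f=\mu Y^f$ forces $X=\mu^{\sigma^{-1}}Y$. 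This twisting of scalars by a possibly non-trivial $\sigma$ is the one place where care is needed; I do not expect a genuine obstacle.
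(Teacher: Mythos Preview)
Your argument is correct; the paper itself omits the proof entirely (the statement is followed immediately by \qed, with the preceding remark that ``the next propositions are easy to show''), so there is no alternative approach to compare against. What you have written is exactly the routine verification the authors leave to the reader, including the one nontrivial detail you flag---that for property~(b) the scalar must be twisted by the companion automorphism~$\sigma$ of~$f$.
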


\begin{remark}
Let $L_U$ be an $\F_q$-linear set of rank $n$ of $\PG(1,q^n)$ with $\G$-class $s$ and let $U_1, U_2, \ldots, U_s$ be $\F_q$-subspaces belonging to different orbits of $\Gamma \mathrm{L}(2,q^n)$ and defining $L_U$.
The $\mathrm{P\Gamma L}(2,q^n)$-orbit of $L_U$ is the set
\[
\bigcup_{i=1}^s \{ L_{U_i^f} \colon f \in \mathrm{\Gamma L}(2,q^n)\}.
\]
\end{remark}

\section{\texorpdfstring{Examples of simple and non-simple linear sets of $\PG(1,q^n)$}{Examples of simple and non-simple linear sets of PG(1,qn)}}

Let  $\V=\mathbb F_{q^n} \times \mathbb F_{q^n}$ and let $L_U$ be an $\F_q$--linear set of rank $n$ of $\PG(1,q^n)=\PG(\V,\F_{q^n})$. We can always assume (up to a projectivity) that $L_U$ does not contain the point $\la(0,1)\ra_{\F_{q^n}}$. Then $U=U_f=\{(x,f(x)) \colon x\in \F_{q^n}\}$, for some $q$-polynomial $f(x)=\sum_{i=0}^{n-1} a_i x^{q^i}$ over $\F_{q^n}$. For the sake of simplicity we will write $L_f$ instead of $L_{U_f}$ to denote the linear set defined by $U_f$.

According to Result \ref{rem:dualsympl} and using the same notations as in Section \ref{subdual} if $L_U$ is an $\F_q$-linear set of rank $n$ of $\PG(1,q^n)$ and $\tau$ is a symplectic polarity,
then $U^{\perp'_\beta}$ defines the same linear set as $U$. Since in general $U^{\perp'_\beta}$ and $U$ are not equivalent under the action of the group $\mathrm{\Gamma L}(2,q^n)$, simple linear sets of a line are harder to find.

Consider the non-degenerate symmetric bilinear form of $\mathbb F_{q^n}$
over $\F_q$ defined by the following rule
\begin{equation}\label{form:angle}
<x,y>:=\Tr_{q^n/q}(xy).
\end{equation}

\noindent Then the {\it adjoint map} $\hat f$ of an $\F_q$-linear map $f(x)=\sum_{i=0}^{n-1}a_i x^{q^i}$ of $\F_{q^n}$ (with respect to the bilinear form $\langle,\rangle$) is
\begin{equation}
\hat f(x):=\sum_{i=0}^{n-1}a_i^{q^{n-i}} x^{q^{n-i}}.
\end{equation}

Let $\eta: \V \times \V \longrightarrow \F_{q^{n}}$ be the  non-degenerate alternating bilinear  form of
$\V$ defined by $\eta((x,y), (u,v))=xv-yu$. Then $\eta$ induces a symplectic polarity on the line $\PG(\V,\F_{q^n})$ and
\begin{equation}\label{form:perp1}
\eta'((x,y), (u,v))=\Tr_{q^n/q}(\eta((x,y), (u,v)))\end{equation}
is a non-degenerate alternating bilinear form on $\V$, when $\V$ is regarded as a $2n$-dimensional vector space over $\F_{q}$.
We will always denote in the paper by $\perp$ and $\perp'$ the orthogonal complement maps defined by $\eta$ and $\eta'$ on the lattices of
the $\F_{q^n}$-subspaces and the $\F_{q}$-subspaces of $\V$, respectively. Direct calculation shows
that \begin{equation}\label{form:hatf} U_{f}^{\perp '}=U_{\hat f}.\end{equation}

Result \ref{rem:dualsympl} and (\ref{form:hatf}) allow us to slightly reformulate \cite[Lemma 2.6]{BGMP2015}.

\begin{lemma}[{\cite{BGMP2015}}]
\label{lem2} Let $L_f=\{\langle(x,f(x))\rangle_{\F_{q^n}} \colon x\in \F_{q^n}^*\}$ be an $\F_q$--linear set of $\PG(1,q^n)$ of rank $n$, with $f(x)$ a $q$-polynomial over $\F_{q^n}$, and let $\hat f$ be the adjoint of $f$ with respect to the bilinear form (\ref{form:angle}). Then for each point $P\in \PG(1,q^n)$ we have $w_{L_{f}}(P)=w_{L_{\hat f}}(P)$. In particular,
$L_{f}=L_{\hat f}$ and the maps defined by $f(x)/x$ and $\hat f(x)/x$  have
the same image.
\end{lemma}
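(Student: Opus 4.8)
The plan is to deduce Lemma \ref{lem2} directly from the two preparatory facts assembled just above it: Result \ref{rem:dualsympl}, which says that for a symplectic polarity $\tau$ one has $L_U = L_U^\tau = L_{U^{\perp'_\beta}}$ with the weight relation $w_{L_U^\tau}(P^\tau)=w_{L_U}(P)$, and equation (\ref{form:hatf}), which identifies $U_f^{\perp'}=U_{\hat f}$. First I would take $\beta=\eta$ as the concrete non-degenerate alternating form, so that $\perp'_\beta$ is exactly the map $\perp'$ from (\ref{form:perp1}), and $\tau$ is the induced symplectic polarity on $\PG(\V,\F_{q^n})$. The crucial point is that a symplectic polarity on a projective \emph{line} fixes every point: $P^\tau = P$ for all $P\in\PG(1,q^n)$. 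Feeding $U=U_f$ into Result \ref{rem:dualsympl} then gives $w_{L_{U_f^{\perp'}}}(P) = w_{L_{U_f^{\perp'}}}(P^\tau) = w_{L_{U_f}}(P)$ for every point $P$, and by (\ref{form:hatf}) the left-hand side is $w_{L_{\hat f}}(P)$. This yields $w_{L_f}(P)=w_{L_{\hat f}}(P)$ for all $P$.

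From the pointwise equality of weights the remaining assertions follow immediately. Since $L_f$ (resp.\ $L_{\hat f}$) as a point set is precisely the set of points of positive weight with respect to $U_f$ (resp.\ $U_{\hat f}$), equality of all weights forces $L_f = L_{\hat f}$ as point sets; alternatively this is already contained in Result \ref{rem:dualsympl} via $L_{U_f}=L_{U_f^{\perp'}}=L_{U_{\hat f}}$. For the statement about images, I would use the bijection, recorded in the proof of Proposition \ref{rankisgood}, between the points of $L_g$ and the directions $(f(z)/z)$, $z\in\F_{q^n}^*$; concretely, $\langle(1,m)\rangle_{\F_{q^n}}\in L_g$ precisely when $m$ lies in the image of $g(x)/x$, and the point $\langle(0,1)\rangle_{\F_{q^n}}$ is not in either linear set (both $U_f$ and $U_{\hat f}$ are graphs of $q$-polynomials, since $\hat f$ is again a $q$-polynomial by the displayed formula for the adjoint). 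Hence $L_f=L_{\hat f}$ translates into: $m\mapsto$ "$m$ is a value of $f(x)/x$" and "$m$ is a value of $\hat f(x)/x$" coincide, i.e.\ $f(x)/x$ and $\hat f(x)/x$ have the same image.

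There is essentially no hard step here: the lemma is a repackaging of \cite[Lemma 2.6]{BGMP2015} in the language set up in Section \ref{subdual}, and the only thing one must be careful about is the bookkeeping — checking that the form $\eta$ in (\ref{form:perp1}) is indeed a legitimate choice of non-degenerate alternating $\F_{q^n}$-bilinear form whose induced $\F_q$-form is $\eta'=\Tr_{q^n/q}\circ\eta$, that $U_f^{\perp'}=U_{\hat f}$ (the "direct calculation" already asserted before the lemma, which amounts to unwinding $\Tr_{q^n/q}(xv-yu)=0$ for $(x,y)\in U_f$ and using the trace-adjoint identity $\Tr_{q^n/q}(y\,g(x))=\Tr_{q^n/q}(x\,\hat g(y))$), and that on a line every symplectic polarity is the identity on points. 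The mild subtlety worth a sentence is the last point: in dimension $2$ an alternating form is unique up to a scalar, so the polarity it induces sends $\langle\mathbf z\rangle$ to $\langle\mathbf z\rangle^{\perp_\beta}$, which in a $2$-dimensional space over $\F_{q^n}$ is again $\langle\mathbf z\rangle$ because $\beta(\mathbf z,\mathbf z)=0$; this is exactly why the weight relation of Result \ref{rem:dualsympl} specializes to $w_{L_f}(P)=w_{L_{\hat f}}(P)$ with the \emph{same} $P$ on both sides, which is the content of the lemma.
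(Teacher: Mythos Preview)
Your proposal is correct and follows exactly the approach the paper itself indicates: the sentence preceding the lemma states that Result~\ref{rem:dualsympl} together with (\ref{form:hatf}) ``allow us to slightly reformulate \cite[Lemma 2.6]{BGMP2015},'' and the lemma is then stated without further proof. Your write-up simply spells out that deduction (symplectic polarity fixes every point of a line, hence $w_{L_{U_f}}(P)=w_{L_{U_f^{\perp'}}}(P)=w_{L_{\hat f}}(P)$, and the image statement follows since both $U_f$ and $U_{\hat f}$ are graphs missing $\langle(0,1)\rangle_{\F_{q^n}}$), which is precisely what the paper intends.
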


\begin{lemma}
\label{lem3}
Let $\varphi$ be an $\F_q$-linear map of $\F_{q^{n}}$ and for $\lambda\in \F_{q^n}^*$ let $\varphi_{\lambda}$ denote the $\F_q$-linear map: $x \mapsto \varphi(\lambda x)/\lambda$.
Then for each point $P\in \PG(1,q^n)$ we have $w_{L_{\varphi}}(P)=w_{L_{\varphi_{\lambda}}}(P)$. In particular,
$L_{\varphi}=L_{\varphi_{\lambda}}$.
\end{lemma}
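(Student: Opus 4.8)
The plan is to show that the $\F_q$-subspaces $U_\varphi$ and $U_{\varphi_\lambda}$ actually lie in the same $\G\mathrm{L}(2,q^n)$-orbit, which immediately gives $L_{U_\varphi}=L_{U_{\varphi_\lambda}}$ and, by Proposition \ref{invar0}, the equality of weights at corresponding points; but since the map realizing the equivalence fixes every point of $\PG(1,q^n)$, the weights actually coincide at the \emph{same} point $P$. Concretely, consider the $\F_{q^n}$-linear map $g_\lambda\colon \V\to\V$ defined by $g_\lambda(x,y)=(\lambda x,\lambda y)$, i.e. scalar multiplication by $\lambda\in\F_{q^n}^*$. This is clearly invertible and $\F_{q^n}$-linear, hence lies in $\GL(2,q^n)\subseteq\G(2,q^n)$.

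The key computation is to check that $g_\lambda(U_\varphi)=U_{\varphi_\lambda}$. Indeed, a generic vector of $U_\varphi$ has the form $(x,\varphi(x))$ with $x\in\F_{q^n}$, and
\[
g_\lambda\big((x,\varphi(x))\big)=\big(\lambda x,\lambda \varphi(x)\big).
\]
Writing $x'=\lambda x$ (which ranges over all of $\F_{q^n}$ as $x$ does), the second coordinate is $\lambda\varphi(x)=\lambda\varphi(x'/\lambda)=\varphi_\lambda(x')\cdot\lambda/\lambda$? — more carefully: $\varphi_\lambda(x')=\varphi(\lambda x')/\lambda$, so we need the second coordinate to equal $\varphi_\lambda(x')$, i.e. $\lambda\varphi(x)=\varphi_\lambda(\lambda x)=\varphi(\lambda\cdot\lambda x)/\lambda$, which is \emph{not} what we want. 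So instead I would use $g_\lambda(x,y)=(x/\lambda,y)$ composed appropriately; let me rather take $h_\lambda(x,y)=(\lambda^{-1}x, y)$ followed by nothing — the cleanest choice is the $\F_{q^n}$-linear map $(x,y)\mapsto(\lambda^{-1}x,\,\lambda^{-1}y)$ is scalar again. The honest fix: take the map $\psi_\lambda(x,y)=(\lambda x, \lambda y)$ does send $U_\varphi$ to $\{(\lambda x,\lambda\varphi(x)):x\in\F_{q^n}\}=\{(x',\lambda\varphi(x'/\lambda)):x'\in\F_{q^n}\}=\{(x',\varphi_\lambda(x')):x'\}=U_{\varphi_\lambda}$, using precisely $\varphi_\lambda(x')=\varphi(\lambda\cdot(x'/\lambda))\cdot\text{?}$ — here $\varphi_\lambda(x')=\varphi(\lambda x')/\lambda$, whereas we obtained $\lambda\varphi(x'/\lambda)$; these agree iff $\varphi$ is additive and one substitutes $x'\mapsto x'$: set $t=x'/\lambda$, then $\lambda\varphi(t)$ vs $\varphi_\lambda(\lambda t)=\varphi(\lambda^2 t)/\lambda$. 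So the correct substitution shows $\psi_\lambda(U_\varphi)=U_{\varphi'}$ where $\varphi'(x)=\lambda\varphi(x/\lambda)$, and this $\varphi'$ is exactly $\varphi_\lambda$ after renaming: $\varphi_\lambda(x)=\varphi(\lambda x)/\lambda$, and $\lambda\varphi(x/\lambda)$ is obtained from $\varphi_\lambda$ by replacing $\lambda$ with $\lambda^{-1}$. Since $\lambda$ ranges over $\F_{q^n}^*$, proving the statement for all $\varphi'$ of the first form is equivalent to proving it for all $\varphi_\lambda$; thus $U_{\varphi_\lambda}=\psi_{\lambda^{-1}}(U_\varphi)$ lies in the $\GL(2,q^n)$-orbit of $U_\varphi$.

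From here the conclusion is immediate: since $\psi_{\lambda^{-1}}\in\GL(2,q^n)\le\G(2,q^n)$ induces a collineation $\varphi_{\psi_{\lambda^{-1}}}$ of $\PG(1,q^n)$, Proposition \ref{invar0} gives $w_{L_{U_\varphi}}(P)=w_{L_{U_{\varphi_\lambda}}}(P^{\varphi_{\psi_{\lambda^{-1}}}})$ for every point $P$. But $\psi_{\lambda^{-1}}$ is scalar multiplication, so it induces the identity collineation, whence $P^{\varphi_{\psi_{\lambda^{-1}}}}=P$ and $w_{L_\varphi}(P)=w_{L_{\varphi_\lambda}}(P)$ for all $P\in\PG(1,q^n)$. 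In particular $L_\varphi=L_{\varphi_\lambda}$ since a linear set is determined by the set of points of positive weight. I do not anticipate a genuine obstacle here; the only thing requiring care is the bookkeeping in the previous paragraph, namely checking that the scalar map realizes the passage between $U_\varphi$ and $U_{\varphi_\lambda}$ with the correct parameter (replacing $\lambda$ by $\lambda^{-1}$, which is harmless since we quantify over all nonzero $\lambda$), and observing that a scalar map acts trivially on the projective line so that weights are preserved pointwise rather than merely up to the induced collineation.
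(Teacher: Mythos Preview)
Your underlying idea is the same as the paper's: the paper's entire proof is the single line ``The statements follow from $\lambda U_{\varphi_\lambda}=U_\varphi$,'' and after your detours you arrive at the equivalent identity $\psi_{\lambda^{-1}}(U_\varphi)=U_{\varphi_\lambda}$ with $\psi_\mu$ scalar multiplication by $\mu$. Once that is established, equality of weights at each fixed $P$ is immediate from the definition $w_{L_U}(P)=\dim_q(\langle {\bf z}\rangle_{\F_{q^n}}\cap U)$, since $\langle {\bf z}\rangle_{\F_{q^n}}\cap (\mu U)=\mu(\langle {\bf z}\rangle_{\F_{q^n}}\cap U)$.

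Two remarks on the write-up. First, the stream-of-consciousness with false starts (``which is \emph{not} what we want'', ``the honest fix'', the dangling ``?'') should be removed: just state directly that $\lambda U_{\varphi_\lambda}=\{(\lambda x,\varphi(\lambda x)):x\in\F_{q^n}\}=U_\varphi$, which is a one-line computation. Second, your appeal to Proposition~\ref{invar0} is formally illicit, since that proposition is stated under the standing hypotheses ``rank $n$'' and ``maximum field of linearity $\F_q$'', neither of which is assumed in Lemma~\ref{lem3}; you don't need the proposition anyway, because the weight equality follows straight from the definition once you know the subspaces differ by a scalar.
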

\begin{proof}
The statements follow from $\lambda U_{\varphi_{\lambda}}=U_{\varphi}$.
\end{proof}

\begin{remark}
The results of Lemmas \ref{lem2} and \ref{lem3} can also be obtained via Dickson matrices.
For a $q$-polynomial $f$ let $D_f$ denote the Dickson matrix associated with $f$.
When $f(x)=\lambda x$ for some $\lambda\in \F_{q^n}$ we will simply write $D_{\lambda}$.
We will denote the point $\la(1,\lambda)\ra_{q^n}$ by $P_{\lambda}$.

Transposition preserves the rank of matrices and $D_f^T=D_{\hat f}$, $D_\lambda^T=D_\lambda$. It follows that
\[\dim_q\ker(D_f-D_\lambda)=\dim_q\ker(D_f-D_\lambda)^T=\dim_q\ker(D_{\hat f}-D_\lambda),\]
and hence for each $\lambda\in \F_{q^n}$ we have $w_{L_f}(P_\lambda)=w_{L_{\hat f}}(P_\lambda)$.

Let $f_{\mu}(x)=f(x\mu)/\mu$. It is easy to see that $D_{1/\mu}D_fD_{\mu}=D_{f_\mu}$ and
\[\dim_q\ker(D_f-D_\lambda)=\dim_q\ker D_{1/\mu}(D_f-D_\lambda)D_{\mu}=\dim_q\ker(D_{f_{\mu}}-D_\lambda),\]
and hence $w_{L_f}(P_{\lambda})=w_{L_{f_{\mu}}}(P_{\lambda})$ for each $\lambda\in \F_{q^n}$.
\end{remark}

From the previous arguments it follows that linear sets $L_f$ with $f(x)=\hat{f}(x)$ are good candidates for being simple. In the next section we show that the trace function, which has the previous property, defines a simple linear set. We are going to use the following lemmas which will also be useful later.

\begin{lemma}
\label{lmain}
Let $f$ and $g$ be two linearized polynomials. If $L_f=L_g$, then for each positive integer $d$ the following holds
\[\sum_{x\in \F_{q^n}^*} \left(\frac{f(x)}{x}\right)^d = \sum_{x\in \F_{q^n}^*} \left(\frac{g(x)}{x}\right)^d.\]
\end{lemma}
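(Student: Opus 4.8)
The plan is to rewrite each power sum as a weighted point-count of $L_f$ and then to reduce the assertion to the statement that the weight function of $L_f$ is determined by its point set. Since $f$ is a $q$-polynomial over $\F_{q^n}$ we have $(0,f(0))=(0,0)$, so $\la(0,1)\ra_{\F_{q^n}}\notin L_f$ and every point of $L_f$ has the form $P_m:=\la(1,m)\ra_{\F_{q^n}}$; thus I may identify $L_f$ (and likewise $L_g$) with a subset of $\F_{q^n}$. For a fixed $m\in\F_{q^n}$ the set $\{x\in\F_{q^n}\colon f(x)=mx\}$ is the kernel of the $\F_q$-linear map $f-m\cdot\mathrm{id}$; under the identification $x\mapsto(x,f(x))$ this kernel is precisely $\la(1,m)\ra_{\F_{q^n}}\cap U_f$, which has $\F_q$-dimension $w_{L_f}(P_m)$. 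Hence $|\{x\in\F_{q^n}^*\colon f(x)/x=m\}|=q^{w_{L_f}(P_m)}-1$, and grouping the sum according to the value $m=f(x)/x$ gives, for every positive integer $d$,
\[\sum_{x\in\F_{q^n}^*}\left(\frac{f(x)}{x}\right)^{d}=\sum_{m\in\F_{q^n}}\left(q^{w_{L_f}(P_m)}-1\right)m^d=\sum_{P_m\in L_f}\left(q^{w_{L_f}(P_m)}-1\right)m^d,\]
and the same identity with $g$ in place of $f$.

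Because $L_f=L_g$, the two right-hand sides range over the same index set, so it now suffices to prove that $w_{L_f}(P)=w_{L_g}(P)$ for every point $P$; equivalently, that $L_f=L_g$ forces the multiset $\{\!\{\,f(x)/x\colon x\in\F_{q^n}^*\,\}\!\}$ to coincide with $\{\!\{\,g(x)/x\colon x\in\F_{q^n}^*\,\}\!\}$. This is the crux, and I expect it to be the main obstacle: for linear sets in general the weight of a point is \emph{not} determined by the point set (see Section~\ref{Prelim}), so it is precisely here that one must use that $U_f$ and $U_g$ are $n$-dimensional $\F_q$-subspaces of the $2$-dimensional $\F_{q^n}$-space $\F_{q^n}\times\F_{q^n}$. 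By Proposition~\ref{rankisgood} both $U_f$ and $U_g$ are $\F_{q^d}$-subspaces, where $\F_{q^d}$ is the common maximum field of linearity, so after replacing $\F_q$ by $\F_{q^d}$ one may assume the minimum weight equals $1$. A first numerical constraint is that, since $\dim_q U_f=\dim_q U_g=n$, the defect $c_m:=q^{w_{L_f}(P_m)}-q^{w_{L_g}(P_m)}$ satisfies $\sum_{m\in\F_{q^n}}c_m=0$; the genuinely geometric input that remains — the rigidity of the weight profile along a line — should be extracted either by a direct analysis of the fibres of $x\mapsto f(x)/x$ and $x\mapsto g(x)/x$ in the spirit of the proof of Proposition~\ref{rankisgood}, or from the study of $q$-polynomials defining a common linear set in \cite{CSZ2015}.

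Granting the weight invariance, the conclusion is immediate, since for each positive integer $d$
\[\sum_{x\in\F_{q^n}^*}\left(\frac{f(x)}{x}\right)^{d}=\sum_{P_m\in L_f}\left(q^{w_{L_f}(P_m)}-1\right)m^d=\sum_{P_m\in L_g}\left(q^{w_{L_g}(P_m)}-1\right)m^d=\sum_{x\in\F_{q^n}^*}\left(\frac{g(x)}{x}\right)^{d}.\]
Finally, it is worth noting that the implication reverses: the points $P_m$ of $L_f$ correspond to pairwise distinct scalars $m$, so a Vandermonde argument applied to the identities for $d=1,\dots,|L_f|-1$ together with the relation $\sum_m c_m=0$ (the ``$d=0$'' case) forces $c_m=0$ for every $m$. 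Thus Lemma~\ref{lmain} and the invariance of the weight function of a rank-$n$ linear set of $\PG(1,q^n)$ are equivalent, and it is enough to establish either one.
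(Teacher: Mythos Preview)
Your proof has a genuine gap: you reduce the lemma to the claim that $w_{L_f}(P)=w_{L_g}(P)$ for every point $P$, but you never prove this, and you yourself note that for linear sets in general the weight is \emph{not} determined by the point set. More importantly, this reduction is completely unnecessary. You correctly derive
\[
\sum_{x\in\F_{q^n}^*}\left(\frac{f(x)}{x}\right)^{d}=\sum_{m\in H}\bigl(q^{\,w_{L_f}(P_m)}-1\bigr)\,m^d,
\]
where $H=\{f(x)/x\colon x\in\F_{q^n}^*\}$. But this sum is taken in $\F_{q^n}$, which has characteristic $p$, and $q$ is a power of $p$; hence $q^{\,w_{L_f}(P_m)}-1=-1$ in $\F_{q^n}$ \emph{regardless of the weight}. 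The right-hand side therefore collapses to $-\sum_{m\in H}m^d$, and the identical computation for $g$ gives $-\sum_{m\in H}m^d$ as well, since $L_f=L_g$ forces the two image sets $H$ to coincide. This is exactly the paper's argument, and you had it in hand the moment you wrote down the weighted sum; the ``crux'' you identify simply does not arise.

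Your final paragraph is also flawed. The power-sum identities hold in $\F_{q^n}$, so a Vandermonde argument would at best yield $c_m\equiv 0\pmod p$; but $c_m=q^{i}-q^{j}$ with $i,j\ge 1$ is already a multiple of $p$, so this is vacuous. The lemma is therefore strictly weaker than weight invariance, not equivalent to it --- which is precisely why the paper can prove it in two lines while leaving the much harder questions about $\ZG$- and $\G$-classes for the rest of the article.
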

\begin{proof}
If $L_f=L_g=:L$, then $\{f(x)/x \colon x\in \F_{q^n}^*\}=\{g(x)/x \colon x\in \F_{q^n}^*\}=:H$.
For each $h\in H$ we have $|\{ x \colon f(x)/x=h\}|=q^i-1$, where $i$ is the weight of the point $\la(1,h)\ra_{q^n}\in L$ w.r.t. $U_f$,
and similarly $|\{ x \colon g(x)/x=h\}|=q^j-1$, where $j$ is the weight of the point $\la(1,h)\ra_{q^n}\in L$ w.r.t. $U_g$.
Because of the characteristic of $\F_{q^n}$, we obtain:
\[\sum_{x\in \F_{q^n}^*} \left(\frac{f(x)}{x}\right)^d = - \sum_{h\in H} h^d =\sum_{x\in F_{q^n}^*} \left(\frac{g(x)}{x}\right)^d.\]
\end{proof}

\begin{lemma}[Folklore]
\label{lfolk}
For any prime power $q$ and integer $d$ we have $\sum_{x\in \F_{q}^*} x^d=-1$ if $q-1 \mid d$ and
$\sum_{x\in \F_{q}^*} x^d=0$ otherwise.
\end{lemma}

\begin{lemma}
\label{mainLEMMA}
Let $f(x)=\sum_{i=0}^{n-1}a_i x^{q^i}$ and $g(x)=\sum_{i=0}^{n-1}b_i x^{q^i}$ be two $q$-polynomials over $\F_{q^n}$, such that $L_f=L_g$.
Then
\begin{equation}
\label{a0ba0}
a_0=b_0,
\end{equation}
and for $k=1,2,\ldots,n-1$ it holds that
\begin{equation}
\label{e1}
a_ka_{n-k}^{q^k}=b_kb_{n-k}^{q^k},
\end{equation}
for $k=2,3,\ldots,n-1$ it holds that
\begin{equation}
\label{e2}
a_1a_{k-1}^q a_{n-k}^{q^k}+a_ka_{n-1}^qa_{n-k+1}^{q^k}=b_1b_{k-1}^q b_{n-k}^{q^k}+b_kb_{n-1}^qb_{n-k+1}^{q^k}.
\end{equation}
\end{lemma}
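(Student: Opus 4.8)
The plan is to exploit Lemma \ref{lmain}, which tells us that $L_f=L_g$ forces the power sums $\sum_{x\in\F_{q^n}^*}(f(x)/x)^d$ and $\sum_{x\in\F_{q^n}^*}(g(x)/x)^d$ to coincide for every positive integer $d$. I would compute these power sums explicitly for small values of $d$ by expanding $(f(x)/x)^d=\left(\sum_{i=0}^{n-1}a_i x^{q^i-1}\right)^d$ via the multinomial theorem, then summing over $x\in\F_{q^n}^*$ term by term. By Lemma \ref{lfolk}, a monomial $c\,x^e$ contributes $-c$ to $\sum_{x\in\F_{q^n}^*}c\,x^e$ exactly when $q^n-1\mid e$ and $0$ otherwise, so the whole problem reduces to bookkeeping: for each $d$, determine which tuples of exponents $(i_1,\dots,i_d)$ produce a combined exponent $\sum_j(q^{i_j}-1)\equiv 0\pmod{q^n-1}$, and collect the corresponding coefficient contributions on each side.

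Concretely, for $d=1$ the exponents are $q^i-1$ for $0\le i\le n-1$; only $i=0$ gives exponent $0\equiv 0\pmod{q^n-1}$, so $\sum(f(x)/x)=-a_0$ and likewise $-b_0$, yielding \eqref{a0ba0}. For $d=2$ one needs $q^i-1+q^j-1\equiv 0\pmod{q^n-1}$, i.e. $q^i+q^j\equiv 2$; the solutions are $i=j=0$ (already accounted for, contributing $a_0^2$) and the pairs $\{i,n-i\}$ with $1\le i\le n-1$, giving $q^i+q^{n-i}\equiv q^i+q^{-i}\cdot q^n\equiv$ a value that is $\equiv 1+1=2$ only after the right normalization — here one must track the $q^k$-power twist coming from raising to powers of $q$, which is precisely why \eqref{e1} reads $a_ka_{n-k}^{q^k}$ rather than $a_ka_{n-k}$. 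Matching coefficients for each residue class of the exponent separately then gives \eqref{e1}. For $d=3$ one solves $q^{i_1}+q^{i_2}+q^{i_3}\equiv 3\pmod{q^n-1}$; besides the trivial and "one-zero-plus-a-symmetric-pair" solutions (which recover earlier relations), the genuinely new solutions are of the shape $\{1,k-1,n-k\}$ and $\{k,n-1,n-k+1\}$ (and their cyclic $q$-power rotations), producing exactly the two groups of terms in \eqref{e2}.

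The main obstacle will be the exponent arithmetic modulo $q^n-1$: one has to argue carefully that the only solutions of $\sum_{j=1}^d q^{i_j}\equiv d\pmod{q^n-1}$ with $0\le i_j\le n-1$ are the "expected" ones, using base-$q$ digit considerations (the sum of $d$ powers of $q$, each at most $q^{n-1}$, lies between $d$ and $d\,q^{n-1}$, so the congruence class pins down the multiset of exponents up to the wraparound $q^n\equiv 1$). A secondary bookkeeping point is keeping the Frobenius twists straight: when a term $x^{q^{i_1}-1}\cdot x^{q^{i_2}-1}\cdots$ is reindexed so that the total exponent becomes a genuine multiple of $q^n-1$, one factors out a power $q^k$, and the coefficients pick up that Frobenius, which is the source of all the $q^k$-exponents in \eqref{e1} and \eqref{e2}. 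Once the $d=1,2,3$ cases are organized this way and the coefficient identities are read off residue-class by residue-class, the lemma follows; I would present $d=1$ in full, then indicate the pattern for $d=2$ and $d=3$, relegating the exponent-matching to a short combinatorial lemma.
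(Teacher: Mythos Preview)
Your high-level plan --- apply Lemma~\ref{lmain} for well-chosen $d$, expand, and invoke Lemma~\ref{lfolk} --- is exactly right, and your treatment of $d=1$ correctly yields \eqref{a0ba0}. But the choices $d=2$ and $d=3$ do \emph{not} produce \eqref{e1} and \eqref{e2}. When $d=2$ the expansion is simply $\bigl(f(x)/x\bigr)^2=\sum_{i,j}a_ia_j\,x^{q^i+q^j-2}$, with no Frobenius twist on the coefficients whatsoever; your remark that ``one must track the $q^k$-power twist coming from raising to powers of $q$'' has no basis here, since nothing is raised to a $q$-th power. The divisibility condition $q^i+q^j\equiv 2\pmod{q^n-1}$ with $0\le i,j\le n-1$ has only the solution $i=j=0$, so $d=2$ returns merely $a_0^2=b_0^2$, which you already know. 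Likewise $d=3$ gives only $a_0^3=b_0^3$: the triples $\{1,k-1,n-k\}$ you list do \emph{not} satisfy $q^{i_1}+q^{i_2}+q^{i_3}\equiv 3\pmod{q^n-1}$.

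The missing idea is that $d$ itself must be a sum of powers of $q$. After reducing to $a_0=b_0=0$ (replace $f,g$ by $f(x)-a_0x$, $g(x)-b_0x$), take $d=q^k+1$ for each $k=1,\dots,n-1$: then $(f(x)/x)^{q^k+1}=(f(x)/x)^{q^k}\cdot(f(x)/x)=\sum_{i,j}a_i\,a_j^{q^k}\,x^{q^i+q^{j+k}-q^k-1}$, and now the Frobenius on the coefficients is genuine. The congruence $q^i+q^{j+k}\equiv 1+q^k\pmod{q^n-1}$ with $1\le i,j\le n-1$ has the unique solution $(i,j)=(k,n-k)$, which delivers exactly $a_ka_{n-k}^{q^k}=b_kb_{n-k}^{q^k}$. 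For \eqref{e2} take $d=q^k+q+1$; the resulting congruence $q^i+q^{j+1}+q^{m+k}\equiv 1+q+q^k\pmod{q^n-1}$ (again with $i,j,m\ge 1$) has precisely two solutions, $(i,j,m)=(1,k-1,n-k)$ and $(k,n-1,n-k+1)$, giving the two summands on each side of \eqref{e2}. Your base-$q$ digit bookkeeping is then the right tool to verify these are the only solutions.
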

\begin{proof}
We are going to use Lemma \ref{lfolk} together with Lemma \ref{lmain} with different choices of $d$.

With $d=1$ we have
\[\sum_{x\in \F_{q^n}^*} \sum_{i=0}^{n-1} a_i x^{q^i-1} = \sum_{x\in \F_{q^n}^*} \sum_{i=0}^{n-1} b_i x^{q^i-1},\]
and hence
\[\sum_{i=0}^{n-1} a_i \sum_{x\in \F_{q^n}^*} x^{q^i-1} = \sum_{i=0}^{n-1} b_i \sum_{x\in \F_{q^n}^*} x^{q^i-1}.\]
Since $q^n-1$ cannot divide $q^i-1$ with $i=1,2,\ldots,n-1$, $a_0=b_0=:c$ follows. Let $\varphi$ denotes the $\F_q$-linear map which fixes $(0,1)$ and maps $(1,0)$ to $(1,-c)$. Then $U_f^\varphi=U_{f'}$ and $U_g^{\varphi}=U_{g'}$ with $f'=\sum_{i=1}^{n-1} a_i x^{q^i}$, $g'=\sum_{i=1}^{n-1} b_i x^{q^i}$ and of course with $L_{f'}=L_{g'}$. It follows that we may assume $c=0$.

First we show that \eqref{e1} holds.
With $d=q^k+1$, $1\leq k \leq n-1$ we obtain
\[\sum_{1\leq i,j \leq n-1} a_ia_j^{q^k} \sum_{x\in \F_{q^n}^*} x^{q^i-1+q^{j+k}-q^k}=\sum_{1\leq i,j \leq n-1} b_ib_j^{q^k} \sum_{x\in \F_{q^n}^*} x^{q^i-1+q^{j+k}-q^k}.\]
$\sum_{x\in \F_{q^n}^*} x^{q^i-1+q^{j+k}-q^k}=-1$ if and only if $q^i+q^{j+k}\equiv q^k+1 \pmod {q^n-1}$, and zero otherwise.
Suppose that the former case holds.

First consider $j+k \leq n-1$.
Then $q^i+q^{j+k} \leq q^{n-1}+q^{n-1}<q^k+1+2(q^n-1)$ hence one of the following holds.
    \begin{itemize}
        \item If $q^i+q^{j+k}=q^k+1$, then the right hand side is not divisible by $q$, a contradiction.
        \item If $q^i+q^{j+k}=q^k+1+(q^n-1)=q^n+q^k$, then $j+k=n$, a contradiction.
     \end{itemize}

Now consider the case $j+k \geq n$.
Then $q^i+q^{j+k} \equiv q^i+q^{j+k-n} \equiv q^k+1 \pmod {q^n-1}$.
Since $j+k \leq 2(n-1)$, we have $q^i+q^{j+k-n} \leq q^{n-1}+q^{n-2}<q^k+1+2(q^n-1)$, hence one of the following holds.
    \begin{itemize}
        \item If $q^i+q^{j+k-n}= q^k+1$, then $j+k=n$ and $i=k$.
        \item If $q^i+q^{j+k-n}=q^k+1+(q^n-1)=q^n+q^k$, then there is no solution since $j+k-n \notin \{k,n\}$.
    \end{itemize}

Hence \eqref{e1} follows.
Now we show that \eqref{e2} also holds.
Note that in this case $n\geq 3$, otherwise there is no $k$ with $2\leq k \leq n-1$.
With $d=q^k+q+1$, we obtain
\[\sum_{1\leq i,j,m \leq n-1} a_ia_j^qa_m^{q^k} \sum_{x\in \F_{q^n}^*} x^{q^i-1+q^{j+1}-q+q^{m+k}-q^k}=\]
\[\sum_{1\leq i,j,m \leq n-1} b_ib_j^qb_m^{q^k} \sum_{x\in \F_{q^n}^*} x^{q^i-1+q^{j+1}-q+q^{m+k}-q^k}.\]
$\sum_{x\in \F_{q^n}^*} x^{q^i-1+q^{j+1}-q+q^{m+k}-q^k}=-1$ if and only if $q^i+q^{j+1}+q^{m+k}\equiv q^k+q+1 \pmod {q^n-1}$, and zero otherwise.
Suppose that the former case holds.

First consider $m+k \leq n-1$.
Then $q^i+q^{j+1}+q^{m+k} \leq q^{n-1}+q^n+q^{n-1}<q^k+q+1+2(q^n-1)$ hence one of the following holds.
    \begin{itemize}
        \item If $q^i+q^{j+1}+q^{m+k}= q^k+q+1$, then the right hand side is not divisible by $q$, a contradiction.
        \item If $q^i+q^{j+1}+q^{m+k}=q^k+q+1+(q^n-1)=q^n+q^k+q$, then $m+k=n$, $j+1=k$ and $i=1$, a contradiction.
     \end{itemize}

Now consider the case $m+k \geq n$.
Then $q^i+q^{j+1}+q^{m+k} \equiv q^i+q^{j+1}+q^{m+k-n} \equiv q^k+q+1 \pmod {q^n-1}$.
We have $q^i+q^{j+1}+q^{m+k-n} \leq q^{n-1}+q^n+q^{n-2}<q^k+q+1+2(q^n-1)$ hence one of the following holds.
    \begin{itemize}
        \item If $q^i+q^{j+1}+q^{m+k-n}= q^k+q+1$, then $j+1=k$, $i=1$ and $m+k=n$.
        \item If $q^i+q^{j+1}+q^{m+k-n}=q^k+q+1+(q^n-1)=q^n+q^k+q$, then $j+1=n$, $i=k$ and $m+k=n+1$.
    \end{itemize}
This concludes the proof.
\end{proof}

\subsection{Linear sets defined by the trace function}
\label{trace}

We show that there exist at least one simple $\F_q$-linear set in $\PG(1,q^n)$ for each $q$ and $n$. Let
$V=\{(x,\Tr_{q^n/q}(x)) \colon x\in \F_{q^n}\}$. We show that $L_U=L_V$ occurs for an $\F_q$-subspace $U$ of $W$ if and only if $V=\lambda U$ for some $\lambda\in \F_{q^n}^*$, i.e. $L_V$ is of $\ZG$-class one. For the special case when $L_U$ has a point of weight $n-1$ see also {\cite[Theorem 2.3]{MV2016}}.

\begin{theorem}
\label{THMtrace}
The $\F_q$-subspace $U_f=\{(x,f(x))\colon x\in \F_{q^n}\}$ defines the same linear set of $\PG(1,q^n)$ as the $\F_q$-subspace  $V=\{(x,\Tr_{q^n/q}(x)) \colon x\in \F_{q^n}\}$ if and only if $\lambda U_f=V$ for some $\lambda\in \F_{q^n}^*$, i.e. $L_V$ is simple.
\end{theorem}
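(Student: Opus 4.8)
The plan is to show that if $L_f = L_V$ with $V = U_{\Tr}$, then $f$ must be a scalar multiple of a conjugate of the trace map, and then to check that such maps all give $U_f = \lambda V$. I would write $f(x) = \sum_{i=0}^{n-1} a_i x^{q^i}$ and exploit that $L_V$ has a point of weight $n-1$, namely $P_1 = \langle (1,1) \rangle$, since $\Tr_{q^n/q}(x) = x$ on the whole $1$-dimensional $\F_q$-space of elements with trace equal to themselves --- wait, more carefully: the kernel of $x \mapsto \Tr(x) - x$ has dimension $n-1$ over $\F_q$ because $\Tr$ is surjective onto $\F_q$ and $x \mapsto \Tr(x)-x$ has image contained in a hyperplane. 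So $w_{L_V}(P_1) = n-1$. By Proposition~\ref{invar0}, $w_{L_f}(P_1) = n-1$ as well, so $f(x) - x$ has an $(n-1)$-dimensional kernel, i.e. $f(x) = x + \ell(x)$ where $\ell$ is $\F_q$-linear of rank $1$. Thus $\ell(x) = b\,\Tr_{q^n/q}(cx)$ for suitable $b,c$; after using Lemma~\ref{lem2} (the adjoint has the same linear set, so we may also assume things about $\hat f$) and Lemma~\ref{lem3} (replacing $f$ by $f_\lambda$, which lets us rescale $c$), one should be able to normalize to $f(x) = x + b\,\Tr_{q^n/q}(x)$ for some $b \in \F_{q^n}$.

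Next I would pin down $b$. Here the equations from Lemma~\ref{mainLEMMA} are the natural tool: writing out \eqref{a0ba0}, \eqref{e1}, \eqref{e2} for $g = \Tr$ (so $b_0 = 1$, $b_k = 1$ for all $k$) and for $f(x) = x + b\Tr(x)$ (so $a_0 = 1+b$... actually after the weight-$(n-1)$ normalization $a_0 = 1$, $a_k = b$ for $k \geq 1$), equation \eqref{e1} gives $b\cdot b^{q^k} = 1$ for $k = 1,\dots,n-1$, which forces $\N(b) = \ldots$; combined with $b^{1+q} = b^{1+q^{n-1}}$ type relations this should pin $b$ down to $\N(b) = 1$ and then further. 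Actually the cleanest route: $a_k a_{n-k}^{q^k} = b^{1+q^k} = 1$ for each $k$, so $b^{1+q^k} = 1$ for all $k \in \{1, \dots, n-1\}$; taking $k=1$ and $k=2$ (when $n \geq 3$) gives $b^{1+q} = b^{1+q^2} = 1$, hence $b^{q^2 - q} = 1$, i.e. $b^{q(q-1)} = 1$, so $b^{q-1} = 1$ (as $\gcd(q, q^n-1)=1$), so $b \in \F_q$; then $b^{1+q} = b^2 = 1$. One then has to rule out $b = -1$ (which would make $f(x) = x - \Tr(x)$, but this has an $(n-1)$-dimensional kernel on the wrong point and in fact $\N$ considerations or direct inspection show $U_f = \lambda V$ fails only if... ) --- more likely $b = -1$ still gives a scalar multiple of $V$, or the case analysis shows $f = \Tr$ up to the allowed moves. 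The small cases $n = 1, 2$ must be handled separately and are trivial.

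Finally, the converse direction --- that $\lambda U_f = V$ implies $L_f = L_V$ --- is immediate from Lemma~\ref{lem3} (or directly: scaling a defining subspace does not change the linear set). So the substance is entirely in the forward direction, and the output is a normal form for $f$ plus a computation forcing $f$ into the $\Gamma\mathrm{L}$-orbit, indeed the $\mathcal{Z}(\Gamma\mathrm{L})$-orbit, of $\Tr$.

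The main obstacle I expect is the normalization step: getting from ``$f(x) = x + b\Tr_{q^n/q}(cx)$ up to equivalence'' down to a single clean representative, because the moves available (adjoint via Lemma~\ref{lem2}, the $\varphi$-conjugation used in the proof of Lemma~\ref{mainLEMMA} to kill $a_0$, and the $f \mapsto f_\lambda$ rescaling of Lemma~\ref{lem3}) interact, and one must be careful that the normalization does not secretly assume what is to be proved --- in particular one must track that every reduction keeps us in the same $\mathcal{Z}(\Gamma\mathrm{L})$-class so that the final conclusion ``$U_f = \lambda V$'' is legitimate rather than merely ``$U_f$ is $\Gamma\mathrm{L}$-equivalent to $V$.'' A secondary subtlety is that Lemma~\ref{mainLEMMA}'s equations \eqref{e2} require $n \geq 3$, so the pinning-down of $b$ via $k=1,2$ needs $n\geq 3$ and the low-dimensional cases need a separate (easy) argument.
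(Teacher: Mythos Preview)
There is a genuine gap at the very first step. You invoke Proposition~\ref{invar0} to conclude that $w_{L_f}(P_1)=n-1$ from $w_{L_V}(P_1)=n-1$, but that proposition only says that a single subspace and its image under an element of $\Gamma\mathrm{L}(2,q^n)$ have matching weights at corresponding points. It says \emph{nothing} about two distinct subspaces $U_f$ and $V$ with $L_f=L_V$; indeed, the whole point of the $\ZG$-class being possibly larger than one is that different subspaces can define the same point set while assigning different weights. So you are assuming exactly what has to be proved: that the weight distribution of $L_V$ forces the weight distribution of $L_f$. (Incidentally, the weight-$(n-1)$ point of $L_V$ is $\langle(1,0)\rangle$, not $\langle(1,1)\rangle$: the kernel of $x\mapsto\Tr(x)-x$ has dimension $0$ or $1$, not $n-1$, since $\Tr(x)=x$ forces $x\in\F_q$.)

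The paper avoids this circularity by not using weights at all. It applies Lemma~\ref{mainLEMMA} directly with $g=\Tr$ (so $b_i=1$ for all $i$), obtaining $a_0=1$, $a_k a_{n-k}^{q^k}=1$, and $a_1 a_{k-1}^q a_{n-k}^{q^k}+a_k a_{n-1}^q a_{n-k+1}^{q^k}=2$. From these it proves by induction that $a_i=a_1^{1+q+\cdots+q^{i-1}}$ for all $i$, and then that $\N(a_1)=1$, whence $a_1=\lambda^{q-1}$ and $f(x)=\sum_i \lambda^{q^i-1}x^{q^i}$, i.e.\ $\lambda U_f=V$. Your idea of using \eqref{e1} to get $b^{1+q^k}=1$ only becomes available \emph{after} one has already reduced to $a_k=b$ for all $k\geq 1$ by some legitimate means --- and that reduction is precisely what the paper's induction supplies, not the unjustified weight argument. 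If you want to salvage a weight-based approach, you would first need an independent argument that any $U_f$ with $L_f=L_{\Tr}$ must itself have a point of weight $n-1$; the paper notes this special case was handled in \cite{MV2016}, but the general statement needs more.
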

\begin{proof}
Let $f(x)=\sum_{i=0}^{n-1}a_i x^{q^i}$.
We are going to use Lemma \ref{mainLEMMA} with $g(x)=\Tr_{q^n/q}(x)$. The coefficients $b_0,b_1, \ldots,b_{n-1}$ of $g(x)$ are 1, hence
$a_0=1$, and for $k=1,2,\ldots,n-1$
\begin{equation}
\label{f1}
a_k a_{n-k}^{q^k}=1,
\end{equation}
for $k=2,3,\ldots,n-1$
\begin{equation}
\label{f2}
a_1a_{k-1}^qa_{n-k}^{q^k}+a_ka_{n-1}^qa_{n-k+1}^{q^k}=2.
\end{equation}
Note that \eqref{f1} implies $a_i \neq 0$ for $i=1,2,\ldots,n-1$.
First we prove
\begin{equation}
\label{ind}
a_i=a_1^{1+q+\ldots+q^{i-1}}
\end{equation}
by induction on $i$ for each $0< i < n $.
The assertion holds for $i=1$. Suppose that it holds for some integer $i-1$ with $1< i < n$. We prove that it also holds for $i$.
Then \eqref{f2} with $k=i$ gives
\begin{equation}
\label{kell}
a_1a_{i-1}^qa_{n-i}^{q^i}+a_i a_{n-1}^q a_{n-i+1}^{q^i}=2.
\end{equation}
Also, \eqref{f1} with $k=i$, $k=i-1$ and $k=1$, respectively, gives
\[a_{n-i}^{q^i}=1/a_i,\]
\[a_{n-i+1}^{q^i}=1/a_{i-1}^q,\]
\[a_{n-1}^{q}=1/a_1.\]
Then \eqref{kell} gives
\begin{equation}
a_1a_{i-1}^q/a_i+a_i /\left(a_1 a_{i-1}^q\right)=2.
\end{equation}
It follows that $a_1a_{i-1}^q/a_i=1$ and hence the induction hypothesis on $a_{i-1}$ yields $a_i=a_1^{1+q+\ldots+q^{i-1}}$.

Finally we show $\N(a_1)=1$.
First consider $n$ even. Then \eqref{f1} with $k=n/2$ gives $a_{n/2}^{q^{n/2}+1}=1$. Applying \eqref{ind} yields $\N(a_1)=1$.
If $n$ is odd, then \eqref{f1} with $k=(n-1)/2$ gives $a_{(n-1)/2}a_{(n+1)/2}^{q^{(n-1)/2}}=1$.
Applying \eqref{ind} yields $\N(a_1)=1$.
It follows that $a_1=\lambda^{q-1}$ for some $\lambda\in \F_{q^n}^*$ and hence
$f(x)=\sum_{i=0}^{n-1}\lambda^{q^i-1} x^{q^i}$. Then $\lambda U_f = \{(x,\Tr_{q^n/q}(x)) \colon x\in \F_{q^n}^*\}$.
\end{proof}

\subsection{Non-simple linear sets}
\label{nonsimple}

So far, the only known non-simple linear sets of $\PG(1,q^n)$ are those of pseudoregulus type when $n=5$, or $n>6$, see Remark \ref{nonsimpleex}. Now we want to show that $\F_q$-linear sets $L_f$ of $\PG(1,q^n)$ introduced by Lunardon and Polverino, which are not of pseudoregulus type (\cite[Theorems 2 and 3]{LP2001}, are non-simple as well. Let start by proving the following preliminary result.

\begin{proposition}
\label{eqn0}
Let $f(x)=\sum_{i=0}^{n-1}a_ix^{q^i}$. There is an $\F_{q^n}$-semilinear
map between $U_f$ and $U_{\hat f}$ if and only if the following system of $n$ equations has a solution $A,B,C,D \in \F_{q^n}$, $AD-BC\neq 0$, $\sigma = p^k$:
\[C + Da_0^{\sigma}-a_0A=\sum_{i=0}^{n-1} (Ba_ia_i^{\sigma})^{q^{n-i}},\]
\[\ldots\]
\[Da_m^{\sigma}-(a_{n-m}A)^{q^m}=\sum_{i=0}^{n-1} (Ba_ia_{i+m}^{\sigma})^{q^{n-i}},\]
\[\ldots\]
\[Da_{n-1}^{\sigma}-(a_1A)^{q^{n-1}}=\sum_{i=0}^{n-1} (Ba_ia_{i+n-1}^{\sigma})^{q^{n-i}},\]
where the indices are taken modulo $n$.
\end{proposition}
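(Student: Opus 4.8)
The plan is to translate the existence of an $\F_{q^n}$-semilinear map $U_f \to U_{\hat f}$ into an explicit condition on the coefficient polynomials and then match coefficients $q$-power by $q$-power. First I would recall that an invertible $\F_{q^n}$-semilinear map $g$ of $\V = \F_{q^n}\times\F_{q^n}$ with companion automorphism $\sigma = p^k$ is of the form $g\colon (x,y)\mapsto (Ax^\sigma + By^\sigma,\ Cx^\sigma + Dy^\sigma)$ for some $A,B,C,D\in\F_{q^n}$ with $AD-BC\neq 0$. Applying $g$ to the generic point $(x,f(x))\in U_f$ gives the point $(Ax^\sigma + Bf(x)^\sigma,\ Cx^\sigma + Df(x)^\sigma)$, and $U_f^g = U_{\hat f}$ precisely means that for every $x\in\F_{q^n}$ this point lies on $U_{\hat f}$, i.e.
\[
C x^\sigma + D f(x)^\sigma = \hat f\!\left(A x^\sigma + B f(x)^\sigma\right),
\]
together with the requirement that $x\mapsto Ax^\sigma + Bf(x)^\sigma$ is a bijection (which is automatic once $g$ is invertible and the identity above holds, since $U_f$ and $U_{\hat f}$ are full graphs). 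So the whole content is the polynomial identity above, viewed as an identity of $q$-polynomials in $x$.

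Next I would expand both sides into $\sum_m (\cdots) x^{q^m}$ and equate coefficients. On the left, $f(x)^\sigma = \sum_i a_i^\sigma x^{q^i\sigma}$; since $\sigma$ and the $q$-Frobenius commute and everything lives in $\F_{q^n}$, exponents are read modulo $n$, so the coefficient of $x^{q^m}$ on the left is $C$ (for $m=0$), plus $D a_m^\sigma$ coming from $Df(x)^\sigma$ — here one uses that $x^\sigma = x^{p^k}$ and matches the exponent $q^i p^k$ against $q^m$; after absorbing the $\sigma$ into the coefficients and relabelling, one gets exactly the terms $C + Da_0^\sigma$ at $m=0$ and $Da_m^\sigma$ for $m\geq 1$, minus the contribution $A x^\sigma$ which, using $\hat f(x)=\sum_i a_i^{q^{n-i}} x^{q^{n-i}}$, feeds into the right-hand side. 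On the right, $\hat f(Ax^\sigma + Bf(x)^\sigma) = \sum_i a_i^{q^{n-i}} \big(Ax^\sigma + B\sum_j a_j^\sigma x^{q^j\sigma}\big)^{q^{n-i}}$; raising to $q^{n-i}$ distributes over the sum (characteristic $p$, and these are $q$-polynomials), giving $\sum_i a_i^{q^{n-i}} A^{q^{n-i}} x^{\sigma q^{n-i}} + \sum_{i,j} a_i^{q^{n-i}} B^{q^{n-i}} a_j^{\sigma q^{n-i}} x^{q^j \sigma q^{n-i}}$. Collecting the coefficient of $x^{q^m}$: the first sum contributes the term $-(a_{n-m}A)^{q^m}$ after moving it to the left (the index $i$ with $n-i\equiv m$ is $i = n-m$, and $\sigma$ on $x$ shifts which $m$ we land on — I would be careful here to account correctly for how $\sigma=p^k$ interacts with the $q^n-1$ periodicity, but since $\F_{q^n}^*$ has order $q^n-1$ and $p^{ke}$-th powering is a field automorphism of $\F_{q^n}$, the bookkeeping is clean); the double sum contributes $\sum_i (B a_i a_{i+m}^\sigma)^{q^{n-i}}$ after re-indexing $j \mapsto i+m \pmod n$ and using $(a_i^{q^{n-i}})(a_{i+m}^{\sigma q^{n-i}}) B^{q^{n-i}} = (B a_i a_{i+m}^\sigma)^{q^{n-i}}$. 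Equating the left and right coefficients of $x^{q^m}$ for $m=0,1,\dots,n-1$ yields exactly the displayed system.

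The main obstacle I anticipate is purely the index arithmetic: correctly tracking the exponent $q^m$ through the composition $\hat f \circ (\text{affine combination})$, in particular how the companion automorphism $\sigma = p^k$ shifts exponents relative to the $q$-Frobenius, and verifying that replacing $B$ by a power of $B$ in each term is harmless because we are always raising a monomial $x^{q^s}$ to a further $q$-power so all the $q$-power exponents on the coefficients are consistent. A secondary point to address is the ``only if'' direction's nondegeneracy clause: one must note that a solution of the system with $AD-BC\neq 0$ genuinely produces an invertible semilinear map (not merely a semilinear map whose restriction happens to send $U_f$ into $U_{\hat f}$), but this is immediate since $AD-BC\neq 0$ is exactly the invertibility condition for $g$, and an invertible semilinear map sending the full graph $U_f$ onto a subset of the full graph $U_{\hat f}$ of the same $\F_q$-dimension must send it onto all of $U_{\hat f}$. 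Conversely, from a semilinear isomorphism $U_f\to U_{\hat f}$ one reads off $A,B,C,D,\sigma$ and the identity above holds on all of $\F_{q^n}$, hence as polynomials (two $q$-polynomials of degree $< q^n$ agreeing on $\F_{q^n}$ are equal), giving the system. I would close by remarking that the same computation works verbatim with $\hat f$ replaced by any target $g$, so the proposition is really a special case of a general criterion for $\F_{q^n}$-equivalence of two graphs.
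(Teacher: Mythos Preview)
Your approach is correct and essentially identical to the paper's: write the semilinear map as $(x,y)\mapsto(Ax^\sigma+By^\sigma,\,Cx^\sigma+Dy^\sigma)$, impose $Cx^\sigma+Df(x)^\sigma=\hat f(Ax^\sigma+Bf(x)^\sigma)$ for all $x$, and compare coefficients. The paper dissolves your anticipated ``main obstacle'' in one stroke by viewing the identity as a polynomial in the variable $x^\sigma$ rather than $x$ (after reduction modulo $x^{q^n}-x$ it is a $q$-polynomial of degree at most $q^{n-1}$ in $x^\sigma$), so $\sigma$ appears only in the coefficients $a_j^\sigma$ and never in the exponent bookkeeping.
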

\begin{proof}
Because of cardinality reasons the condition $AD-BC\neq 0$ is necessary. Then
\[
\{(x,\hat{f}(x)) \colon x\in \F_{q^n}\}=
\left\{\begin{pmatrix}
A & B \\
C & D \\
\end{pmatrix}
\begin{pmatrix}
x^{\sigma} \\
f(x)^{\sigma} \\
\end{pmatrix}
\colon x\in \F_{q^n} \right\}
\]
holds if and only if
\[Cx^{\sigma}+D\sum_{j=0}^{n-1}a_j^{\sigma} x^{\sigma q^j}=\sum_{i=0}^{n-1}a_{n-i}^{q^i}\left(Ax^{\sigma}+B\sum_{j=0}^{n-1}a_j^{\sigma} x^{\sigma q^j}\right)^{q^i}\]
for each $x\in \F_{q^n}$.
After reducing modulo $x^{q^n}-x$, this is a polynomial equation of degree $q^{n-1}$ in the variable $x^{\sigma}$.
It follows that it holds for each $x\in \F_{q^n}$ if and only if it is the zero polynomial. Comparing coefficients on both sides yields the assertion.
\end{proof}

We are able to prove the following.

\begin{proposition}
\label{nemsimple}
Consider a polynomial of the form $f(x)=\delta x^q+x^{q^{n-1}}$, where $q>4$ is a power of the prime $p$.
If $n>4$, then for each generator $\delta$ of the multiplicative group of $\F_{q^n}$ the linear set $L_f$ is not simple.
\end{proposition}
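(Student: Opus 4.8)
The plan is to show that $L_f$ is non-simple by exhibiting two $\F_q$-subspaces defining $L_f$ that lie on different $\mathrm{\Gamma L}(2,q^n)$-orbits. The natural candidates are $U_f$ itself and $U_{\hat f}=U_f^{\perp'}$, where $\hat f(x)=\delta^{q} x^{q^{n-1}}+x^q$ is the adjoint of $f(x)=\delta x^q+x^{q^{n-1}}$ with respect to the form \eqref{form:angle}. By Lemma~\ref{lem2} we already know $L_f=L_{\hat f}$, so it suffices to prove that $U_f$ and $U_{\hat f}$ are \emph{not} $\mathrm{\Gamma L}(2,q^n)$-equivalent. For this I would invoke Proposition~\ref{eqn0}: the existence of an $\F_{q^n}$-semilinear map between $U_f$ and $U_{\hat f}$ is equivalent to the solvability of the displayed system in $A,B,C,D\in\F_{q^n}$ with $AD-BC\neq 0$ and $\sigma=p^k$. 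The goal is to derive a contradiction from the assumption that this system has a solution.

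First I would specialize the system of Proposition~\ref{eqn0} to the present $f$, in which only $a_1=\delta$ and $a_{n-1}=1$ are nonzero (and $a_0=0$). This kills almost every term: the right-hand sums $\sum_i (Ba_ia_{i+m}^\sigma)^{q^{n-i}}$ collapse to at most the two contributions coming from $i=1$ and $i=n-1$, and similarly the left-hand sides only survive for a few indices $m$. I expect the $n$ equations to reduce to a small handful of polynomial identities in $A,B,C,D$ (with exponents built from $\delta$, $\delta^\sigma$ and various $q$-power Frobenius twists), plus the statement that \emph{all other} equations read $0=0$ or force $A=0$, $D=0$, etc. Here the hypothesis $n>4$ is what guarantees that enough of the index positions $m\in\{0,1,\dots,n-1\}$ are "generic" — i.e.\ that the two surviving indices $1$ and $n-1$ land in distinct equations and do not overlap with the left-hand terms in a way that would give extra freedom. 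The hypothesis $q>4$ should be used to rule out sporadic coincidences among small powers of $q$ (and to ensure $\delta$, being a generator of $\F_{q^n}^*$, is genuinely of full multiplicative order, in particular $\N(\delta)\neq 1$).

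From the reduced system I would then argue as follows: the equations in the "empty" positions $m$ force $A=D=0$ (since those positions have zero right-hand side and a single left-hand term of the form $(a_j A)^{q^m}$ or $D a_j^\sigma$), and then $AD-BC\neq 0$ forces $B,C\neq 0$. Substituting back into the two or three remaining nontrivial equations yields relations of the shape $C = (B\delta\delta^\sigma)^{q^{n-1}} + (B\cdot\delta^\sigma)^{q}$-type together with a compatibility constraint that, after eliminating $B$ and $C$, becomes a norm condition on $\delta$ — concretely something forcing $\N(\delta)$ to equal a root of unity or a specific element, contradicting that $\delta$ generates $\F_{q^n}^*$. Since $U_f$ and $U_{\hat f}$ both define $L_f$ but lie on different $\mathrm{\Gamma L}(2,q^n)$-orbits, $L_f$ is not simple (equivalently, it has $\mathrm{\Gamma L}$-class at least $2$).

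The main obstacle I anticipate is the bookkeeping in the index reduction of Proposition~\ref{eqn0}'s system: one has to track carefully, for each $m\in\{0,\dots,n-1\}$, which of the four "slots" ($i=1$ or $i=n-1$ on the right; $m=n-1$ or $m=1$ on the left, roughly) actually contributes, and the answer genuinely depends on $n\bmod$ small numbers — this is exactly why the statement excludes $n\le 4$ (and presumably why $n=5,6$ might need a separate sanity check, though the Lunardon--Polverino $L_f$ of pseudoregulus type is already known to be non-simple for $n=5$ and $n>6$). Keeping the Frobenius exponents straight while eliminating $A,B,C,D$ and isolating the final norm obstruction is where the real care is needed; the structural skeleton above is otherwise routine.
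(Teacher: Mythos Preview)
Your overall strategy is exactly the paper's: show $L_f=L_{\hat f}$ via Lemma~\ref{lem2}, then use Proposition~\ref{eqn0} to rule out any $\mathrm{\Gamma L}(2,q^n)$-equivalence between $U_f$ and $U_{\hat f}$. The gap is in the reduction of the system.

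When you specialize Proposition~\ref{eqn0} to $a_1=\delta$, $a_{n-1}=1$, $a_i=0$ otherwise, the ``empty'' equations (those with $2\le m\le n-2$ on the left) have \emph{left-hand side equal to zero}, because both $a_m$ and $a_{n-m}$ vanish there; they do not contain a lone term in $A$ or $D$. So nothing forces $A=D=0$. What actually happens is the opposite: for $m=n-2$ (and also $m=2$) the right-hand side reduces to a single term $(B\delta)^{q^{n-1}}$ (respectively $(B\delta^\sigma)^q$), whence $B=0$; the $m=0$ equation then gives $C=0$. Thus $AD-BC\neq 0$ forces $A,D\neq 0$, and the two remaining equations $D\delta^\sigma=A^q$ and $D=(\delta A)^{q^{n-1}}$ combine (after a $q$-th power) to
\[
\delta^{\sigma q+1}=A^{q^2-1}.
\]
Raising to the power $(q^n-1)/(q-1)$ kills the right-hand side and gives $\delta^{(\sigma q+1)(q^n-1)/(q-1)}=1$. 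For a generator $\delta$ this forces $(q-1)\mid \sigma q+1$, i.e.\ $(q-1)\mid p^k+1$; since $\gcd(p^k+1,q-1)\in\{1,2,p^{\gcd(k,e)}+1\}$ (with $q=p^e$), this is impossible for $q>4$. That is where the hypothesis $q>4$ is actually used, not merely to ensure $\N(\delta)\neq 1$.

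Two further remarks: the hypothesis $n>4$ is needed only to ensure $2,\,n-2\notin\{1,n-1\}$, so that the $B$-killing equations really have zero left-hand side; no separate treatment of $n=5,6$ is required. And the final obstruction is not a norm condition on $\delta$ but the divisibility statement above.
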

\begin{proof}
Lemma \ref{lem2} yields $L_f=L_{\hat f}$ thus it is enough to show
the existence of $\delta$ such that there is no $\F_{q^n}$-semilinear map between $U_f$ and $U_{\hat f}$.
In the equations of Proposition \ref{eqn0} we have $a_1=\delta$, $a_{n-1}=1$ and $a_0=a_2=\ldots=a_{n-2}=0$, thus
\[C =(B\delta^{\sigma+1})^{q^{n-1}}+B^q,\]
\[D\delta^{\sigma}-A^q=0,\]
\[0=(B\delta)^{q^{n-1}},\]
\[D-(\delta A)^{q^{n-1}}=0,\]
where $\sigma=p^k$ for some integer $k$.
If there is a solution, then $B=C=0$ and $(\delta A)^{q^{n-1}}\delta^{\sigma}=A^q$.
Taking $q$-th powers on both sides yield
\begin{equation}
\label{eq}
\delta^{\sigma q +1}=A^{q^2-1}
\end{equation}
and hence
\begin{equation}
\label{delta}
\delta^\frac{(\sigma q +1)(q^n-1)}{q-1}=1.
\end{equation}
For each $\sigma$ let $G_\sigma$ be the set of elements $\delta$ of $\F_{q^n}$ satisfying \eqref{delta}.
For each $\sigma$, $G_\sigma$ is a subgroup of the multiplicative group $M$ of $\F_{q^n}$.
We show that these are proper subgroups of $M$.
We have $G_{p^k}=M$ if and only if $q^n-1$ divides $\frac{(p^k q +1)(q^n-1)}{q-1}$, i.e. when
$q-1$ divides $p^k q +1$.
Since $\gcd(p^w+1,p^v-1)$ is always 1,2, or $p^{\gcd(w,v)}+1$,
it  follows that for $q>4$ we cannot have $q-1$ as a divisor of $p^k q+1$.

It follows that for any generator $\delta$ of $M$ we have $\delta \notin \cup_{j} G_{p^j}$
and hence $\delta^{\sigma q +1}\neq A^{q^2-1}$ for each $\sigma$ and for each $A$.
\end{proof}

\begin{remark}
If $q=4$, then \eqref{eq} with $k=2(n-1)+1$ asks for the solution of $\delta^3=A^{15}$. When $5$ does not divide $4^n-1$, then
$\{x^3 \colon x\in \F_{4^n}\}=\{x^{15} \colon x\in \F_{4^n}\}$ and hence for each $\delta$ there exists $A$ such that
$\delta^3=A^{15}$.

If $q=3$, then \eqref{eq} with $k=n-1$ asks for the solution of $\delta^2=A^8$. When $4$ does not divide $3^n-1$, then
$\{x^2 \colon x\in \F_{3^n}\}=\{x^{8} \colon x\in \F_{3^n}\}$ and hence for each $\delta$ there exists $A$ such that
$\delta^2=A^8$.

If $q=2$, then \eqref{eq} with $k=0$ asks for the solution of $\delta^3=A^3$. This equation always has a solution.
\end{remark}

\section{\texorpdfstring{Linear sets of rank 4 of $\mathrm{PG}(1,q^4)$}{Linear sets of rank 4 of PG(1,q4)}}
\label{n=4}

$\F_q$-linear sets of rank two of $\PG(1,q^2)$ are the Baer sublines, which are equivalent.
As we have mentioned in the introduction, subgeometries are simple linear sets, in fact they have $\ZG$-class one (cf. \cite[Theorem 2.6]{LV2013} and \cite[Section 25.5]{JWPH3}).
There are two non-equivalent $\F_q$-linear sets of rank 3 of $\PG(1,q^3)$, the linear sets of size $q^2+q+1$ and those of size $q^2+1$.
Linear sets in both families are equivalent, since the stabilizer of a $q$-order subgeometry $\Sigma$ of $\Sigma^*=\PG(2,q^3)$ is transitive on the set of those points of $\Sigma^*\setminus \Sigma$ which are incident with a line of $\Sigma$ and on the set of points of $\Sigma^*$ not incident with any line of $\Sigma$ (cf. Section \ref{subproj} and \cite{LaVa2010}). 
In the first case we have the linear sets of pseudoregulus type with $\G$-class 1 and $\ZG$-class 2 (cf. Remark \ref{nonsimpleex} and Example \ref{pseudoZG}). In the second case we have the linear sets defined by $\Tr_{q^3/q}$ with $\G$-class and $\ZG$-class 1 (cf. Theorem \ref{THMtrace}, see also \cite[Corollary 6]{FSz2008}).

The main result of this section is that each $\F_q$-linear set of rank 4 of $\PG(1,q^4)$, with maximum field of linearity $\F_q$, is simple (cf. Theorem \ref{n4simple}).

\subsection{Subspaces defining the same linear set}

\begin{lemma}
\label{mainLEMMA2}
Let $f(x)=\sum_{i=0}^{3}a_i x^{q^i}$ and $g(x)=\sum_{i=0}^{3}b_i x^{q^i}$ be two $q$-polynomials over $\F_{q^4}$, such that $L_f=L_g$.
Then
\[\N(a_1)+\N(a_2)+\N(a_3)+a_1^{1+q^2}a_3^{q+q^3}+a_1^{q+q^3}a_3^{1+q^2}+\Tr{q^4/q}\left(a_1a_2^{q+q^2}a_3^{q^3}\right)=\]
\[\N(b_1)+\N(b_2)+\N(b_3)+b_1^{1+q^2}b_3^{q+q^3}+b_1^{q+q^3}b_3^{1+q^2}+\Tr{q^4/q}\left(b_1b_2^{q+q^2}b_3^{q^3}\right).\]
\end{lemma}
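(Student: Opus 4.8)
The plan is to run the same power-sum machinery used in Lemma \ref{mainLEMMA}, pushing it to one higher degree. Concretely, by Lemma \ref{lmain} the hypothesis $L_f=L_g$ gives
\[
\sum_{x\in \F_{q^4}^*}\left(\frac{f(x)}{x}\right)^d=\sum_{x\in \F_{q^4}^*}\left(\frac{g(x)}{x}\right)^d
\]
for every positive integer $d$, and by Lemma \ref{lfolk} a monomial $x^e$ contributes $-1$ exactly when $q^4-1\mid e$ and $0$ otherwise. Expanding $(f(x)/x)^d=\sum a_{i_1}a_{i_2}^{q^{k_2}}\cdots x^{(q^{i_1}-1)+(q^{i_2+k_2}-q^{k_2})+\cdots}$ for a suitable symmetric choice of $d$ as a sum of four powers of $q$, the claimed identity should appear as the ``diagonal'' contribution once all surviving exponent tuples are enumerated modulo $q^4-1$. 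The natural choice, mirroring the progression $d=1,\ q^k+1,\ q^k+q+1$ in Lemma \ref{mainLEMMA}, is $d=1+q+q^2+q^3=(q^4-1)/(q-1)$, i.e. the norm exponent; with $n=4$ this is the first degree at which all three ``types'' of terms in the asserted formula — the norms $\N(a_i)$, the mixed quartics $a_1^{1+q^2}a_3^{q+q^3}$ and its conjugate, and the trace term $\Tr_{q^4/q}(a_1a_2^{q+q^2}a_3^{q^3})$ — can simultaneously occur.

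First I would write out $(f(x)/x)^{1+q+q^2+q^3}=\sum_{i_0,i_1,i_2,i_3} a_{i_0}a_{i_1}^{q}a_{i_2}^{q^2}a_{i_3}^{q^3}\, x^{E}$ where the exponent is
\[
E=(q^{i_0}-1)+(q^{i_1+1}-q)+(q^{i_2+2}-q^2)+(q^{i_3+3}-q^3)
\]
with each $i_j\in\{0,1,2,3\}$. After collecting, $x^E$ survives the sum over $\F_{q^4}^*$ precisely when $E\equiv 0\pmod{q^4-1}$, i.e. when $q^{i_0}+q^{i_1+1}+q^{i_2+2}+q^{i_3+3}\equiv 1+q+q^2+q^3\pmod{q^4-1}$. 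Reducing each exponent modulo $4$ and using the uniqueness of base-$q$ representations (bounding the sum of four powers of $q$, each at most $q^3$, against $1+q+q^2+q^3+k(q^4-1)$ for small $k$, exactly as in the proof of Lemma \ref{mainLEMMA}), I would enumerate all solution tuples $(i_0,i_1,i_2,i_3)$. I expect the surviving tuples to split into: (i) the "identity-type" tuples where $\{i_0,i_1+1,i_2+2,i_3+3\}\equiv\{0,1,2,3\}$ as a multiset mod $4$, which produce the norm terms $\N(a_1),\N(a_2),\N(a_3)$ (and $\N(a_0)=1$, contributing equally on both sides, hence harmless); (ii) tuples producing $a_1^{1+q^2}a_3^{q+q^3}$ and its $q$-conjugate $a_1^{q+q^3}a_3^{1+q^2}$; and (iii) tuples producing the four Galois conjugates summing to $\Tr_{q^4/q}(a_1a_2^{q+q^2}a_3^{q^3})$. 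Since the same enumeration applies verbatim to $g$, equating the two sides and cancelling the $\N(a_0)$ contribution and the overall sign yields the stated identity.

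The main obstacle will be the bookkeeping in the exponent enumeration: unlike the $d=q^k+q+1$ case there are now four indices ranging over $\{0,1,2,3\}$, so $256$ a priori tuples, and one must carefully track which of the finitely many ``carry levels'' $k$ (i.e. $E=1+q+q^2+q^3+k(q^4-1)$) can occur and, for each surviving tuple, which monomial $a_{i_0}a_{i_1}^{q}a_{i_2}^{q^2}a_{i_3}^{q^3}$ it yields after normalising exponents cyclically. A subtlety worth flagging: terms like $a_2^{1+q+q^2+q^3}=\N(a_2)$ arise from several distinct tuples (the index $2$ can sit in any of the four slots as long as the shifts align), so one must confirm the multiplicities match on the two sides — they automatically do, because the enumeration depends only on the exponent arithmetic, not on $f$ versus $g$. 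One should also double-check the low-weight tuples (those involving $i_j=0$, since $q^0-1=0$ removes a term and can create accidental coincidences with $a_0$); because we are comparing $f$ and $g$ symmetrically and $a_0=b_0$ is already known from Lemma \ref{mainLEMMA}, any such $a_0$-dependent contributions cancel. Once the tuple list is fixed the rest is formal, so the proof reduces to a single (somewhat lengthy but entirely mechanical) verification that the degree-$(1+q+q^2+q^3)$ power sum equals, up to sign, the asserted symmetric function of the coefficients.
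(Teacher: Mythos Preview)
Your plan is exactly the paper's: apply Lemma~\ref{lmain} with $d=1+q+q^2+q^3$, expand $(f(x)/x)^d$, and use Lemma~\ref{lfolk} to enumerate the tuples $(i_0,i_1,i_2,i_3)$ for which $q^{i_0}+q^{i_1+1}+q^{i_2+2}+q^{i_3+3}\equiv 1+q+q^2+q^3\pmod{q^4-1}$. The one place your outline needs tightening is the treatment of $a_0$. The paper does not carry $a_0$ through the enumeration; it first reduces to $a_0=b_0=0$ via the projectivity already used in the proof of Lemma~\ref{mainLEMMA} (the identity to be proved involves only $a_1,a_2,a_3$, so this reduction costs nothing), which restricts all indices to $\{1,2,3\}$ and cuts the case count from $256$ to $81$. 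Your proposed alternative---asserting that the $a_0$-dependent contributions cancel ``because $a_0=b_0$''---is not correct as stated: genuine cross terms survive. For instance $(i_0,i_1,i_2,i_3)=(0,1,1,2)$ satisfies $1+q^2+q^3+q^5\equiv 1+q+q^2+q^3\pmod{q^4-1}$ and contributes $a_0\,a_1^{q+q^2}a_2^{q^3}$; matching this with $b_0\,b_1^{q+q^2}b_2^{q^3}$ would require $a_1^{q+q^2}a_2^{q^3}=b_1^{q+q^2}b_2^{q^3}$, which is not among the relations already established. Performing the reduction to $a_0=b_0=0$ first eliminates all such terms, after which your enumeration is precisely the paper's proof.
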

\begin{proof}
We are going to follow the proof of Lemma \ref{mainLEMMA}. As in that proof, we may assume $a_0=b_0=0$.
In Lemma \ref{lmain} take $d=1+q+q^2+q^3$.
We obtain
\[\sum_{1\leq i,j,k,m \leq 3} a_ia_j^qa_k^{q^2}a_m^{q^3} \sum_{x\in \F_{q^4}^*} x^{q^i-1+q^{j+1}-q+q^{k+2}-q^2+q^{m+3}-q^3}=\]
\[\sum_{1\leq i,j,k,m \leq 3} b_ib_j^qb_k^{q^2}b_m^{q^3} \sum_{x\in \F_{q^4}^*} x^{q^i-1+q^{j+1}-q+q^{k+2}-q^2+q^{m+3}-q^3}.\]
$\sum_{x\in \F_{q^4}^*} x^{q^i-1+q^{j+1}-q+q^{k+2}-q^2+q^{m+3}-q^3}=-1$ if and only if
\[q^i+q^{j+1}+q^{k+2}+q^{m+3}\equiv q^i+q^{j+1}+q^{k+2}+q^{m-1} \equiv 1+q+q^2+q^3 \pmod {q^4-1},\]
and zero otherwise. Suppose that the former case holds.

First consider $k=1$.
Then $q^i+q^{j+1}+q^{k+2}+q^{m-1} \leq q^3+q^4+q^3+q^2<1+q+q^2+q^3+2(q^4-1)$ hence one of the following holds.
    \begin{itemize}
        \item If $q^i+q^{j+1}+q^{k+2}+q^{m-1}= 1+q+q^2+q^3$, then $m=i=j=k=1$.
        \item If $q^i+q^{j+1}+q^{k+2}+q^{m-1}= 1+q+q^2+q^3+q^4-1=q+q^2+q^3+q^4$, then $\{i,j+1,k+2,m-1\}=\{1,2,3,4\}$, hence one of the following holds
            \[i=1,\, j=3,\, k=1,\,m=3 ,\]
            \[i=2,\, j=3,\, k=1,\,m=2 .\]
     \end{itemize}

Now consider the case $k \geq 2$.
Then $q^i+q^{j+1}+q^{k+2}+q^{m-1} \equiv q^i+q^{j+1}+q^{k-2}+q^{m-1}\leq q^3+q^4+q+q^2<1+q+q^2+q^3+2(q^4-1)$ hence one of the following holds.
    \begin{itemize}
        \item If $q^i+q^{j+1}+q^{k-2}+q^{m-1}= 1+q+q^2+q^3$, then $\{i,j+1,k-2,m-1\}=\{0,1,2,3\}$, hence one of the following holds
            \[i=1,\, j=2,\, k=2,\,m=3 ,\]
            \[i=2,\, j=2,\, k=2,\,m=2 ,\]
            \[i=2,\, j=2,\, k=3,\,m=1 ,\]
            \[i=3,\, j=1,\, k=2,\,m=2 ,\]
            \[i=3,\, j=1,\, k=3,\,m=1 .\]
        \item If $q^i+q^{j+1}+q^{k-2}+q^{m-1}= 1+q+q^2+q^3+q^4-1=q+q^2+q^3+q^4$, then $i=j=k=m=3$.
     \end{itemize}
\end{proof}

\begin{proposition}
\label{prop:n4}
Let $f(x)$ and $g(x)$ be two $q$-polynomials over $\F_{q^4}$ such that $L_f=L_g$.
If the maximum field of linearity of $f$ is $\F_q$, then
\[g(x)=f(\lambda x)/\lambda,\]
or
\[g(x)=\hat{f}(\lambda x)/\lambda.\]
\end{proposition}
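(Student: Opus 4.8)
The goal is to classify, up to the natural symmetries, the $q$-polynomials $g$ over $\F_{q^4}$ with $L_g=L_f$ when $f$ has maximum field of linearity $\F_q$. The strategy is to squeeze $g$ between the constraints already established: Lemma \ref{mainLEMMA} gives $b_0=a_0$, the three equations $b_kb_{4-k}^{q^k}=a_ka_{4-k}^{q^k}$ for $k=1,2,3$ (note $k=2$ gives $b_2^{1+q^2}=a_2^{1+q^2}$), and for $k=2,3$ the mixed cubic relations \eqref{e2}; Lemma \ref{mainLEMMA2} gives the extra degree-$(1+q+q^2+q^3)$ symmetric relation. First I would normalize: using Lemma \ref{lem3} (i.e.\ replacing $f$ by $f_\lambda$, which changes $a_i\mapsto a_i\lambda^{q^i-1}$ and preserves $L_f$ and the maximum field of linearity) together with the adjoint symmetry of Lemma \ref{lem2}, I would reduce to a convenient normal form for $f$ — handling separately the degenerate subcases where some $a_i$ vanish. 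The point of the normalization is that the target equalities ``$g(x)=f(\lambda x)/\lambda$ or $g(x)=\hat f(\lambda x)/\lambda$'' become, after passing to the normal form, the statement ``$g=f$ or $g=\hat f$'' up to a residual scaling by a $\lambda$ with $\N(\lambda)=1$.

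The core of the argument is then a finite elimination. From $b_0=a_0$ we may assume $a_0=b_0=0$ as in the proof of Lemma \ref{mainLEMMA}. The relations from Lemma \ref{mainLEMMA} already pin down the ``products'' $b_1b_3^q$, $b_2^{1+q^2}$, $b_3b_1^{q^3}$ in terms of the $a_i$; what is missing is the individual values, and this is exactly the freedom that the adjoint $\hat f$ and the scaling $f_\lambda$ are meant to absorb. I would treat the generic case $a_1a_2a_3\neq 0$ first: write $b_1=\mu a_1$ and deduce $b_3=\mu^{-q^3}a_3$ from \eqref{e1} with $k=1$, express $b_2$ via $b_2^{1+q^2}=a_2^{1+q^2}$ (so $b_2=\nu a_2$ with $\nu^{1+q^2}=1$), then substitute into the $k=2,3$ cases of \eqref{e2} and into the Lemma \ref{mainLEMMA2} identity. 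The resulting system in $\mu,\nu$ over $\F_{q^4}$ should force $\nu=\mu^{1-q}$ (giving $g=f_\mu$, i.e.\ $g(x)=f(\mu^{?}x)/\mu^{?}$ after untwisting) or the ``flipped'' solution corresponding to $\hat f$; one checks $\N(\mu)=1$ falls out, as in the proof of Theorem \ref{THMtrace}.

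The main obstacle I anticipate is the case analysis when coefficients vanish, and making sure the mixed relation from Lemma \ref{mainLEMMA2} is genuinely independent of those from Lemma \ref{mainLEMMA} (otherwise the $\mu,\nu$ system is underdetermined and one cannot conclude). There is a real danger that in some thin subcase — say $a_2=0$ but $a_1,a_3\neq 0$, or $a_1=0$ — the hypothesis ``maximum field of linearity $\F_q$'' must be invoked directly (via Proposition \ref{rankisgood} and the bound \eqref{dirp}) to rule out spurious $g$'s coming from a larger field of linearity; I would use that hypothesis precisely to exclude those. Once the generic case and the finitely many degenerate subcases each yield only the two prescribed families, one reassembles the normalization to state the result in the form ``$g(x)=f(\lambda x)/\lambda$ or $g(x)=\hat f(\lambda x)/\lambda$'' for a suitable $\lambda\in\F_{q^4}^*$, which is exactly the claim.
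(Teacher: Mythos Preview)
Your plan is essentially the paper's own strategy: reduce to $a_0=b_0=0$, extract from Lemma~\ref{mainLEMMA} the relations $b_1b_3^q=a_1a_3^q$, $b_2^{1+q^2}=a_2^{1+q^2}$ and the cubic relation~\eqref{e2}, invoke Lemma~\ref{mainLEMMA2} for the degenerate cases, and then eliminate. The only tactical difference is how the elimination is organised in the generic case. The paper does not parametrise by $(\mu,\nu)$; instead it multiplies the $k=2$ instance of~\eqref{e2} by $b_2$, uses $b_2^{1+q^2}=a_2^{1+q^2}$ to turn the result into a genuine quadratic in $b_2$ with coefficients in the $a_i$ and $b_1,b_3$, and then reads off the two roots $b_2=a_1^{q+1}a_2^{q^2}/b_3^{q+q^2}$ and $b_2=a_2a_3^{q+q^2}/b_3^{q+q^2}$, each of which forces a norm equality ($\N(b_3)=\N(a_1)$ or $\N(b_3)=\N(a_3)$) and hence, via Hilbert~90, the two desired forms for $g$. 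This is cleaner than a generic $(\mu,\nu)$ elimination and never needs the $k=3$ case of~\eqref{e2}; Lemma~\ref{mainLEMMA2} is used only when $b_2=0$ (to separate $\N(a_1)$ from $\N(a_3)$), exactly where you anticipated needing an extra relation.

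Two small corrections to your sketch. First, in your parametrisation the identity you want in the ``$f_\lambda$'' branch is $\nu=\mu^{1+q}$, not $\mu^{1-q}$ (since $\lambda^{q^2-1}=(\lambda^{q-1})^{q+1}$). Second, the hypothesis ``maximum field of linearity $\F_q$'' is not used via Proposition~\ref{rankisgood} or~\eqref{dirp} in the elimination; the paper invokes it only to exclude $b_1=b_2=b_3=0$ and $b_1=b_3=0$, $b_2\neq 0$ (where $g$ would be $\F_{q^4}$- or $\F_{q^2}$-linear), and to transfer the hypothesis from $f$ to $g$ at the outset.
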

\begin{proof}
By Proposition \ref{rankisgood}, the maximum field of linearity of $g$ is also $\F_q$.
First note that $L_g=L_f$ when $g$ is as in the assertion (cf. Lemmas \ref{lem2} and \ref{lem3}).
Let $f(x)=\sum_{i=0}^3a_ix^{q^i}$ and $g(x)=\sum_{i=0}^3b_ix^{q^i}$.

First we are going to use Lemma \ref{mainLEMMA}. From \eqref{a0ba0} we have $a_0=b_0$. From \eqref{e1} with $n=4$ and $k=1,2$ we have $a_1a_3^q=b_1b_3^q$ and $a_2^{1+q^2}=b_2^{1+q^2}$, respectively. From \eqref{e2} with $n=4$ and $k=2$ we obtain
\begin{equation}
\label{long}
a_1^{q+1}a_2^{q^2}+a_2a_3^{q+q^2}=b_1^{q+1}b_2^{q^2}+b_2b_3^{q+q^2}.
\end{equation}

Note that $a_1a_3^q=b_1b_3^q$ implies
\begin{equation}
\label{prodnorms}
\N(b_1)\N(b_3)=\N(a_1)\N(a_3).
\end{equation}

Multiplying \eqref{long} by $b_2$ and applying $a_2^{1+q^2}=b_2^{1+q^2}$ yields:
\begin{equation}
\label{long2}
b_2^2b_3^{q^2+q}-b_2(a_1^{q+1}a_2^{q^2}+a_2a_3^{q^2+q})+b_1^{q+1}a_2^{q^2+1}=0.
\end{equation}

First suppose $b_1b_2b_3\neq 0$. Then \eqref{long2} is a second degree polynomial in $b_2$.
Applying $a_1a_3^q=b_1b_3^q$ it is easy to see that the roots of \eqref{long2} are
\[b_{2,1}=\frac{a_1^{q+1}a_2^{q^2}}{b_3^{q^2+q}},\]
\[b_{2,2}=\frac{a_2a_3^{q^2+q}}{b_3^{q^2+q}}.\]

First we consider $b_2=b_{2,1}$. Then $a_2^{1+q^2}=b_2^{1+q^2}$ yields $\N(a_1)=\N(b_3)$ and hence $\N(b_1)=\N(a_3)$.
In particular, $\N(b_1/a_3^q)=1$ and hence $b_1=a_3^q\lambda^{q-1}$ for some $\lambda\in \F_{q^4}^*$.
From $a_1a_3^q=b_1b_3^q$ we obtain $b_3=a_1^{q^3}a_3/b_1^{q^3}=a_1^{q^3}\lambda^{q^3-1}$.
Applying this we get $b_2=a_1^{q+1}a_2^{q^2}/b_3^{q^2+q}=a_2^{q^2}\lambda^{q^2-1}$ and hence
\[g(x)=a_0x+a_3^q\lambda^{q-1}x^q+a_2^{q^2}\lambda^{q^2-1}x^{q^2}+a_1^{q^3}\lambda^{q^3-1}x^{q^3}.\]
as we claimed.

Now consider $b_2=b_{2,2}$. Then $a_2^{1+q^2}=b_2^{1+q^2}$ yields $\N(a_3)=\N(b_3)$ and hence $\N(a_1)=\N(b_1)$.
Hence $b_1=a_1\lambda^{q-1}$ for some $\lambda\in \F_{q^4}^*$.
From $a_1a_3^q=b_1b_3^q$ we obtain $b_3=a_1^{q^3}a_3/b_1^{q^3}=a_3\lambda^{q^3-1}$.
Applying this we obtain $b_2=a_2a_3^{q^2+q}/b_3^{q^2+q}=a_2\lambda^{q^2-1}$ and hence
\[g(x)=a_0x+a_1 \lambda^{q-1}x^q+a_2^{q^2}\lambda^{q^2-1}x^{q^2}+a_3^{q^3}\lambda^{q^3-1}x^{q^3}.\]

If $b_1=b_3=0$, then either $b_2=0$ and the maximum field of linearity of $g(x)$ is $\F_{q^4}$, or $b_2\neq 0$ and the maximum field of linearity of $g(x)$ is $\F_{q^2}$. Thus we may assume $b_1\neq 0$ or $b_3\neq 0$.

First assume $b_2\neq 0$ and $b_1=0$. Then $b_3\neq 0$ and \eqref{long2} gives
\[b_2b_3^{q^2+q}=a_1^{q+1}a_2^{q^2}+a_2a_3^{q^2+q}.\]

Then $a_1a_3^q=b_1b_3^q$ yields either $a_1=0$ and $b_2b_3^{q^2+q}=a_2a_3^{q^2+q}$, or $a_3=0$ and $b_2b_3^{q^2+q}=a_1^{q+1}a_2^{q^2}$. Taking $(q^2+1)$-powers on both sides gives $b_2^{q^2+1}\N(b_3)=a_2^{q^2+1}\N(a_3)$, or $b_2^{q^2+1}\N(b_3)=\N(a_1)a_2^{q^2+1}$, respectively. Applying $b_2^{q^2+1}=a_2^{q^2+1}$ we get
$\N(b_3)=\N(a_3)$, or $\N(b_3)=\N(a_1)$, respectively. Note that the set of elements with norm 1 in $\F_{q^4}$ is $\{x^{q^3-1} \colon x\in \F_{q^4}^*\}$, thus
in the first case there exists $\lambda \in \F_{q^4}^*$ such that $b_3=a_3 \lambda^{q^3-1}$. Then $b_2b_3^{q^2+q}=a_2a_3^{q^2+q}$ yields $b_2=a_2\lambda^{q^2-1}$ and hence $g(x)=a_0x+a_2\lambda^{q^2-1}x^{q^2}+a_3\lambda^{q^3-1}x^{q^3}$.
In the second case the same reasoning yields $g(x)=a_0x+a_2^{q^2}\lambda^{q^2-1}x^{q^2}+a_1^{q^3}\lambda^{q^3-1}x^{q^3}$.

If $b_2\neq 0$ and $b_3=0$, then the coefficient of $x^q$ in $\hat{g}(x)$ is zero and the assertion follows from the above arguments applied to $\hat{g}$ instead of $g$.

Now assume $b_2=0$ and $b_1b_3=0$. Then $L_g=L_f$ is a linear set of pseudoregulus type and hence the assertion also follows from \cite{LaShZa2013}.
For the sake of completeness we present a proof also in this case. Equation $b_2^{q^2+1}=a_2^{q^2+1}$ yields $a_2=0$ and equation $a_1a_3^q=b_1b_3^q$ yields $a_1a_3=0$. Then from Lemma \ref{mainLEMMA2} we have
\begin{equation}
\label{sumnorms}
\N(a_1)+\N(a_3)=\N(b_1)+\N(b_3).
\end{equation}
If $b_1=0$, then $b_3\neq 0$ and either $a_1=0$ and $\N(a_3)=\N(b_3)$, or $a_3=0$ and $\N(a_1)=\N(b_3)$.
In the first case $g(x)=a_0x+a_3\lambda^{q^3-1}x^{q^3}$, in the second case $g(x)=a_0x+a_1^q \lambda^{q^3-1}x^{q^3}$.
If $b_3=0$, then $b_1\neq 0$ and either $a_1=0$ and $\N(a_3)=\N(b_1)$, or $a_3=0$ and $\N(a_1)=\N(b_1)$.
In the first case $g(x)=a_0x+a_3^q\lambda^{q-1}x^q$, in the second case $g(x)=a_0x+a_1\lambda^{q-1}x^q$.

There is only one case left, when $b_2=0$ and $b_1b_3\neq 0$.
Then from Lemma \ref{mainLEMMA2} and from $a_1a_3^q=b_1b_3^q$ it follows that
\begin{equation}
\label{sumnorms2}
\N(a_1)+\N(a_3)
=\N(b_1)+\N(b_3).
\end{equation}
Together with \eqref{prodnorms} it follows that either $\N(a_1)=\N(b_1)$ and $\N(a_3)=\N(b_3)$, or
$\N(a_1)=\N(b_3)$ and $\N(a_3)=\N(b_1)$. In the first case
$g(x)=a_0x+a_1\lambda^{q-1}x^q+a_3\lambda^{q^3-1}x^{q^3}$,
in the second case
$g(x)=a_0x+a_3^q\lambda^{q-1}x^q+a_1^{q^3}\lambda^{q^3-1}x^{q^3}$, for some $\lambda\in \F_{q^4}^*$.
\end{proof}

Now we are able to prove the following.

\begin{theorem}
Let $L_{U}$ be an $\F_q$--linear set of a line $\PG(W,\F_{q^4})$ of rank $4$, with maximum field of linearity $\F_q$, and let $\beta$ be a non--degenerate alternating form of $W$. If $V$ is an $\F_q$--vector subspace of $W$ such that $L_U=L_V$, then either
\[V=\mu U,\]
or
\[V=\mu U^{\perp_\beta'},\]
for some $\mu\in\F_{q^4}^*$, where $\perp_\beta'$ is the orthogonal complement map induced by $\Tr_{q^4/q}\circ \beta$ on the lattice of the $\F_q$--subspaces of $W$.
\end{theorem}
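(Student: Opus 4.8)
The plan is to reduce the theorem to Proposition~\ref{prop:n4}, which carries out the substantive computation; what remains is to translate its conclusion --- phrased in terms of $q$-polynomials --- back into the language of $\F_q$-subspaces, after two routine normalisations, and to check that the symplectic dual behaves well under these normalisations.

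\textbf{Normalising the form.} Since $W$ is $2$-dimensional over $\F_{q^4}$, any two non-degenerate alternating bilinear forms of $W$ are proportional. If $\beta=c\eta$ for some $c\in\F_{q^4}^*$, where $\eta$ and $\perp'$ are as in \eqref{form:hatf}, then the $\F_{q^4}$-bilinearity of $\eta$ gives $\Tr_{q^4/q}(\beta(u,w))=\Tr_{q^4/q}(\eta(u,cw))$ for all $u,w\in W$, whence $U^{\perp_\beta'}=c^{-1}U^{\perp'}$. A scalar factor is absorbed into $\mu$, so it is enough to treat the case $\beta=\eta$.

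\textbf{Normalising the linear set.} Next I would reduce to the case $\langle(0,1)\rangle_{\F_{q^4}}\notin L_U$. If this fails, choose $h\in\GL(2,q^4)$ with $\langle(0,1)\rangle_{\F_{q^4}}\notin L_{U^h}=L_U^{\varphi_h}$ (possible since $|L_U|<q^4+1$ and $\PGL(2,q^4)$ is transitive on points) and replace $U,V$ by $U^h,V^h$; then $L_{U^h}=L_{V^h}$ and, by Proposition~\ref{invar0}, the maximum field of linearity is still $\F_q$. A direct check shows $(U^h)^{\perp'}=(U^{\perp'_{\eta^h}})^h$, where $\eta^h(u,v):=\eta(u^h,v^h)$ is again a non-degenerate alternating form of $W$, hence $\eta^h=c'\eta$ for some $c'\in\F_{q^4}^*$ and $(U^h)^{\perp'}=c'^{-1}(U^{\perp'})^h$. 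Thus once the conclusion is known for $U^h,V^h$, applying $h^{-1}$ and absorbing $c'$ into $\mu$ yields it for $U,V$. From now on $U=U_f$ and $V=U_g$ for $q$-polynomials $f,g$ over $\F_{q^4}$, and the maximum field of linearity of $f$ is $\F_q$.

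\textbf{Applying Proposition~\ref{prop:n4}.} Since $L_f=L_g$ and $f$ has maximum field of linearity $\F_q$, Proposition~\ref{prop:n4} gives $g(x)=f(\lambda x)/\lambda$ or $g(x)=\hat f(\lambda x)/\lambda$ for some $\lambda\in\F_{q^4}^*$. In the first case the proof of Lemma~\ref{lem3} shows $\lambda U_g=U_f$, so $V=U_g=\lambda^{-1}U_f=\mu U$ with $\mu=\lambda^{-1}$. In the second case the same argument applied to $\hat f$ gives $V=U_g=\lambda^{-1}U_{\hat f}$, and $U_{\hat f}=U_f^{\perp'}$ by \eqref{form:hatf}, so $V=\lambda^{-1}U^{\perp'}=\mu U^{\perp_\beta'}$ with $\mu=\lambda^{-1}$ (recall $\beta=\eta$). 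This proves the theorem. The genuine difficulty lies entirely in Proposition~\ref{prop:n4}; the only point here that requires care is tracking the symplectic dual through the two normalisations, but in each step the discrepancy is merely a nonzero scalar and is harmless.
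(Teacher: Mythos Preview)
Your proof is correct and follows essentially the same approach as the paper: reduce via Proposition~\ref{prop:n4}, normalise so that $\langle(0,1)\rangle_{\F_{q^4}}\notin L_U$, and use that any two non-degenerate alternating forms on $W$ differ by a scalar to track the symplectic dual through the normalisation. The only cosmetic difference is ordering --- you normalise the form first and the linear set second, while the paper moves the linear set first and absorbs the form discrepancy at the end --- but the content is the same.
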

\begin{proof}
First of all, observe that if $\beta_1$ is another non--degenerate alternating form of $W$ and $\perp_{\beta_1}'$ is the corresponding orthogonal complement
map induced on the lattice the $\F_{q}$-subspaces of $W$, direct computations show that there exists $a\in\F_{q^n}^*$ such that $\beta_1=a\beta$ and for each $\F_q$--vector subspace $S$ of $W$ we get $S^{\perp_{\beta}'}=a S^{\perp_{\beta_1}'}$.

Let $\phi$ be the collineation of $\PG(W,\F_{q^4})$ such that $L_U^\phi$ does not contain the point $\langle (0,1)\rangle_{\F_{q^4}}$. Then $L_{U^{\varphi}}=L_{V^{\varphi}}$, where $\varphi$ is the invertible $\F_{q^4}$-semilinear map of $W$ inducing $\phi$, and $\sigma$ is the associated field automorphism. Also, $U^{\varphi}=U_f$ and $V^{\varphi}=V_g$ for two $q$--polynomials $f$ and $g$ over $\F_{q^4}$. Since $L_f=L_g$, by Proposition \ref{prop:n4} and by Lemma \ref{lem3}, taking also (\ref{form:hatf}) into account, it follows that there exists $\lambda\in\F_{q^4}^*$ such that either $\lambda V_g=U_f$ or $\lambda V_g=U_{\hat f}=U_f^{\perp'}$, where $\perp'$ is the orthogonal complement map induced by the non-degenerate alternating form defined in (\ref{form:perp1}). In the first case we have that $V=\mu U$, where $\mu=\frac 1{\lambda^{\sigma^{-1}}}$. In the second case we have $V=\frac 1{\lambda^{\sigma^{-1}}} U^{\varphi\,\perp'\,\varphi^{-1}}$. The map ${\varphi\perp'\varphi^{-1}}$ defines the orthogonal complement map on the lattice the $\F_{q}$-subspaces of $W$ induced by another non--degenerate alternating form of $W$. As observed above, there exists $a\in\F_{q^4}^*$ such that $U^{\varphi\,\perp'\,\varphi^{-1}}=a U^{\perp_\beta'}$. The assertion follows with $\mu=\frac a{\lambda^{\sigma^{-1}}}$.
\end{proof}

\subsection{\texorpdfstring{Semilinear maps between $U_f$ and $U_{\hat{f}}$}{Semilinear maps between Uf and Uf'}}

The next result is just Proposition \ref{eqn0} with $n=4$.

\begin{corollary}
\label{eqn}
Let $f(x)=a_0x+a_1x^q+a_2x^{q^2}+a_3x^{q^3}$. There is an $\F_{q^4}$-semilinear
map between $U_f$ and $U_{\hat{f}}$ if and only if the following system of four equations has a solution $A,B,C,D \in \F_{q^4}$, $AD-BC\neq 0$, $\sigma = p^k$.
\[C + Da_0^{\sigma}-a_0A=Ba_0a_0^{\sigma}+(Ba_1a_1^{\sigma})^{q^3}+(Ba_2a_2^{\sigma})^{q^2}+(Ba_3a_3^{\sigma})^q,\]
\[Da_1^{\sigma}-(a_3A)^q=Ba_0a_1^{\sigma}+(Ba_1a_2^{\sigma})^{q^3}+(Ba_2a_3^{\sigma})^{q^2}+(Ba_3a_0^{\sigma})^q,\]
\[Da_2^{\sigma}-(a_2A)^{q^2}=Ba_0a_2^{\sigma}+(Ba_1a_3^{\sigma})^{q^3}+(Ba_2a_0^{\sigma})^{q^2}+(Ba_3a_1^{\sigma})^q,\]
\[Da_3^{\sigma}-(a_1A)^{q^3}=Ba_0a_3^{\sigma}+(Ba_1a_0^{\sigma})^{q^3}+(Ba_2a_1^{\sigma})^{q^2}+(Ba_3a_2^{\sigma})^q.\]
\end{corollary}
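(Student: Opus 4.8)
The plan is to obtain this corollary as the specialization of Proposition \ref{eqn0} to the case $n=4$, so essentially all the work has already been carried out and only a transcription is needed. First I would recall the general system from Proposition \ref{eqn0}: there is an $\F_{q^n}$-semilinear map between $U_f$ and $U_{\hat f}$ if and only if there exist $A,B,C,D\in\F_{q^n}$ with $AD-BC\neq 0$ and $\sigma=p^k$ satisfying
\[Da_m^{\sigma}-(a_{n-m}A)^{q^m}=\sum_{i=0}^{n-1}(Ba_ia_{i+m}^{\sigma})^{q^{n-i}}\]
for $m=1,2,\ldots,n-1$, together with the $m=0$ equation $C+Da_0^{\sigma}-a_0A=\sum_{i=0}^{n-1}(Ba_ia_i^{\sigma})^{q^{n-i}}$, where all indices of the $a_j$ are read modulo $n$ and $a_{n-0}=a_0$ so that $(a_{n-m}A)^{q^m}$ reduces to $a_0A$ when $m=0$.

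Next I would set $n=4$ and expand each of the four equations $m=0,1,2,3$ term by term. In the $i=0$ summand the Frobenius exponent is $q^{n-0}=q^4$, and since $x^{q^4}=x$ for every $x\in\F_{q^4}$, that term simplifies from $(Ba_0a_m^{\sigma})^{q^4}$ to $Ba_0a_m^{\sigma}$; the summands with $i=1,2,3$ carry the exponents $q^3,q^2,q^1$ respectively and are left unchanged. Finally, reducing the indices $i+m$ modulo $4$ (so $a_4=a_0$, $a_5=a_1$, $a_6=a_2$), and likewise writing $a_{4-m}$ for $m=1,2,3$ as $a_3,a_2,a_1$, yields precisely the four displayed equations in the statement of the corollary.

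There is no genuine obstacle here; the only point requiring care is the bookkeeping of the index reductions modulo $4$ and of the exponents $q^{4-i}$, together with the identification $x^{q^4}=x$ on $\F_{q^4}$ used to collapse the $i=0$ term. I would also remark, exactly as in the proof of Proposition \ref{eqn0}, that the condition $AD-BC\neq0$ is not an additional hypothesis but is forced by a cardinality argument (the two $\F_q$-subspaces have the same size), so it legitimately appears as part of the characterization.
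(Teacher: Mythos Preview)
Your proposal is correct and matches the paper's approach exactly: the paper itself introduces this corollary with the single sentence ``The next result is just Proposition \ref{eqn0} with $n=4$'' and provides no further argument. Your careful bookkeeping of the index reductions modulo $4$ and the use of $x^{q^4}=x$ on $\F_{q^4}$ to simplify the $i=0$ summand is precisely the transcription the paper leaves implicit.
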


\begin{theorem}
\label{n4simple}
Linear sets of rank 4 of $\PG(1,q^4)$, with maximum field of linearity $\F_q$, are simple.
\end{theorem}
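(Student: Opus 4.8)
The plan is to feed the structural theorem proved just above into Corollary~\ref{eqn}. That theorem says that every $\F_q$-subspace $V$ with $L_V=L_U$ (and $L_U$ of maximum field of linearity $\F_q$) is either $\mu U$ or $\mu U^{\perp_\beta'}$ for some $\mu\in\F_{q^4}^*$; since $\mu U$ always lies in the $\GL(2,q^4)$-orbit of $U$, the linear set $L_U$ is simple exactly when $U$ and $U^{\perp_\beta'}$ lie in a common $\mathrm{\Gamma L}(2,q^4)$-orbit. After a projectivity I may assume $U=U_f$ for a $q$-polynomial $f$ with maximum field of linearity $\F_q$, and then by \eqref{form:hatf} the subspace $U^{\perp_\beta'}$ is $\GL(2,q^4)$-equivalent to $U_{\hat f}$. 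So the theorem reduces to a single assertion: for every such $f$, the subspaces $U_f$ and $U_{\hat f}$ are $\mathrm{\Gamma L}(2,q^4)$-equivalent; equivalently, the system of four equations in Corollary~\ref{eqn} admits a solution $A,B,C,D\in\F_{q^4}$ with $AD-BC\ne 0$ and $\sigma=p^k$.

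I would prove this by cases on which of $a_0,a_1,a_2,a_3$ vanish, normalising $a_0=0$ by a translation as in the proof of Lemma~\ref{mainLEMMA}, and using the maximum-field-of-linearity hypothesis to discard the degenerate configurations. When $f$ has very few nonzero coefficients, $L_f$ is a linear set of pseudoregulus type and simplicity for $n=4$ is already known (cf.\ \cite{LaShZa2013} and the proof of Proposition~\ref{prop:n4}); when $L_f$ has a point of weight $n-1=3$ one reduces to the trace function (Theorem~\ref{THMtrace}) or quotes \cite[Theorem 2.3]{MV2016}. For the remaining, ``generic'' configurations I would construct the equivalence by hand from Corollary~\ref{eqn}: the second and fourth equations let me solve for $D$ in terms of $A$ and force a relation of the shape $A^{q^3-q}=a_3^{q+\sigma}a_1^{-q^3-\sigma}$ on $A$ (up to correction terms involving $B$); I then pick the automorphism $\sigma=p^k$ so that the right-hand side falls into the image of $t\mapsto t^{q^3-q}$, i.e.\ the subgroup of order $q^2+1$, choose $A$ accordingly, read $B$ off the third equation and $C$ off the first, and check $AD-BC\ne0$ throughout.

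The hard part is precisely the choice of $\sigma$ in the generic case, and this is where the hypothesis ``maximum field of linearity $\F_q$'' must carry its weight. Since $\sigma=p^k$ realises only finitely many residues modulo $q^2-1$, for an arbitrary quadrinomial there need not be any $\sigma$ making $a_3^{q+\sigma}a_1^{-q^3-\sigma}$ a $(q^3-q)$-th power — this is exactly the mechanism producing the non-simple examples of Proposition~\ref{nemsimple} for $n>4$. So the core of the argument must be to show that when $L_f$ has maximum field of linearity $\F_q$ the coefficients of $f$ satisfy enough norm constraints — obtained either from a direct computation of the weights $w_{L_f}(P_\lambda)=\dim_q\ker(D_f-D_\lambda)$, or from a sharpening of Lemmas~\ref{mainLEMMA} and \ref{mainLEMMA2} applied with $g=\hat f$ — to guarantee an admissible $\sigma$ in every case, and then to verify invertibility of the resulting matrix. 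I expect the case bookkeeping (tracking $b_1b_3=0$, $b_2=0$, and so on, much as in the proof of Proposition~\ref{prop:n4}) to be the bulk of the work.
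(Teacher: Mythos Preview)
Your reduction is sound and matches the paper: after the structural theorem and \eqref{form:hatf} the whole statement boils down to finding, for every $f$ with $a_0=0$ and $(a_1,a_3)\ne(0,0)$, a solution of the system in Corollary~\ref{eqn}. Your handling of the sparse cases (pseudoregulus, etc.) is also fine.

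The gap is in your plan for the generic case. Your strategy is essentially ``take $B$ small, eliminate $D$ between equations two and four, get $A^{q^3-q}=a_3^{\,q+\sigma}a_1^{-q^3-\sigma}$, and choose $\sigma$ so that the right-hand side is a $(q^2-1)$-th power''. But a short computation shows that for $B=0$ and any $\sigma=q^j$ this solvability condition collapses to $\N(a_1)=\N(a_3)$ (for $\sigma\in\{1,q^2\}$ exactly, and to an essentially equivalent constraint for $\sigma\in\{q,q^3\}$); allowing arbitrary $\sigma=p^k$ does not enlarge the target subgroup enough to absorb a generic $a_3/a_1$. So the ``pick $\sigma$'' mechanism only disposes of the case $\N(a_1)=\N(a_3)$, and there is no reason to expect further norm constraints on $a_1,a_2,a_3$: the hypothesis ``maximum field of linearity $\F_q$'' is used in the paper solely to rule out $a_1=a_3=0$, nothing more.

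What the paper actually does in the remaining case $\N(a_1)\ne\N(a_3)$ is quite different from your sketch. It fixes $\sigma=1$ once and for all, observes from equations two and four that $D=-A$, and then treats $B$ as the free variable: one writes $A=w^{-1}(r(B))$ for explicit $\F_q$-linear maps $w,r$, and the third equation becomes $K(B)=0$ for a map $K$ whose image lies in $\F_{q^2}$, so $\dim_q\ker K\ge 2$. The genuine work is then to show that some nonzero $B\in\ker K$ gives $AD-BC\ne 0$; this is done by interpreting $AD-BC=0$ as a quadric $\cQ$ of rank $\le 3$ in $\PG(3,q^4)$ and arguing, via the Frobenius collineation fixing the $q$-order subgeometry $\{\la(x,x^q,x^{q^2},x^{q^3})\ra\}$, that the subspace corresponding to $\ker K$ cannot be contained in $\cQ$. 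None of this geometry is visible from your outline, and it is where the proof actually lives.
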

\begin{proof}
Let $f=\sum_{i=0}^3a_ix^{q^i}$.
After a suitable projectivity we may assume $a_0=0$.
We will use Corollary \ref{eqn} with $\sigma\in \{1,q^2\}$.
We may assume that $a_1=0$ and $a_3=0$ do not hold at the same time since otherwise $f$ is $\F_{q^2}$-linear.

First consider the case when $\N(a_1)=\N(a_3)$.
Let $B=C=0$, $D=A^{q^2}$ and take $A$ such that $A^{q-1}=a_3/a_1^q$. This can be done since $\N(a_3/a_1^q)=1$. Then
Corollary \ref{eqn} with $\sigma=q^2$ provides the existence of an $\F_{q^4}$-semilinear map between $U_f$ and $U_{\hat{f}}$.

From now on we assume $\N(a_1)\neq \N(a_3)$.

If $a_2=a_1=0$, then let $\sigma=1$, $A=D=0$, $B=1$ and $C=a_3^{2q}$.
If $a_2=a_3=0$, then let $\sigma=1$, $A=D=0$, $B=1$ and $C=a_1^{2q^3}$.

Now consider the case $a_2=0$ and $a_1a_3\neq 0$.
Let $A=D=0$. Then the equations of Corollary \ref{eqn} with $\sigma=1$ yield
\begin{equation}
\label{ff}
C=B^{q^3}a_1^{2q^3}+B^qa_3^{2q},
\end{equation}
\begin{equation}
\label{tt}
0=B^qa_1^qa_3^q+B^{q^3}a_1^{q^3}a_3^{q^3}.
\end{equation}
\eqref{tt} is equivalent to $0=(Ba_1a_3)^{q^2}+Ba_1a_3$.
Since $X^{q^2}+X=0$ has $q^2$ solutions in $\F_{q^4}$, for any $a_1$ and $a_3$ we can find $B\in\F_{q^4}^*$ such that \eqref{tt} is satisfied. If $B^{q^3}a_1^{2q^3}+B^qa_3^{2q}\neq 0$, then let $C$ be this field element. We show that this is always the case.
Suppose, contrary to our claim, that $B^{q^3-q}=-a_3^{2q}/a_1^{2q^3}$. Because of the choice of $B$ \eqref{tt} yields
$B^{q^3-q}=-a_1^{q-q^3}a_3^{q-q^3}$. Since $B\neq 0$ this implies
\[-a_3^{2q}/a_1^{2q^3}=-a_1^{q-q^3}a_3^{q-q^3},\]
and hence $a_1^{q^2+1}=a_3^{q^2+1}$. A contradiction since $\N(a_1)\neq \N(a_3)$.
From now on we assume $a_2\neq 0$, we may also assume $a_2=1$ after a suitable projectivity.

Corollary \ref{eqn} with $\sigma=1$ yields
\begin{equation}
\label{p1}
C =(Ba_1^2)^{q^3}+B^{q^2}+(Ba_3^2)^q,
\end{equation}
\begin{equation}
\label{p2}
Da_1-(a_3A)^q=(Ba_1)^{q^3}+(Ba_3)^{q^2},
\end{equation}
\begin{equation}
\label{p3}
D-A^{q^2}=(Ba_1a_3)^{q^3}+(Ba_3a_1)^q,
\end{equation}
\begin{equation}
\label{p4}
Da_3-(a_1A)^{q^3}=(Ba_1)^{q^2}+(Ba_3)^q.
\end{equation}

The right hand side of \eqref{p2} is the $q$-th power of the right hand side of \eqref{p4} and hence $D^qa_3^q-a_1A=Da_1-a_3^qA^q$, i.e.
\[a_3^q(D+A)^q=a_1(D+A).\]
Since $a_1$ or $a_3$ is non-zero, we have either $D=-A$, or $(D+A)^{q-1}=a_1/a_3^q$. The latter case can be excluded since in that case $\N(a_1)=\N(a_3)$. Let $D=-A$. Then the left hand side of \eqref{p2} is $w(A):=-Aa_1-a_3^qA^q$.
The kernel of $w$ is trivial and hence $B$ uniquely determines $A$.
The inverse of $w$ is
\[w^{-1}(x)=\frac{-xa_1^{q+q^2+q^3}+x^qa_1^{q^2+q^3}a_3^q-x^{q^2}a_1^{q^3}a_3^{q+q^2}+x^{q^3}a_3^{q+q^2+q^3}}{\N(a_1)-\N(a_3)}.\]
Denote the right hand side of \eqref{p2} by $r(B)$, the right hand side of \eqref{p3} by $t(B)$.
Then $B$ has to be in the kernel of
\[K(x):=w^{-1}(r(x))+(w^{-1}(r(x)))^{q^2}+t(x).\]
If $B=0$, then $A=B=D=0$ and hence this is not a suitable solution.
It is easy to see that $Im\, t \subseteq \F_{q^2}$ and hence also
$Im\, K \subseteq \F_{q^2}$, so the kernel of $K$ has at least dimension 2.

Let $B\in \ker K$, $B\neq 0$, $A:=w^{-1}(r(B))$ and
$C:=(Ba_1^2)^{q^3}+B^{q^2}+(Ba_3^2)^q$ (we recall $D=-A$). This gives a solution.
We have to check that $B$ can be chosen such that $AD-BC\neq 0$, i.e.
\[Q(B):=\left(w^{-1}(r(B))\right)^2+ B\left((Ba_1^2)^{q^3}+B^{q^2}+(Ba_3^2)^q\right),\]
is non-zero. We have $w^{-1}(r(x))(\N(a_1)-\N(a_3))=\sum_{i=0}^3c_i x^{q^i}$, where
\[c_0=a_1^{1+q^2+q^3}a_3^q-a_1^{q^3}a_3^{1+q+q^2},\]
\[c_1=a_3^{2q+q^2+q^3}-a_1^{q+q^3}a_3^{q+q^2},\]
\[c_2=a_3^{q+q^2+q^3}a_1^{q^2}-a_1^{q+q^2+q^3}a_3^{q^2},\]
\[c_3=a_1^{q^2+q^3}a_3^{q+q^3}-a_1^{q+q^2+2q^3}.\]
If $X_0, X_1, X_2, X_3$ denote the coordinate functions in $\PG(3,q^4)$ and $Q(B)=0$ for some $B\in \F_{q^4}$, then the point $\la(B,B^q,B^{q^2},B^{q^3})\ra_{q^4}$ is contained in the the quadric $\cQ$ of $\PG(3,q^4)$ defined by the equation
\[\left(\sum_{i=0}^3c_i X_i\right)^2+X_0(X_1a_3^{2q}+X_2+X_3a_1^{2q^3})(\N(a_1)-\N(a_3))^2=0.\]
We can see that the equation of $\cQ$ is the linear combination of the equations of two degenerate quadrics, a quadric of rank 1 and a quadric of rank 2.
It follows that $\cQ$ is always singular and it has rank 2 or 3.
In particular, the rank of $\cQ$ is 2 when the intersection of the planes $\cA: X_0=0$ and $\cB: X_1a_3^{2q}+X_2+X_3a_1^{2q^3}=0$ is contained in the plane $\cC: \sum_{i=0}^3c_i X_0=0$. Straightforward calculations show that under our hypothesis ($a_1\neq 0$ or $a_3\neq 0$, $\N(a_1)\neq \N(a_3)$) this happens if only if $1=a_1^qa_3$.

\noindent
We recall that the kernel of $K$ has dimension at least two. Let
\[H=\{\la(x,x^q,x^{q^2},x^{q^3})\ra_{q^4} \colon K(x)=0\}.\]

Our aim is to prove that $H$ has points not belonging to the quadric $\cQ$, i.e. $H \nsubseteq \cQ$.
\medskip

\noindent
Note that $x\in \F_{q^4} \mapsto (x,x^q,x^{q^2},x^{q^3})\in \F_{q^4}^4$ is a vector-space isomorphism between $\F_{q^4}$ and the 4-dimensional $\F_q$-space $\{(x,x^q,x^{q^2},x^{q^3}) \colon x\in \F_{q^4}\} \subset \F_{q^4}^4$.
Denote by $\bar{H}$ the $\F_{q^4}$-extension of $H$, i.e. the projective subspace of $\PG(3,q^4)$ generated by the points of $H$. Then the projective dimension of $\bar{H}$ is $\dim \ker K -1$. Let $\sigma$ denotes the collineation $(X_0,X_1,X_2,X_3)\mapsto (X_3^q,X_0^q,X_1^q,X_2^q)$ of $\PG(3,q^4)$. Then the points of $H$ are fixed points of $\sigma$ and hence $\sigma$ fixes the subspace $\bar{H}$.
Note that the vertex of $\cQ$ is always disjoint from $H$ since it is contained in $\cA$, while $H$ is disjoint from it.

First of all note that if $\dim \ker K=4$, i.e. $K$ is the zero polynomial, then $H$ is a subgeometry of $\PG(3,q^4)$ isomorphic to $\PG(3,q)$, which clearly cannot be contained in $\cQ$. It follows that $\dim \ker K$ is either 3 or 2, i.e. $H$ is either a $q$-order subplane or a $q$-order subline.

First assume $1\neq a_1^qa_3$, i.e. the case when $\cQ$ has rank 3.
If $H$ is a $q$-order subplane, then $H$ cannot be contained in $\cQ$. To see this, suppose the contrary and take three non-concurrent $q$-order sublines of $H$. The $\F_{q^4}$-extensions of these sublines are also contained in $\cQ$, but there is at least one of them which does not pass through the singular point of $\cQ$, a contradiction.
Now assume that $H$ is a $q$-order subline.
The singular point of $\cQ$ is the intersection of the planes
$\cA, \cB$ and $\cC$. Straightforward calculations show that this point is
$V=\la (v_0,v_1,v_2,v_3)\ra_{q^4}$, where
\[v_0=0,\]
\[v_1=a_1^{q^2+q^3}(a_1^{q^3}a_3^{q^2}-1),\]
\[v_2=a_1^{q^3}a_3^{q}(a_1^{q^2}a_3^q-a_1^{q^3}a_3^{q^2}),\]
\[v_3=a_3^{q+q^2}(1-a_1^{q^2}a_3^q).\]

Suppose, contrary to our claim, that $H$ is contained in $\cQ$. Then $\bar{H}$ passes through the singular point $V$ of $\cQ$. Since $\bar{H}$ is fixed by $\sigma$, it follows that the points $V,V^{\sigma},V^{\sigma^2},V^{\sigma^3}$ have to be collinear ($v_0=0$ yields that these four points cannot coincide). Let $M$ denote the $4\times 4$ matrix, whose $i$-th row consists of the coordinates of $V^{\sigma^{i-1}}$ for $i=1,2,3,4$.
The rank of $M$ is two, thus each of its minors of order three is zero. Let $M_{i,j}$ denote the submatrix of $M$ obtained by deleting the $i$-th row and $j$-th column of $M$. Then
\[\det M_{1,2}=a_1^{q+1}(a_1^qa_3-1)^{q^3+1}\alpha,\]
\[\det M_{1,4}=a_3^{q^3+1}(a_1^qa_3-1)^{q^3+1}\beta,\]
where
\[\alpha=\N(a_1)(a_1^{q^2}a_3^q-1)+\N(a_3)(1-a_1^qa_3-a_1^{q^3}a_3^{q^2}+a_1a_3^{q^3}),\]
\[\beta=\N(a_1)(a_1a_3^{q^3}+a_1^{q^2}a_3^q-a_1^qa_3-1)+\N(a_3)(1-a_1^{q^3}a_3^{q^2}).\]
Since $a_1$ and $a_3$ cannot be both zeros and $a_1^qa_3-1\neq 0$, we have $\alpha=\beta=0$.
But $\alpha-\beta=(\N(a_1)-\N(a_3))(a_1^qa_3-a_1a_3^{q^3})$. It follows that $a_1^qa_3 \in \F_q$ and hence $\alpha$ can be written as $(\N(a_1)-\N(a_3))(a_1^qa_3-1)$,  which is non-zero. This contradiction shows that $V$ cannot be contained in a line fixed by $\sigma$ and hence $\bar{H}$ cannot pass through $V$. It follows that $H \nsubseteq\cQ$ and hence we can choose $B$ such that $AD-BC\neq 0$.

Now consider the case $1=a_1^qa_3$. Then $\cQ$ is the union of two planes meeting each other in $\ell:=\cA \cap \cB$.
It is easy to see that $R:=\la(0,1,-a_3^{2q},0)\ra_{q^4}$ and $R^{\sigma}$ are two distinct points of $\ell$.
Since $\N(a_1)\neq \N(a_3)$ and $\N(a_1)\N(a_3)=1$, $\det\{R,R^{\sigma},R^{\sigma^2},R^{\sigma^3}\}=\N(a_3)^2-1$ cannot be zero and hence $R\notin H$, otherwise $\dim  \langle R,R^\sigma,R^{\sigma^2},R^{\sigma^3}\rangle\leq \dim \bar H\leq 2$.
Suppose, contrary to our claim, that $H$ is contained in one of the two planes of $\cQ$.
Since $R\notin H$, such a plane can be written as $\la H, R \ra$ and since
$H$ is fixed by $\sigma$ and $\ell \subseteq \langle H,R\rangle$, we have $\la H, R \ra^\sigma=\la H, R^\sigma \ra=\la H, R \ra$. Thus
$R,R^\sigma,R^{\sigma^2},R^{\sigma^3}$ are coplanar, a contradiction.
\end{proof}

\section{Different aspects of the classes of a linear set}
\label{aspects}

\subsection{Class of a linear set and the associated variety}

Let $L_U$ be an $\F_q$-linear set of rank $k$ of $\PG(W,\F_{q^n})=\PG(r-1,q^n)$.
Consider the projective space $\Omega=\PG(W,\F_q)=\PG(rn-1,q)$. For each point $P=\la {\bf u} \ra_{\F_{q^n}}$ of $\PG(W,\F_{q^n})$ there corresponds a projective $(n-1)$-subspace $X_P:=\PG(\la {\bf u} \ra_{q^n}, \F_q )$ of $\Omega$. The variety of $\Omega$ associated to $L_U$ is
\begin{equation}
\cV_{r,n,k}(L_U)=\bigcup_{P\in L_U} X_P.
\end{equation}
A $(k-1)$-space $\cH=\PG(V,\F_q)$ of $\Omega$ is said to be a \emph{transversal} space of $\cV(L_U)$ if $\cH \cap X_P \neq \emptyset$ for each point $P\in L_U$, i.e. $L_U=L_V$.

The $\ZG$-class of an $\F_q$-linear set $L_U$ of rank $n$ of $\PG(W,\F_{q^n})=\PG(1,q^n)$, with maximum field of linearity $\F_q$, is the number of transversal spaces of $\cV_{2,n,n}(L_U)$ up to the action of the subgroup $G$ of $\mathrm{PGL}(2n-1,q)$ induced by the maps ${\bf x}\in W \mapsto \lambda {\bf x} \in W$, with $\lambda \in \F_{q^n}^*$. Note that $G$ fixes $X_P$ for each point $P\in \PG(1,q^n)$ and hence fixes the variety.

The maximum size of an $\F_q$-linear set $L_U$ of rank $n$ of $\PG(1,q^n)$ is $(q^n-1)/(q-1)$. If this bound is attained (hence each point of $L_U$ has weight one), then $L_U$ is a \emph{maximum scattered} linear set of $\PG(1,q^n)$. For maximum scattered linear sets, the number of transversal spaces through $Q\in \cV(L_U)$ does not depend on the choice of $Q$ and this number is the $\ZG$-class of $L_U$.

\begin{example}
\label{pseudoZG}
Let $U=\{(x,x^q) \colon x\in \F_{q^n}\}$ and consider the linear set $L_U$.
In \cite{LaShZa2013} the variety $\cV_{2,n,n}(L_U)$ was studied, and the transversal spaces were determined.
It follows that the $\ZG$-class of $L_U$ is $\varphi(n)$, where $\varphi$ is the Euler's phi function.
\end{example}


\subsection{Classes of linear sets as projections of subgeometries}
\label{subproj}

Let $\Sigma=\PG(k-1,q)$ be a canonical subgeometry of $\Sigma^*=\PG(k-1,q^n)$. Let $\Gamma \subset \Sigma^* \setminus \Sigma$  be a $(k-r-1)$-space and let $\Lambda \subset \Sigma^* \setminus \Gamma$ be an $(r-1)$-space of $\Sigma^*$. The projection of $\Sigma$ from {\it center} $\Gamma$ to {\it axis} $\Lambda$ is the point set
\begin{equation}
\label{proj}
L=p_{\,\Gamma,\,\Lambda}(\Sigma):=\{\la \Gamma, P \ra \cap \Lambda \colon P\in \Sigma\}.
\end{equation}

In \cite{LuPo2004} Lunardon and Polverino characterized linear sets as projections of canonical subgeometries. They proved the following.

\begin{theorem}[{\cite[Theorems 1 and 2]{LuPo2004}}]
\label{LuPo}
Let $\Sigma^*$, $\Sigma$, $\Lambda$, $\Gamma$ and $L=p_{\,\Gamma,\,\Lambda}(\Sigma)$ be defined as above. Then $L$ is an $\F_q$-linear set of rank $k$ and $\la L \ra=\Lambda$. Conversely, if $L$ is an $\F_q$-linear set of rank $k$ of $\Lambda=\PG(r-1,q^n)\subset \Sigma^*$ and $\la L \ra=\Lambda$, then there is a $(k-r-1)$-space $\Gamma$ disjoint from $\Lambda$ and a canonical subgeometry $\Sigma=\PG(r-1,q)$ disjoint from  $\Gamma$ such that $L=p_{\,\Gamma,\,\Lambda}(\Sigma)$.
\end{theorem}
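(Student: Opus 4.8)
The plan is to convert the projective operation $p_{\Gamma,\Lambda}$ into a linear projection of the ambient $\F_{q^{n}}$-vector space and then read off the linear-set structure in coordinates. For the \emph{direct statement} I would write $\Sigma^{*}=\PG(V,\F_{q^{n}})$ with $V=\F_{q^{n}}^{k}$ and, after a projectivity (a canonical subgeometry is $\PGL$-equivalent to the standard one), take $\Sigma=\{\langle\mathbf{x}\rangle_{\F_{q^{n}}}\colon\mathbf{x}\in\F_{q}^{k}\setminus\{\mathbf{0}\}\}$, so that the $\F_q$-span of $\Sigma$ is $\F_{q}^{k}$. Put $\Gamma=\PG(\Gamma_{0},\F_{q^{n}})$ with $\dim_{\F_{q^{n}}}\Gamma_{0}=k-r$; the hypothesis $\Gamma\subset\Sigma^{*}\setminus\Sigma$ is equivalent to $\Gamma_{0}\cap\F_{q}^{k}=\{\mathbf{0}\}$. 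Since $\Gamma\cap\Lambda=\emptyset$ and $\dim\Gamma+\dim\Lambda=k-1$ we get $\langle\Gamma,\Lambda\rangle=\Sigma^{*}$, so $\Lambda=\PG(\Lambda_{0},\F_{q^{n}})$ for an $\F_{q^{n}}$-complement $\Lambda_{0}$ of $\Gamma_{0}$, and for $P=\langle v\rangle_{\F_{q^{n}}}$ with $v\in V\setminus\Gamma_{0}$ the subspace $\langle\Gamma,P\rangle$ meets $\Lambda$ precisely in $\langle\pi(v)\rangle_{\F_{q^{n}}}$, where $\pi\colon V\to\Lambda_{0}$ is the projection with kernel $\Gamma_{0}$. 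Hence $L=p_{\Gamma,\Lambda}(\Sigma)=L_{U}$ with $U:=\pi(\F_{q}^{k})$; as $\pi|_{\F_{q}^{k}}$ is injective (its kernel is $\Gamma_{0}\cap\F_{q}^{k}=\{\mathbf{0}\}$) we get $\dim_{\F_{q}}U=k$, so $L$ is an $\F_q$-linear set of rank $k$, and $\langle L\rangle_{\F_{q^{n}}}=\pi(\langle\F_{q}^{k}\rangle_{\F_{q^{n}}})=\pi(V)=\Lambda_{0}$, i.e.\ $\langle L\rangle=\Lambda$.

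For the \emph{converse} I would start from $L=L_{U}$ with $\Lambda=\PG(W,\F_{q^{n}})$, $\dim_{\F_{q^{n}}}W=r$, and $U\le W$ an $\F_q$-subspace with $\dim_{\F_{q}}U=k$ and $\langle U\rangle_{\F_{q^{n}}}=W$ (the last equality being $\langle L\rangle=\Lambda$). Fix an $\F_q$-basis $u_{1},\dots,u_{k}$ of $U$, embed $\Lambda$ into $\Sigma^{*}=\PG(V,\F_{q^{n}})$ with $V=\F_{q^{n}}^{k}$ and standard basis $e_{1},\dots,e_{k}$, and let $\psi\colon V\to W$ be the $\F_{q^{n}}$-linear map with $\psi(e_{i})=u_{i}$, which is onto since the $u_{i}$ span $W$ over $\F_{q^{n}}$. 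Set $\Gamma_{0}:=\ker\psi$ (dimension $k-r$), $\Gamma:=\PG(\Gamma_{0},\F_{q^{n}})$, choose an $\F_{q^{n}}$-complement $\Lambda_{0}$ of $\Gamma_{0}$, identify $\Lambda=\PG(\Lambda_{0},\F_{q^{n}})$ with $\PG(W,\F_{q^{n}})$ via $\psi|_{\Lambda_{0}}$, and put $\Sigma=\{\langle\mathbf{x}\rangle_{\F_{q^{n}}}\colon\mathbf{x}\in\F_{q}^{k}\setminus\{\mathbf{0}\}\}$. Then $\Gamma\cap\Lambda=\emptyset$ by construction, and $\Gamma\cap\Sigma=\emptyset$ since $\mathbf{x}\in\F_{q}^{k}\cap\Gamma_{0}$ forces $\sum_{i}x_{i}u_{i}=0$ with $x_{i}\in\F_q$, whence $\mathbf{x}=\mathbf{0}$ by the $\F_q$-independence of the $u_{i}$. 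For $P=\langle\mathbf{x}\rangle_{\F_{q^{n}}}\in\Sigma$ the direct part identifies $\langle\Gamma,P\rangle\cap\Lambda$ with $\langle\psi(\mathbf{x})\rangle_{\F_{q^{n}}}=\langle\sum_{i}x_{i}u_{i}\rangle_{\F_{q^{n}}}$, so letting $\mathbf{x}$ run over $\F_{q}^{k}\setminus\{\mathbf{0}\}$ yields $p_{\Gamma,\Lambda}(\Sigma)=\{\langle u\rangle_{\F_{q^{n}}}\colon u\in U\setminus\{\mathbf{0}\}\}=L_{U}=L$.

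The algebra above is light; the care — and the part I expect to be the \emph{main obstacle} — lies in the incidence bookkeeping that makes $p_{\Gamma,\Lambda}$ a well-defined map onto the stated target: one must arrange that $\Gamma$ is at once disjoint from $\Sigma$ (so the projection is defined at every point of $\Sigma$), disjoint from $\Lambda$, and complementary to $\Lambda$ in $\Sigma^{*}$ (so each $\langle\Gamma,P\rangle$ meets $\Lambda$ in exactly one point), and one must check that the identification $\Lambda\cong\PG(W,\F_{q^{n}})$ used in the converse is precisely the one induced by $\psi$. Once this scaffolding is in place, $L=L_{U}$ is immediate from the coordinate description, and the remaining claims — $\dim_{\F_{q}}U=k$ and $\langle L\rangle=\Lambda$ — reduce to the injectivity of $\pi|_{\F_{q}^{k}}$ and the surjectivity of $\psi$, which come respectively from $\Gamma_{0}\cap\F_{q}^{k}=\{\mathbf{0}\}$ and $\langle U\rangle_{\F_{q^{n}}}=W$.
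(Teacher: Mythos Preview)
Your argument is correct and is essentially the standard linear-algebraic proof of this result. Note, however, that the present paper does not supply a proof of this theorem: it is quoted verbatim from \cite{LuPo2004} and used as a black box, so there is no ``paper's own proof'' to compare against. Your approach --- normalising $\Sigma$ to the standard subgeometry $\PG(\F_q^k,\F_q)\subset\PG(\F_{q^n}^k,\F_{q^n})$, realising $p_{\Gamma,\Lambda}$ as the $\F_{q^n}$-linear projection $\pi$ with kernel $\Gamma_0$, and for the converse building $\psi\colon\F_{q^n}^k\to W$ from an $\F_q$-basis of $U$ --- is exactly the argument one finds in the literature (and in \cite{LuPo2004} in particular). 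The only cosmetic point is that the statement presents $\Lambda$ as a fixed subspace of a given $\Sigma^*$, whereas you construct $\Sigma^*$ around $W$ and then identify; this is harmless since any two copies of $\PG(k-1,q^n)$ containing a fixed $(r-1)$-subspace are projectively equivalent by a collineation fixing that subspace, so the $\Gamma$ and $\Sigma$ you produce transport into the given $\Sigma^*$.
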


Let $L_U$ be an $\F_q$-linear set of rank $k$ of $\mathbb{P}=\PG(W,\F_{q^n})=\PG(r-1,q^n)$ such that for each $k$-dimensional $\F_q$-subspace $V$ of $W$
if $\PG(V,\F_q)$ is a transversal space of $\cV_{r,n,k}(L_U)$, then there exists
$\gamma \in \mathrm{P \Gamma L}(W,\F_q)$, such that
$\gamma$ fixes the Desarguesian spread $\{X_P \colon P\in \mathbb{P}\}$ and $\PG(U,\F_q)^{\gamma}=\PG(V,\F_q)$. This is condition (A) from \cite{CSZ2015}, and it is equivalent to say that $L_U$ is a simple linear set. Then the main results of \cite{CSZ2015} can be formalized as follows.

\begin{theorem}[\cite{CSZ2015}]
Let $L_1=p_{\,\Gamma_1,\,\Lambda_1}(\Sigma_1)$ and $L_2=p_{\,\Gamma_2,\,\Lambda_2}(\Sigma_2)$ be two linear sets of rank $k$.
If $L_1$ and $L_2$ are equivalent and one of them is simple, then there is a collineation mapping $\Gamma_1$ to $\Gamma_2$ and $\Sigma_1$ to $\Sigma_2$.
\end{theorem}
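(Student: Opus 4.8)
The plan is to pass from the geometric data to the algebraic data via the Lunardon--Polverino correspondence (Theorem~\ref{LuPo}), use simplicity to compare the two $\F_q$-subspaces representing $L_1$ and $L_2$, and then lift the resulting semilinear map up to a collineation of $\Sigma^*$.

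First I would make the dictionary precise. If $L=p_{\,\Gamma,\,\Lambda}(\Sigma)$ with $\Sigma=\PG(S,\F_q)$ a canonical subgeometry of $\Sigma^*=\PG(\tilde V,\F_{q^n})$ and $\Gamma=\PG(K,\F_{q^n})$, then the projection from $\Gamma$ onto $\Lambda$ is, up to a collineation of $\Sigma^*$, the quotient map $\tilde V\to\tilde V/K\cong W$, where $\Lambda=\PG(W,\F_{q^n})$; since $\Gamma\cap\Sigma=\emptyset$ we have $S\cap K=0$, so the image $U$ of $S$ in $W$ is an $\F_q$-subspace with $\dim_qU=k$, $\langle U\rangle_{\F_{q^n}}=W$, and $L=L_U$. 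Conversely, from an $\F_q$-subspace $U\subseteq W$ of dimension $k$ with $\langle U\rangle_{\F_{q^n}}=W$ one recovers such a configuration up to a collineation of $\Sigma^*$: put $\tilde V=U\otimes_{\F_q}\F_{q^n}$, $S=U\otimes 1$, $K=\ker\rho$, where $\rho\colon U\otimes_{\F_q}\F_{q^n}\to W$ is the multiplication map; the $\F_{q^n}$-linear isomorphism $S\otimes_{\F_q}\F_{q^n}\to\tilde V$ identifies this configuration with the original one. Thus specifying the pair $(\Gamma,\Sigma)$ with a given axis is the same as specifying $U\subseteq W$ of dimension $k$ with $\langle U\rangle_{\F_{q^n}}=W$, the first up to collineations of $\Sigma^*$, the second up to $\mathrm{\Gamma L}(W,\F_{q^n})$.

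Next I would reduce to a common axis. By hypothesis there is a semilinear isomorphism $\varphi$ of $\Lambda_1$ onto $\Lambda_2$ with $L_1^{\varphi}=L_2$, and it extends to a collineation $\hat\varphi$ of $\Sigma^*$ (a semilinear isomorphism between two subspaces of equal dimension extends to the ambient space). Replacing $(\Gamma_1,\Sigma_1,\Lambda_1)$ by its image under $\hat\varphi$, which is harmless for the conclusion, we may assume $\Lambda_1=\Lambda_2=:\Lambda=\PG(W,\F_{q^n})$ and $L_1=L_2=:L$ as linear sets of $\PG(W,\F_{q^n})$; without loss of generality $L_1$ is the simple one, so $L$ has maximum field of linearity $\F_q$ and the notion of simplicity applies to it. Let $U_i\subseteq W$ be the $\F_q$-subspace associated with $(\Gamma_i,\Sigma_i)$ as in the first step; then $\dim_qU_1=\dim_qU_2=k$ and $L_{U_1}=L_{U_2}=L$. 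Since $L=L_{U_1}$ is simple, there is $g\in\mathrm{\Gamma L}(W,\F_{q^n})$, with companion automorphism $\sigma$, such that $U_1^g=U_2$. As $\sigma$ fixes $\F_q$ pointwise, $g|_{U_1}\colon U_1\to U_2$ is an $\F_q$-linear isomorphism, and $\tilde g:=(g|_{U_1})\otimes\sigma\colon U_1\otimes_{\F_q}\F_{q^n}\to U_2\otimes_{\F_q}\F_{q^n}$ is a bijective $\sigma$-semilinear map satisfying $\rho_2\circ\tilde g=g\circ\rho_1$ for the multiplication maps $\rho_i$. Hence the collineation of $\Sigma^*$ induced by $\tilde g$ (after the identifications of the first step) maps $\ker\rho_1$ to $\ker\rho_2$, i.e. $\Gamma_1\mapsto\Gamma_2$, maps $U_1\otimes 1$ to $U_2\otimes 1$, i.e. $\Sigma_1\mapsto\Sigma_2$, and induces $g$ on $\Lambda$; composing with $\hat\varphi$ gives the required collineation. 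If instead $L_2$ is the simple one, exchange the roles of the indices and take the inverse collineation.

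The only substantial point is the last lifting step: that a $\mathrm{\Gamma L}(W,\F_{q^n})$-equivalence of the representing subspaces $U_1,U_2$ promotes to a collineation of $\Sigma^*$ respecting both the center and the subgeometry. Simplicity is invoked exactly once, to produce $g$; everything else is bookkeeping with the identifications of the first paragraph, and the points requiring care there are the verification $\rho_2\circ\tilde g=g\circ\rho_1$ and the fact that the representing subspaces both have $\F_q$-dimension $k$ (so that simplicity can be applied to them) --- the latter being built into the construction of $U_i$ from the canonical subgeometry $\Sigma_i\cong\PG(k-1,q)$.
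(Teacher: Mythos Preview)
The paper does not give its own proof of this statement; it is quoted from \cite{CSZ2015} after the remark that condition~(A) of that reference is equivalent to simplicity. Your argument is correct and is essentially the approach of \cite{CSZ2015}: pass from the projection data $(\Gamma_i,\Sigma_i,\Lambda_i)$ to representing $\F_q$-subspaces $U_i\subseteq W$ via the Lunardon--Polverino dictionary (Theorem~\ref{LuPo}), reduce to a common axis by extending the given collineation $\varphi$ to $\Sigma^*$, invoke simplicity once to obtain $g\in\mathrm{\Gamma L}(W,\F_{q^n})$ with $U_1^g=U_2$, and then lift $g$ to a collineation of $\Sigma^*$ respecting both centre and subgeometry. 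Your tensor-product realisation of the lift, together with the verification $\rho_2\circ\tilde g=g\circ\rho_1$, is a tidy way to carry out what the surrounding discussion in Section~\ref{subproj} sketches in the language of transversal spaces and spread-preserving collineations of $\PG(W,\F_q)$; the two formulations are equivalent, and the substantive input (simplicity $\Rightarrow$ existence of $g$) is used in the same place.
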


\begin{theorem}[\cite{CSZ2015}]
If $L$ is a non-simple linear set of rank $k$ in $\Lambda=\la L \ra$, then there are a subspace  $\Gamma=\Gamma_1=\Gamma_2$ disjoint from $\Lambda$, and two $q$-order canonical subgeometries $\Sigma_1,\Sigma_2$ such that $L=p_{\,\Gamma,\,\Lambda}(\Sigma_1)=p_{\,\Gamma,\,\Lambda}(\Sigma_2)$, and there is no collineation fixing $\Gamma$ and mapping $\Sigma_1$ to $\Sigma_2$.
\end{theorem}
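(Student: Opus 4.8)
\medskip
\noindent\textbf{Proof proposal.} The plan is to deduce the statement directly from the defining property of a non-simple linear set — the existence of two $\F_q$-subspaces defining $L$ that lie on different $\mathrm{\Gamma L}$-orbits — by realising \emph{both} of these subspaces as $\F_q$-forms inside one and the same $\Sigma^*=\PG(k-1,q^n)$, projecting onto $\Lambda$ from one and the same center $\Gamma$.

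First I would fix the ambient data. Write $\Lambda=\PG(W,\F_{q^n})$, $\dim_{\F_{q^n}}W=r$, and (since $L$ is non-simple) choose $\F_q$-subspaces $U,V\le W$ of dimension $k$ with $L_U=L_V=L$, $\la U\ra_{\F_{q^n}}=\la V\ra_{\F_{q^n}}=W$, and $U,V$ on different orbits of $\mathrm{\Gamma L}(W,\F_{q^n})$. Using the construction behind Theorem \ref{LuPo}, I would pass to $\tilde W_U:=U\otimes_{\F_q}\F_{q^n}$ and $\tilde W_V:=V\otimes_{\F_q}\F_{q^n}$ (both of $\F_{q^n}$-dimension $k$), with the canonical surjections $\pi_U\colon u\otimes\mu\mapsto\mu u$ and $\pi_V$ likewise; the $\F_q$-forms $U^\sharp:=U\otimes 1$, $V^\sharp:=V\otimes 1$ then define $q$-order canonical subgeometries $\Sigma_U:=\PG(U^\sharp,\F_q)$, $\Sigma_V:=\PG(V^\sharp,\F_q)$ that are disjoint from $\PG(\ker\pi_U,\F_{q^n})$, resp.\ $\PG(\ker\pi_V,\F_{q^n})$, and projecting $\Sigma_U$ (resp.\ $\Sigma_V$) from that center onto a complementary $(r-1)$-space identified with $\Lambda$ through $\pi_U$ (resp.\ $\pi_V$) gives $L_{\pi_U(U^\sharp)}=L_U=L$ (resp.\ $L_{\pi_V(V^\sharp)}=L_V=L$). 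To obtain a \emph{common} center I would choose an $\F_{q^n}$-isomorphism $\psi\colon\tilde W_V\to\tilde W_U$ with $\pi_U\circ\psi=\pi_V$ — this exists because $\pi_U,\pi_V$ are surjections of a $k$-space onto an $r$-space (match complements of the kernels through $W$, pick any isomorphism of the kernels) — and use it to identify $\tilde W_V$ with $\tilde W_U=:\tilde W$. Then $\pi_V$ becomes $\pi:=\pi_U$, $\ker\pi_V$ becomes $K:=\ker\pi$, and both projections are realised inside $\Sigma^*:=\PG(\tilde W,\F_{q^n})$ from the common center $\Gamma:=\PG(K,\F_{q^n})$ (a $(k-r-1)$-space disjoint from $\Lambda$), with $\Sigma_1:=\Sigma_U$, $\Sigma_2:=\psi(\Sigma_V)$ both $q$-order canonical subgeometries and $p_{\,\Gamma,\,\Lambda}(\Sigma_1)=p_{\,\Gamma,\,\Lambda}(\Sigma_2)=L$.

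It then remains to rule out a collineation fixing $\Gamma$ and sending $\Sigma_1$ to $\Sigma_2$, which I would do by contradiction. Such a collineation $g$ is induced by a $\tau$-semilinear bijection $\hat g$ of $\tilde W$ with $\hat g(K)=K$, hence descends (via $\pi$) to a $\tau$-semilinear bijection $\bar g$ of $\tilde W/K\cong W$ with $\pi\circ\hat g=\bar g\circ\pi$. Since $\tau$ stabilises $\F_q$, $\hat g$ sends $\F_q$-forms to $\F_q$-forms, so $\hat g(U^\sharp)$ is a $k$-dimensional $\F_q$-subspace defining the subgeometry $g(\Sigma_1)=\Sigma_2=\PG(V^\sharp,\F_q)$; because full-dimensional subgeometries have $\ZG$-class one (\cite[Theorem 2.6]{LV2013}, \cite[Section 25.5]{JWPH3}) this forces $\hat g(U^\sharp)=\lambda V^\sharp$ for some $\lambda\in\F_{q^n}^*$. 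Applying $\pi$ and using $\pi(V^\sharp)=V$ yields $\bar g(U)=\lambda V$, so $\lambda^{-1}\bar g\in\mathrm{\Gamma L}(W,\F_{q^n})$ maps $U$ onto $V$ — contradicting the choice of $U$ and $V$. (In particular $\Sigma_1\neq\Sigma_2$, as the statement implicitly requires.)

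The only genuinely load-bearing inputs are the Lunardon--Polverino identification $p_{\,\Gamma,\,\Lambda}(\Sigma_i)=L$ and the rigidity of full subgeometries; everything else is bookkeeping with tensor products and with the descent of semilinear maps to a quotient. I expect the main obstacle — or at least the spot most prone to error — to be this descent together with the reduction to a single center: one must check carefully that ``$g$ fixes $\Gamma$'' really forces $\hat g(K)=K$ and a well-defined $\bar g$ on $\tilde W/K$ carrying the \emph{same} companion automorphism $\tau$, and that replacing $\tilde W_V$ by $\tilde W_U$ through the $\F_{q^n}$-linear, $\pi$-intertwining isomorphism $\psi$ leaves the $\mathrm{\Gamma L}(W,\F_{q^n})$-orbit of $V$ untouched (it does, for exactly that reason).
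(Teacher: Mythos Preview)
The paper does not give its own proof of this theorem: it is quoted as a result of \cite{CSZ2015} and left without argument. So there is nothing in the present paper to compare your proof against line by line.

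Your argument is correct and is essentially the construction one extracts from \cite{CSZ2015}. The tensor-product realisation of the Lunardon--Polverino projection, the $\pi$-intertwining identification $\psi$ merging the two ambient $(k-1)$-spaces so that both subgeometries sit over a common center $\Gamma$, and the descent of a $\Gamma$-preserving collineation to $\tilde W/K\cong W$ are all sound; the appeal to the $\ZG$-rigidity of full-dimensional canonical subgeometries to force $\hat g(U^\sharp)=\lambda\,\psi(V^\sharp)$ is exactly the right lever. One small point worth noting, though it does not affect correctness: immediately after stating this theorem the paper sketches, in the rank-$n$ case on $\PG(1,q^n)$, the \emph{dual} picture --- one fixed subgeometry $\Sigma$ and two centers $\Gamma_1,\Gamma_2$ on different $\mathrm{Stab}(\Sigma)$-orbits --- by ``arguing as in the proof of \cite[Theorem~7]{CSZ2015}''. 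Your formulation (one center, two subgeometries) and that one are interchangeable via a collineation of $\Sigma^*$ swapping the roles of $\Gamma$ and $\Sigma$, and the cited source moves freely between them.
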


Now we interpret the classes of linear sets, hence we are going to consider $\F_q$-linear sets of rank $n$ of $\Lambda=\PG(1,q^n)=\PG(W,\F_{q^n})$, with maximum field of linearity $\F_q$. Arguing as in the proof of \cite[Theorem 7]{CSZ2015}, if $L_U$ is non-simple, then for any pair $U$, $V$ of $n$-dimensional $\F_q$-subspaces of $W$ with $L_U=L_V$ such that $U^f \neq V$ for each $f\in \Gamma \mathrm{L}(2,q^n)$ we can find a $q$-order subgeometry $\Sigma$ of $\Sigma^*=\PG(n-1,q^n)$ and two $(n-3)$-spaces $\Gamma_1$ and $\Gamma_2$ of $\Sigma^*$, disjoint from $\Sigma$ and from $\Lambda$, lying on different orbits of $Stab(\Sigma)$.
On the other hand, arguing as in \cite[Theorem 6]{CSZ2015}, if there exist two $(n-3)$-subspaces $\Gamma_1$ and $\Gamma_2$ of $\Sigma^*$, disjoint from $\Sigma$ and from $\Lambda$, belonging to different orbits of $Stab(\Sigma)$ and such that $L=p_{\Lambda,\, \Gamma_1}(\Sigma)=p_{\Lambda,\, \Gamma_2}(\Sigma)$, then it is possible to construct two $n$-dimensional $\F_q$-subspaces $U$ and $V$ of $W$ with $L_U=L_V$ such that $U^f \neq V$ for each $f\in \Gamma \mathrm{L}(2,q^n)$. Hence we can state the following.
\medskip

The $\Gamma \mathrm{L}$-class of $L_U$ is the number of orbits of $Stab(\Sigma)$ on $(n-3)$-spaces of $\Sigma^*$ containing a $\Gamma$ disjoint from $\Sigma$ and from $\Lambda$ such that $p_{\Lambda,\, \Gamma}(\Sigma)$ is equivalent to $L_U$.

\subsection{Class of linear sets and linear blocking sets of R\'edei type}

A \emph{blocking set} $\cB$ of $\PG(V,\F_{q^n})=\PG(2,q^n)$ is a point set meeting every line of the plane.
Blocking sets of size $q^n+N\leq 2q^n$ with an $N$-secant are called blocking sets of \emph{R\'edei type}, the $N$-secants of the blocking set are called \emph{R\'edei lines}. Let $L_U$ be an $\F_q$-linear set of rank $n$ of a line $\ell=\PG(W,\F_{q^n})$, $W\leq V$, and let ${\bf w}\in V \setminus W$.
Then $\la U, {\bf w} \ra_{\F_q}$ defines an $\F_q$-linear blocking set of $\PG(2,q^n)$ with R\'edei line $\ell$.
The following theorem tells us the number of inequivalent blocking sets obtained in this way.

\begin{theorem}
The $\G$-class of an $\F_q$-linear set $L_U$ of rank $n$ of $\PG(W,\F_{q^n})=\PG(1,q^n)$, with maximum field of linearity $\F_q$, is the number of inequivalent $\F_q$-linear blocking sets of R\'edei type of $\PG(V,\F_{q^n})=\PG(2,q^n)$ containing $L_U$. 
\end{theorem}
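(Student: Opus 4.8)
The plan is to set up a correspondence between the $\F_q$-subspaces $V$ of $W$ defining $L_U$ and certain $\F_q$-linear blocking sets of Rédei type containing $L_U$, and then to show this correspondence respects the appropriate equivalence relations so that the number of orbits on each side matches. Concretely, fix an embedding of $\ell=\PG(W,\F_{q^n})$ as a line in $\PG(V,\F_{q^n})=\PG(2,q^n)$ with $W\leq V$, and a vector ${\bf w}\in V\setminus W$. For each $\F_q$-subspace $V_i$ of $W$ of rank $n$ with $L_{V_i}=L_U$, the subspace $\langle V_i,{\bf w}\rangle_{\F_q}$ defines an $\F_q$-linear blocking set $\cB_i$ of Rédei type with Rédei line $\ell$, and $\cB_i\cap\ell=L_U$. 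First I would verify that every $\F_q$-linear blocking set of Rédei type of $\PG(2,q^n)$ containing $L_U$ (with $\ell$ as a Rédei line) arises this way from some such $V_i$: if $\cB=L_T$ for an $(n+?)$-dimensional $\F_q$-subspace $T\leq V$ with Rédei line $\ell$, then the Rédei property forces $\dim_q(T)=n$ and $T\cap W$ has rank $n-1$ or so — one must pin down the exact relationship, using that a Rédei-type linear blocking set of size $q^n+|L_U|$ meets $\ell$ in exactly $L_U$, to recover that $T$ is $\G\mathrm L$-equivalent to some $\langle V_i,{\bf w}\rangle_{\F_q}$.

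Next I would establish the two directions of the orbit count. For the forward direction: if $V_i$ and $V_j$ lie in the same $\G\mathrm L(2,q^n)$-orbit (say $V_j=V_i^f$), then $f$ extends to a semilinear map of $V$ fixing $\ell$ and sending ${\bf w}$ to a scalar multiple of ${\bf w}$ modulo $W$, hence $\langle V_i,{\bf w}\rangle_{\F_q}$ and $\langle V_j,{\bf w}\rangle_{\F_q}$ define equivalent blocking sets; here one uses Lemma \ref{lem3} (or the ${\bf x}\mapsto\lambda{\bf x}$ action) to absorb the scalar ambiguity in the choice of ${\bf w}$, and the fact that changing ${\bf w}$ to ${\bf w}+{\bf u}$ for ${\bf u}\in W$ or to $\mu{\bf w}$ does not change the projective blocking set. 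For the converse: if $\cB_i$ and $\cB_j$ are equivalent via a collineation $\psi$ of $\PG(2,q^n)$, then $\psi$ must send a Rédei line of $\cB_i$ to a Rédei line of $\cB_j$; since $\ell$ is the unique Rédei line through $L_U$ in the relevant cases (or, if not, one argues orbit-wise), $\psi$ can be adjusted to fix $\ell$ setwise, inducing a collineation of $\ell$ carrying $L_U$ to itself and $L_{V_i}$-data to $L_{V_j}$-data, which by Proposition \ref{rankisgood} (uniqueness of rank) and a lifting argument produces an $f\in\G\mathrm L(2,q^n)$ with $V_i^f=V_j$.

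The main obstacle I anticipate is the handling of Rédei lines that are not equal to $\ell$: a linear blocking set of Rédei type can have more than one Rédei line, and an equivalence $\psi$ between $\cB_i$ and $\cB_j$ need not respect $\ell$. The clean way around this is to note that any two Rédei lines of a given $\F_q$-linear blocking set $\cB=L_T$ of rank $n$ are related by a collineation stabilizing $\cB$ (this is essentially built into the linear structure: the stabilizer of $T$ in $\G\mathrm L(3,q^n)$ acts on the set of Rédei lines), so one may always normalize $\psi$ so that $\ell^\psi=\ell$ without changing the equivalence class; alternatively, one restricts attention to the pair (blocking set, chosen Rédei line) and checks that forgetting the Rédei line does not merge orbits. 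A secondary technical point is the lifting step — showing that a collineation of $\ell$ fixing $L_U$ and relating $V_i$ to $V_j$ genuinely comes from an element of $\G\mathrm L(2,q^n)$ acting on the $\F_q$-subspaces — but this is routine once the scalar and field-automorphism bookkeeping is done carefully, exactly as in the proofs of Propositions \ref{invar0} and \ref{invar}. Assembling these, the number of $\G\mathrm L(2,q^n)$-orbits on the $V_i$, which by Definition \ref{GL-class} is the $\G$-class of $L_U$, equals the number of equivalence classes of $\F_q$-linear blocking sets of Rédei type of $\PG(2,q^n)$ containing $L_U$.
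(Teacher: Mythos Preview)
Your overall architecture is right, and the forward direction (extending $f\in\G(2,q^n)$ to the plane) is exactly what the paper does. But there are two genuine gaps in the converse direction.

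\medskip

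\textbf{The lifting step is not what you think it is.} Suppose $\psi=\varphi_g$ fixes $\ell$ and sends $\cB_i$ to $\cB_j$. You focus on $\psi|_\ell$, but on $\ell$ both blocking sets cut out the \emph{same} point set $L_U$, so $\psi|_\ell$ carries no information distinguishing $V_i$ from $V_j$; there is nothing to ``lift''. The content lies in the affine part: one must show that $L_{T_i^g}=L_{T_j}$ as linear sets of the plane forces $T_i^g=\lambda T_j$ for some $\lambda\in\F_{q^n}^*$, whence $V_i^{g|_W}=\lambda V_j$. This is exactly the statement that rank-$(n+1)$ $\F_q$-linear sets of $\PG(2,q^n)$ admitting $(q+1)$-secants are simple, and the paper imports it as \cite[Proposition~2.3]{BoPo2005}. (One can also prove it directly here by comparing the affine point sets $\{\la \mathbf{v}+\mathbf{w}\ra:\mathbf{v}\in V_i\}$ and $\{\la \mathbf{v}'+\mathbf{w}\ra:\mathbf{v}'\in V_j\}$, using that distinct vectors in $W$ give distinct projective points off $\ell$; but you have not isolated this as the crux, and Proposition~\ref{rankisgood} plays no role.)

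\medskip

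\textbf{The multiple R\'edei line case needs a structural input.} Your proposed fix---that the stabiliser of $\cB$ acts transitively on its R\'edei lines---is unproven and essentially as hard as the classification you are trying to avoid. The paper instead invokes \cite[Theorem~5]{LuPo2000}: an $\F_q$-linear blocking set with more than one R\'edei line is equivalent to the one defined by $\Tr_{q^n/q^m}$ for some $m\mid n$. Since the maximum field of linearity is $\F_q$, this forces $m=1$, so $L_U$ is equivalent to the trace linear set. Then Theorem~\ref{THMtrace} gives $\G$-class one, and there is a unique blocking set up to equivalence---matching the claim. Without this classification (or an equivalent argument), your normalisation of $\psi$ to fix $\ell$ cannot be justified.
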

\begin{proof}
$\F_q$-linear blocking sets of $\PG(2,q^n)$ with more than one R\'edei line are equivalent to those defined by $\Tr_{q^n/q^m}(x)$ for some divisor $m$ of $n$, see \cite[Theorem 5]{LuPo2000}. Suppose first that $L_U$ is equivalent to $L_T$, where $T=\{(x,\Tr_{q^n/q}(x))\colon x\in \F_{q^n}\}$.
According to Theorem \ref{THMtrace} $L_T$, and hence also $L_U$, have $\ZG$-class and $\G$-class one. Proposition \ref{invar0} yields the existence of a unique point $P\in L_U$ such that $w_{L_U}(P)=n-1$. Then for each ${\bf v} \in V \setminus W$ the $\F_q$-linear blocking set defined by $\la U, {\bf v}\ra_{\F_q}$ has more than one R\'edei line, each of them incident with $P$, and hence it is equivalent to the R\'edei type blocking set obtained from $\Tr_{q^n/q}(x)$.

Now let $\cB_1=L_{V_1}$ and $\cB_2=L_{V_2}$ be two $\F_q$-linear blocking sets of R\'edei type with $\PG(W,\F_{q^n})$ the unique R\'edei line. Denote by $U_1$ and $U_2$ the $\F_q$-subspaces $W\cap V_1$ and $W\cap V_2$, respectively, and suppose $L_{U_1}=L_{U_2}$ with $\F_q$ the maximum field of linearity. Then $\cB_1$ and $\cB_2$ have $(q+1)$-secants and we have $V_1=U_1 \oplus \la {\bf u_1} \ra_{\F_q}$ and $V_2=U_2 \oplus \la {\bf u_2} \ra_{\F_q}$ for some ${\bf u_1}, {\bf u_2} \in V\setminus W$.

If $\cB_1^{\varphi_f}=\cB_2$, then \cite[Proposition 2.3]{BoPo2005} implies $V_1^f=\lambda V_2$ for some $\lambda \in \F_{q^n}^*$. Such $f\in \Gamma \mathrm{L}(3,q^n)$ has to fix $W$ and it is easy to see that $U_1^f=\lambda U_2$, i.e. $U_1$ and $U_2$ are $\Gamma \mathrm{L}(2,q^n)$-equivalent.

Conversely, if there exists $f\in \Gamma\mathrm{L}(W,\F_{q^n})$ such that $U_1^f=U_2$, then $\cB_1^{\varphi_g}=\cB_2$, where $g\in \Gamma\mathrm{L}(V,\F_{q^n})$ is the extension of $f$ mapping ${\bf u_1}$ to ${\bf u_2}$.
\end{proof}

\subsection{Class of linear sets and MRD-codes}

In \cite[Section 4]{Sh} Sheekey showed that maximum scattered linear sets of $\PG(1,q^n)$ correspond to $\F_q$-linear maximum rank distance codes (MRD-codes) of dimension $2n$ and minimum distance $n-1$, that is, a set $\cM$ of $q^{2n}$ $n\times n$ matrices over $\F_q$ forming an $\F_q$-subspace of $\F_q^{n\times n}$ of dimension $2n$ such that the non-zero matrices of $\cM$ have rank at least $n-1$.
For definitions and properties on MRD-codes we refer the reader to \cite{Delsarte} by Delsarte and \cite{Gabidulin} by Gabidulin.
For $n \times n$ matrices there are two different definitions of equivalence for MRD-codes in the literature.
The arguments of \cite[Section 4]{Sh} yield the following interpretation of the $\G$-class:

\begin{itemize}
\item $\cM$ and $\cM'$ are equivalent if there are invertible matrices $A$, $B\in\F_{q}^{n\times n}$ and a field automorphism $\sigma$ of $\F_q$ such that $A \cM^\sigma B = \cM'$, see \cite{Sh}. In this case the $\G$-class of $L_U$ is the number of inequivalent MRD-codes obtained from the linear set $L_U$.

\item $\cM$ and $\cM'$ are equivalent if there are invertible matrices $A$, $B\in\F_{q}^{n\times n}$ and a field automorphism $\sigma$ of $\F_q$  such that $A \cM^\sigma B = \cM'$, or
$A \cM^{T \sigma} B = \cM'$, see \cite{CKM2015}. In this case the number of inequivalent MRD-codes obtained from the linear set $L_U$ is between $\lceil s/2 \rceil$ and $s$, where $s$ is the $\G$-class of $L_U$.
\end{itemize}

We summarize here the known non-equivalent families of MRD-codes arising from maximum scattered linear sets.

\begin{enumerate}
\item $L_{U_1}:= \{\la (x,x^q)\ra_{\F_{q^n}} \colon x\in \F_{q^n}^*\}$ (found by Blokhuis and Lavrauw \cite{BL2000}) gives Gabidulin codes,
\item $L_{U_2}:= \{\la (x,x^{q^s})\ra_{\F_{q^n}} \colon x\in \F_{q^n}^*\}$, $\gcd(s,n)=1$ (\cite{BL2000}) gives generalized Gabidulin codes,
\item $L_{U_3}:= \{\la (x,\delta x^q + x^{q^{n-1}})\ra_{\F_{q^n}} \colon x\in \F_{q^n}^*\}$ (found by Lunardon and Polverino \cite{LP2001}) gives MRD-codes found by Sheekey,
\item $L_{U_4}:= \{\la (x,\delta x^{q^s} + x^{q^{n-s}})\ra_{\F_{q^n}} \colon x\in \F_{q^n}^*\}$, $\N(\delta)\neq 1$, $\gcd(s,n)=1$ gives MRD-codes found by Lunardon, Trombetti and Zhou in \cite{LTZ}.
\end{enumerate}

\begin{remark}
\label{nonsimpleex}
The linear sets $L_{U_1}$ and $L_{U_2}$ coincide, but when $s\notin\{1,n-1\}$, then there is no $f\in \Gamma \mathrm{L}(2,q^n)$ such that
$U_1^f=U_2$. These linear sets are of \emph{pseudoregulus type}, \cite{LuMaPoTr2014} (see also Example \ref{pseudoZG}), and in \cite{CSZ2015} it was proved that the $\G$-class of these linear sets is $\varphi(n)/2$, hence they are examples of non-simple linear sets for $n=5$ and $n>6$.
\end{remark}

It can be proved that the family $L_{U_4}$ contains linear sets non-equivalent to those from the other families. We will report on this elsewhere.

%
%
%
%

\bigskip

\noindent Bence Csajb\'ok, Giuseppe Marino and Olga Polverino\\
Dipartimento di Matematica e Fisica,\\
 Seconda Universit\`a degli Studi
di Napoli,\\
I--\,81100 Caserta, Italy\\
{\em csajbok.bence@gmail.com}, {\em giuseppe.marino@unina2.it}, {\em olga.polverino@unina2.it}

\end{document}